\newtheorem{theorem}{Theorem}[section]
\newtheorem{lemma}{Lemma}[section]
\newtheorem{proposition}{Proposition}[section]
\newtheorem{corollary}{Corollary}[section]
\newtheorem{definition}{Definition}[section]
\newtheorem*{remark}{Remark}
\DeclareMathOperator{\intS}{int}
\DeclareMathOperator*{\argmin}{arg\,min}
\DeclareMathOperator{\bdry}{bdry}
\renewcommand*\env@matrix[1][\arraystretch]{%
  \edef\arraystretch{#1}%
  \hskip -\arraycolsep
  \let\@ifnextchar\new@ifnextchar
  \array{*\c@MaxMatrixCols c}}
\providecommand{\Lmat}{L\text{-matrix}}
\newcommand{\DUAL}[1]{{#1}^{D}}
\providecommand{\R}{\mathbb{R}}
\newcommand{\MYK}[1]{#1^{k}}
\newcommand{\MYKp}[1]{#1^{k+1}}
\newcommand{\NC}[2]{N_{#1}#2}
\newcommand{\reds}[1]{{{\tilde{#1}}}}
\begin{document}

\newcommand{\im}{\operatorname{im}\,}
\newcommand{\ri}{\operatorname{ri}\,}
\newcommand{\itr}{\operatorname{int}}
\newcommand{\pspace}{\operatorname{par}\,}
\newcommand{\rec}{\operatorname{rec}\,}
\newcommand{\lin}{\operatorname{lin}\,}
\newcommand{\rank}{\operatorname{rank}}
\newcommand{\sspan}{\operatorname{span}\,}
\newcommand{\aff}{\operatorname{aff}\,}
\newcommand{\tr}{T}

\renewcommand{\algorithmicrequire}{\textbf{Input:}}
\renewcommand{\algorithmicensure}{\textbf{Output:}}
\algrenewcommand\Return{\State \algorithmicreturn{} }

\iffalse
\def\PATHAVI{\textsc{PathAVI}}
\def\PATH{\textsc{Path}}
\def\AVI{\textsc{AVI}}
\def\VI{\textsc{VI}}
\def\MCP{\textsc{MCP}}
\def\LCP{\textsc{LCP}}
\def\GE{\textsc{GE}}
\def\PL{\textsc{PL}}
\def\QR{\textsc{QR}}
\def\NNF{\textsc{NNF}}
\else
\def\PATHAVI{\textsc{PathAVI}}
\def\PATH{\textsc{Path}}
\def\AVI{\text{AVI}}
\def\VI{\text{VI}}
\def\MCP{\text{MCP}}
\def\LCP{\text{LCP}}
\def\GE{\text{GE}}
\def\PL{\text{PL}}
\def\QR{\text{QR}}
\def\NNF{\text{NNF}}
\fi

\title{A Structure-Preserving Pivotal Method\\ for Affine Variational Inequalities
}


\author{Youngdae~Kim
  \thanks{Wisconsin Institute for Discovery and Department of Computer
    Sciences, University of Wisconsin-Madison, 1210 West Dayton St., Madison,
    WI, 53706 \newline\newline
    Y. Kim       \newline Email: youngdae@cs.wisc.edu \newline
    O. Huber     \newline Email: ohuber2@wisc.edu \newline
    M. C. Ferris \newline Email: ferris@cs.wisc.edu}
  \and Olivier~Huber \footnotemark[1]
  \and Michael~C.~Ferris \footnotemark[1]\\
}



\maketitle

\begin{abstract}
Affine variational inequalities (AVI) are an important problem class that
generalize systems of linear equations, linear complementarity problems and
optimality conditions for quadratic programs. 
This paper describes \PATHAVI{}, a structure-preserving pivotal approach, that
can process (solve or determine infeasible) large-scale sparse instances of
the problem efficiently, with theoretical guarantees and at high accuracy.
\PATHAVI{} implements a strategy that is known to process models with good
theoretical properties without reducing the problem to specialized forms,
since such reductions may destroy structure in the models and can lead to
very long computational times. We demonstrate formally that \PATHAVI{}
implicitly follows the theoretically sound iteration paths, and 
can be implemented in a large scale setting using existing
sparse linear algebra and linear programming techniques without employing a
reduction. We also extend the class of problems that \PATHAVI{} can process.
The paper demonstrates the effectiveness of our approach by comparison to the \PATH{} solver used on a complementarity
reformulation of the AVI in the context of applications in friction contact
and Nash Equilibria problems. \PATHAVI{} is a general
purpose solver, and freely available under the same conditions as \PATH{}.

\end{abstract}
\newpage
\section{Introduction}
\label{sec:intro}

In this paper, we present \PATHAVI{}, a structure-preserving pivotal
method for affine variational inequalities (\AVI{}s) in $\mathbb{R}^n$.
An $\AVI(C,q,M)$ is defined as follows: given a polyhedral convex set $C$,
find $z \in C$ such that
\begin{equation}
\left\langle Mz + q, y - z\right\rangle \ge 0, \quad \forall y \in C, \tag{AVI}
\label{eq:avi}
\end{equation}
where $M\in\R^{n\times n}$, $q\in\R^{n}$ and $\langle \cdot, \cdot\rangle$ is the
usual Euclidean inner product. 
An \AVI{} is a linear generalized equations \cite{robinson79}
and we refer to~\cite{facchinei03} for results on existence, uniqueness, and stability
theory for such systems.

\PATHAVI{} tries to solve an $\AVI(C,q,M)$ by computing a zero of the normal
map~\cite{robinson92} associated with the \AVI{}. The normal map
$M_C:\mathbb{R}^n \rightarrow \mathbb{R}^n$ is defined as follows:
\begin{equation*}
M_C(x) \coloneqq M(\pi_C(x)) + q + x - \pi_C(x),
\tag{normal map}
\end{equation*}
with $\pi_C(\cdot)$ denoting the Euclidean projector onto the set $C$. One can
easily see that $M_C(x^*)=0$ if and only if $z^*=\pi_C(x^*)$ with
$x^*=z^*-(Mz^*+q)$ is a solution to the $\AVI(C,q,M)$. 
To compute a zero of $M_C(x)$, our method employs the complementary pivoting
method \cite{eaves76,lemke65} with a ray start: the piecewise-linear (\PL{}) map
$G_C: \mathbb{R}^n \times \mathbb{R}_+ \rightarrow \mathbb{R}^n$ is defined as
\begin{equation}
 G_C(x,t)\coloneqq M_C(x)-tr,
\label{eq:gc}
\end{equation}
with $r \in \mathbb{R}^n$ denoting the covering vector and $t$ the auxiliary
variable.
A path defined as $G_C^{-1}(0)$ is followed through complementary
pivoting. The algorithm terminates when either $t$ becomes zero (a solution to
the \AVI{} is found) or a secondary ray is generated. Under some additional
assumptions this latter outcome can be interpreted in terms of feasibility of
the \AVI{}.

This approach has been previously investigated in~\cite{cao96}. However
the method in~\cite{cao96} requires a reduction transforming the given 
$\AVI(C,q,M)$ to a reduced $\AVI(\tilde{C},\tilde{q},\tilde{M})$
to eliminate lines in $C$ and solves the reduced \AVI{}. The matrix
$\tilde{M}$ is constructed from a Schur complement computation and the 
polyhedral constraints defining $\tilde{C}$ are computed by multiplying
with orthonormal matrices. Thus the original structure in $C$ and $M$ may be
lost:
in particular if the \AVI{} is sparse, there is no guarantee that
the resulting reduced \AVI{} would enjoy the same property.
We provide an instance where this happens in Section~\ref{subsec:exp-preserve}.
In sharp contrast, \PATHAVI{} does not require any reduction at all. 
Therefore our method is able to take advantage of a sparse structure, whereas
the method in~\cite{cao96} often needs to perform dense linear algebra
computations.

The main challenge in tackling the problem in its original space lies in the
starting phase. For good theoretical properties, a ray start is required, and it is
well-defined at an extreme point. However, when $C$ contains lines there
is no extreme point.
To perform a ray start in that case, we need to find an \emph{implicit extreme point}, 
which generalizes the notion of an extreme point when the underlying feasible
region contains lines.
Roughly speaking, if we project an implicit extreme point of $C$ on the subset
where all lines are removed, we find an extreme point.
We show that there is an implicit extreme point satisfying the sufficient
conditions for a ray start.
We explain how the phase 1 of the simplex method can be used to find such a
point.

Regarding processability, we show that \PATHAVI{} can process an
\AVI{}$(C, q, M)$ whenever $M$ is an $L$-matrix with respect to the recession
cone of $C$ \cite[Definition 4.2]{cao96}.
We also exhibit two new classes of \AVI{} where \PATHAVI{} finds a solution.
The first one stems from the study of friction contact problems from an \AVI{} perspective,
and the second one can be seen as a generalization of a known existence result
for LCP for copositive matrices.
In contrast with the previous results, the conditions are on both $M$ and $q$.

One of the practical and most widely used method for solving an~\AVI{} has
been to use the \PATH{} solver~\cite{dirkse95}, which is one of the most
robust and efficient solvers for mixed complementarity problems (\MCP{}s).
It is well known~\cite{dirkse97,facchinei03} that an \AVI{} can be
reformulated as a linear \MCP{}, and \PATH{} uses this approach
when it solves an \AVI{}. However the \MCP{} reformulation does not exploit
the polyhedral structure of the set $C$, in that complementary pivoting
of \PATH{} is done over a different \PL{}-manifold from \PATHAVI{}'s.
We compare theoretical properties of the two formulations,
and present computational results comparing performance of the solvers.

This paper is organized as follows: in Section~\ref{sec:background}, we
briefly describe  how one uses the complementary pivoting method on
a \PL{}-manifold to compute a zero of the normal map associated with a given
\AVI{}. Section~\ref{sec:theory} presents our main theoretical results:
firstly, we discuss sufficient conditions for a ray start,
we define an implicit extreme point, and prove the existence of an implicit
extreme point satisfying the conditions for a ray start.
Secondly, we show that \PATHAVI{} can process $L$-matrices
and we show new types of \AVI{}s processable by \PATHAVI{}.
In Section~\ref{sec:computing}, we present the computational procedure to
start \PATHAVI{}. Section~\ref{sec:worst-analysis} introduces the \MCP{}
reformulation of the \AVI{} and analyzes worst-case performance of the
two formulations.
Finally we present computational results in Section~\ref{sec:exp}, and
Section~\ref{sec:conclusion} concludes this paper.

A word about our notation is in order. Let $S$ be a convex set in
$\mathbb{R}^n$. The lineality space of $S$ is denoted by $\lin S$.
The symbol $\ri S$ denotes the relative interior of $S$.
The affine hull of $S$ is denoted by $\aff S$. By $\pspace S$, we mean
the subspace parallel to $\aff S$ such that $\aff S = s + \pspace S$ for each
$s \in S$. When ordered index sets are used as subscripts on a matrix,
they define a submatrix: for ordered index sets
$\alpha \subset \{1,\dots,m\}$ and
$\beta \subset \{1,\dots,n\}$ $M_{\alpha \beta}$ denotes a submatrix of $M$
consisting of rows and columns of $M$ in the order of
$\alpha$ and $\beta$, respectively.
When matrices are used as subscripts on a matrix, they define another matrix:
for matrices $Q$ and $\bar{Q}$ having appropriate dimensions $M_{Q\bar{Q}}$
denotes $Q^TM\bar{Q}$. For an $\AVI(C,q,M)$, the set $C$ is assumed to be the
set $\{z \in \mathbb{R}^n \mid Az - b \in K, l\le z \le u\}$
with $l_j,u_j \in \mathbb{R} \cup \{-\infty,\infty\}, b_i \in \mathbb{R}$
and $A_{i\bullet} \neq 0$ for $i=1,\dots,m$ and $j=1,\dots,n$, where
the set $K$ is a Cartesian product of $\mathbb{R}_+$, $\{0\}$, or
$\mathbb{R}_-$ to accommodate constraints of the form $\geq$, $=$, or $\leq$,
respectively. For a closed convex cone $K$, the dual cone of $K$ is denoted
by $K^D := \{y \mid \langle y,k \rangle \ge 0, \forall k \in K\}$. 
For the rest of this paper, $Q$ and $\bar{Q}$ denote orthonormal basis
matrices for the lineality space of $C$ and its orthogonal complement, 
respectively. 

\section{Background}\label{sec:background}

In this section, we briefly describe how to compute a zero of the normal map
associated with a given $\AVI(C,q,M)$ using the complementary pivoting method
with a ray start. 
We also introduce some concepts related to processability of \AVI{}s.
Refer to~\cite{cao96,eaves76,lemke65,robinson92} for more details.

The basic procedure of the complementary pivoting method to compute a zero of
the normal map associated with an $\AVI(C,q,M)$ is as follows:
i) compute an initial solution $(x^0,t^0)$ such that $G_C(x^0,t^0)=0$,
and the point $(x^0,t^0)$ lies on a ray, called a starting ray, 
consisting of points $(x(t),t)$
with $G_C(x(t),t)=0$ and $\pi_C(x(t))=\pi_C(x^0)$ for all $t \ge t^0$;
then ii) starting from $(x^0,t^0)$ follow a path 
$G^{-1}(0)=\{(x,t) \in \mathbb{R}^n \times \mathbb{R}_+\mid G(x,t)=0\}$
using the complementary pivoting method until $t$ becomes zero or a
secondary ray is generated. As we will see, \PATHAVI{} generates a
starting ray at an implicit extreme point of $C$, i.e., $\pi_C(x^0)$
is an implicit extreme point.

Computationally, finding an initial solution $(x^0,t^0)$ amounts to
computing a complementary basic solution having $z=\pi_C(x^0)$
for the following system of equations:
\begin{equation}
  \begin{aligned}
    Mz + q - A^\tr \lambda - w + v &&=&& 0,\\
    Az - b &&=&& s,
  \end{aligned}
  \label{eq:comp-eq}
\end{equation}
with complementarity between variables
\begin{equation}
  \begin{aligned}
    K \ni s && \perp &&& \lambda \in K^D,\\
    0 \le z - l && \perp &&& w \ge 0,\\
    0 \le u - z && \perp &&& v \ge 0.
  \end{aligned}
  \label{eq:comp-perp}
\end{equation}
The complementary basic solution satisfies the sufficient conditions 
for a ray start as defined in Section~\ref{sec:theory}.
Then by adding $-tr$ with $r \in \ri(N_C(\pi_C(x^0)))$ to the first equation in
\eqref{eq:comp-eq} and pivoting in the $t$ variable, 
we generate an almost complementary feasible basis and start complementary
pivoting.

Geometrically, the map $G_C(x,t)$ is defined over a \PL{}$(n+1)$-manifold
$\mathcal{M}_C$, where definition of a manifold follows from 
\cite[Section 4]{eaves76}. The manifold $\mathcal{M}_C$ consists of
a pair $\left(\mathbb{R}^n\times\mathbb{R}_+,
\{\sigma_i \times \mathbb{R}_+ \mid i \in \mathcal{I}\}\right)$ such that
each $\sigma_i$ is a set formed by $\sigma_i = F_i + N_{F_i}$, where
$F_i$ is from a collection of the nonempty faces 
$\{F_i \mid i \in \mathcal{I}\}$ of $C$, and
$N_{F_i}$ is a normal cone having constant value on $\ri F_i$.
The manifold $\mathcal{M}_C$ is constructed from the normal manifold
$\mathcal{N}_C$ consisting of a pair
$\left(\mathbb{R}^n,\{\sigma_i\mid i \in \mathcal{I}\}\right)$ by doing
a Cartesian product each $\sigma_i$ with $\mathbb{R}_+$. Note that the
collection of the sets $\{\sigma_i \mid i \in \mathcal{I}\}$ is a subdivision of
$\mathbb{R}^n$. Consequently,
$\{\sigma_i\times\mathbb{R}_+ \mid i \in \mathcal{I}\}$ is a subdivision
of $\mathbb{R}^n \times \mathbb{R}_+$.
The $k$-dimensional faces of the $\sigma_i \times \mathbb{R}_+$ are called
the $k$-cells of $\mathcal{M}_C$. Similarly, the $k$-dimensional faces of
the $\sigma_i$ are called the $k$-cells of $\mathcal{N}_C$.
The map $G_C$ coincides with some affine transformation on each $(n+1)$-cell
$\sigma_i \times \mathbb{R}_+$ as the normal map $M_C$ does on each 
$n$-cell $\sigma_i$ \cite[Proposition 2.5]{robinson92}. 
Note that the starting ray $(x(t),t)$ for $t \ge t^0$
lies interior to some $(n+1)$-cell $\sigma_i \times \mathbb{R}_+$ of 
$\mathcal{M}_C$, where $x^0$ is a regular point, i.e.,
$\dim(G_C(\sigma_i \times \mathbb{R}_+))=n$.
Under lexicographic pivoting, each complementary pivoting generates each
piece of the 1-manifold $G^{-1}(0)$ such that it starts from a boundary
of a $(n+1)$-cell of $\mathcal{M}_C$ (except for the first piece containing
the starting ray) and passes through interior to that
cell until it reaches the cell's another boundary. If it does not reach
a boundary, then we say that a secondary ray is generated.
The set of $(n+1)$-cells the
1-manifold passes through never repeats. As there is a finite number of
$(n+1)$-cells of $\mathcal{M}_C$, we have either $t$ reaches zero
(equivalently we find a solution to the $\AVI(C,q,M)$) or a secondary ray is
generated.

Processability is tied to the conditions under which a secondary ray occurs.
As with the LCPs, the answer to this question involves specific
matrix classes that we now define.

%
\begin{definition}[Definition 4.1 \cite{cao96}]
Let $K$ be a closed convex cone. A matrix $M$ is said to be \emph{copositive} with respect to $K$ if $\langle x, Mx\rangle \geq 0$ for all $x\in K$.
 If furthermore it holds that for all $x\in K$ $\langle x, Mx\rangle = $ implies $(M + M^{\tr})x = 0$, then $M$ is \emph{copositive-plus}.
\end{definition}
\begin{definition}
 Let $K$ be a closed convex cone. A matrix $M$ is said to be \emph{semimonotone} with respect to $K$ if
 for every $q\in\ri(\DUAL{K})$, the solution set of the generalized complementarity problem
    \begin{equation}
      z\in K,\qquad Mz + q\in\DUAL{K}, \qquad z^\tr(Mz + q) = 0 \label{eq:Msemimonotone}
    \end{equation}
      is contained in $\lin K$.
\end{definition}
\begin{remark}
 This definition is consistent with the existing semimonotone property in the \LCP{}, as given in~\cite[Definition~3.9.1]{cottle2009linear}.
 In this case $K=\R^n_+$ and $\lin K = \{0\}$. Then the condition~\eqref{eq:Msemimonotone} is equivalent to $0$ being the solution set of \LCP{}$(M,q)$
for all $q > 0$, which by Theorem~3.9.3 in~\cite{cottle2009linear} is equivalent to the standard definition of $M$ semimonotone.
\end{remark}
\begin{definition}[Definition 4.2 \cite{cao96}]\label{def:Lmat}
Let $K$ be a closed convex cone. A matrix $M$ is said to be an $\Lmat$ with respect to $K$ if both
\begin{enumerate}
 \item[(a)] $M$ is semimonotone with respect to $K$
    \item[(b)] For any $z\neq 0$ satisfying
      \begin{equation}
        z\in K, \qquad Mz\in\DUAL{K}, \qquad z^\tr Mz = 0,
      \end{equation}
      there exists $z'\neq 0$ such that $z'$ is contained in every face of $K$ containing $z$ and $-M^\tr z'$ is contained in every face of $\DUAL{K}$ containing $Mz$.
\end{enumerate}
\end{definition}
\begin{lemma}[Lemma 4.5~\cite{cao96}]
  If a matrix $M$ is copositive-plus with respect to a closed convex cone $K$, then it is an $\Lmat$ with respect to $K$.
\end{lemma}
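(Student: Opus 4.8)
The plan is to verify the two defining conditions (a) and (b) of Definition~\ref{def:Lmat} directly, using only copositivity for (a) and the ``plus'' property for (b).

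\emph{Part (a), semimonotonicity.} Fix $q\in\ri(\DUAL{K})$ and let $z$ be an arbitrary solution of the generalized complementarity problem $z\in K$, $Mz+q\in\DUAL{K}$, $z^\tr(Mz+q)=0$. Expanding the orthogonality condition gives $z^\tr Mz + z^\tr q = 0$. Since $z\in K$ and $q\in\DUAL{K}$ we have $z^\tr q\ge 0$, and since $M$ is copositive with respect to $K$ we have $z^\tr Mz\ge 0$; hence both terms vanish, in particular $\langle z,q\rangle = 0$. The crux is then to upgrade ``$z$ is orthogonal to one relative-interior point of $\DUAL{K}$'' to ``$z$ is orthogonal to all of $\DUAL{K}$'': because $z\in K = \DUAL{(\DUAL{K})}$ (bipolar theorem for closed convex cones), the linear functional $q'\mapsto\langle z,q'\rangle$ is nonnegative on $\DUAL{K}$ and vanishes at $q\in\ri(\DUAL{K})$, so by the standard relative-interior argument (given $q'\in\DUAL{K}$ pick $\delta>0$ with $(1+\delta)q-\delta q'\in\DUAL{K}$ and evaluate) it vanishes on all of $\DUAL{K}$. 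Consequently $\langle -z,q'\rangle\ge 0$ for every $q'\in\DUAL{K}$, i.e. $-z\in\DUAL{(\DUAL{K})}=K$, so $z\in K\cap(-K)=\lin K$, which is exactly what semimonotonicity demands.

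\emph{Part (b).} Take $z\neq 0$ with $z\in K$, $Mz\in\DUAL{K}$, $z^\tr Mz=0$, and simply set $z'=z$. Since $M$ is copositive-plus and $z^\tr Mz=0$ with $z\in K$, we get $(M+M^\tr)z=0$, equivalently $-M^\tr z = Mz$. Then $z'=z$ is trivially contained in every face of $K$ that contains $z$, and $-M^\tr z' = Mz$ is trivially contained in every face of $\DUAL{K}$ that contains $Mz$; and $z'=z\neq 0$. This establishes (b), so $M$ is an $\Lmat$ with respect to $K$.

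The only step requiring any care is the relative-interior argument in part (a); everything else is immediate once the sign conditions are exploited. I would be careful to invoke the bipolar identity $\DUAL{(\DUAL{K})}=K$ and the elementary fact $\lin K = K\cap(-K)$ so that the deduction $z\in\lin K$ is airtight, and to note that it is precisely the ``plus'' property $(M+M^\tr)z=0$ that makes the choice $z'=z$ succeed in part (b).
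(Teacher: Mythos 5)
The paper does not actually prove this lemma: it is stated as a citation to Lemma~4.5 of Cao and Ferris~\cite{cao96}, so there is no in-paper argument to compare against. Your proof, however, is correct and follows the standard route one would expect to find in that reference: in part~(a) you use copositivity to split $z^\tr Mz + z^\tr q = 0$ into two vanishing nonnegative pieces, then combine the bipolar identity $\DUAL{(\DUAL{K})}=K$ with the relative-interior prolongation argument to force $z\perp\DUAL{K}$, whence $z\in K\cap(-K)=\lin K$; in part~(b) the plus property gives $(M+M^\tr)z=0$, so $-M^\tr z = Mz$, and the choice $z'=z$ satisfies both face conditions trivially. No gaps.
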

The main existing result on the processability using a pivotal method is the following.
\begin{theorem}[Theorem 4.4~\cite{cao96}]
Suppose that $C$ is a polyhedral convex set, and $M$ is an $L$-matrix with
respect to $\rec C$ which is invertible on the lineality space of $C$. Then
exactly one of the following occurs:
\begin{itemize}
\item[$\bullet$] The method of \cite{cao96} solves the $\AVI(C,q,M)$.
\item[$\bullet$] The following system has no solution
  \begin{equation*}
    Mz + q \in (\rec C)^D.
  \end{equation*}
\end{itemize}
\label{thm:cao}
\end{theorem}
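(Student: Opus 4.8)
\noindent\textit{Proof strategy.}

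The statement is a dichotomy, so I would prove two things: that the alternatives are mutually exclusive, and that at least one of them always holds. Exclusivity is immediate. If $z^*$ solves $\AVI(C,q,M)$, then $-(Mz^*+q)\in N_C(z^*)$, and since $z+\R_+ d\subseteq C$ for every $z\in C$ and $d\in\rec C$, every element of $N_C(z^*)$ is nonpositive on $\rec C$; thus $N_C(z^*)\subseteq-(\rec C)^D$, so $Mz^*+q\in(\rec C)^D$ and the system in the second bullet is solvable. Hence the two alternatives cannot coexist, and it remains to show that one of them holds.

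For that, I would run the pivotal scheme of Section~\ref{sec:background}. A valid ray start is available: in \cite{cao96} it comes from the Schur-complement reduction of $\AVI(C,q,M)$ to a line-free $\AVI(\tilde C,\tilde q,\tilde M)$ — legitimate precisely because $M$ is invertible on $\lin C$ — together with a covering vector $r\in\ri N_C(\pi_C(x^0))$ taken at (the reduced image of) an extreme point. Lexicographic complementary pivoting then traces $G_C^{-1}(0)$ through a non-repeating, hence finite, sequence of $(n+1)$-cells of $\mathcal M_C$, so it stops either at $t=0$ — yielding $x^*$ with $M_C(x^*)=0$ and the AVI solution $z^*=\pi_C(x^*)$ (first alternative) — or on a secondary ray. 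So everything reduces to the implication: \emph{a secondary ray can occur only if $Mz+q\in(\rec C)^D$ has no solution}; together with the previous paragraph this gives ``exactly one''.

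To prove that implication I would analyse the secondary ray. Write it as $\{(x^0+s\xi,\,t^0+s\eta):s\ge0\}$ inside a cell $\sigma\times\R_+$ with $\sigma=F+N_F$. Since $\pi_C$ is affine on $\sigma$ with range in $F$, one gets $\pi_C(x^0+s\xi)=z^0+s\zeta$ with $\zeta\in\rec F\subseteq\rec C=:K$, $\xi-\zeta\in N_F\subseteq-K^D$, and $\langle\zeta,\xi-\zeta\rangle=0$ (because $\zeta\in\pspace F\perp N_F$); matching first-order terms in $G_C\equiv0$ gives $M\zeta+(\xi-\zeta)=\eta r$, i.e.\ $\zeta\in K$, $M\zeta-\eta r\in K^D$, $\langle\zeta,M\zeta-\eta r\rangle=0$. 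From here the plan is to imitate the classical proof that Lemke's method processes $L$-matrices (the $K=\R^n_+$ case is in~\cite{cottle2009linear}), with faces of $K$ replacing coordinate subspaces. If $\eta>0$, then $\zeta$ solves~\eqref{eq:Msemimonotone} with data $-\eta r$, which lies in $\ri(K^D)$ because at the ray-start point $N_C$ is contained in $-K^D$ and has the same dimension $n-\dim\lin C$; so semimonotonicity forces $\zeta\in\lin K=\lin C$, and then invertibility of $M$ on $\lin C$ gives $\zeta=0$; but then $\pi_C$ is constant on the ray and, for the ray to reach $t\to\infty$, its cell must contain $r$, which — since $r$ lies in the relative interior of the starting cell's normal cone — forces the ray's cell to be the starting cell, contradicting non-repetition. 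Hence $\eta=0$, so $\zeta\neq0$, $M\zeta\in K^D$, $\zeta^\tr M\zeta=0$. Definition~\ref{def:Lmat}(b) then supplies a nonzero $z'\in K$ lying in every face of $K$ containing $\zeta$, with $-M^\tr z'$ in every face of $K^D$ containing $M\zeta$; carrying the (almost-)complementarity of the path along the whole secondary ray and combining it with $G_C=0$ at the base point produces $\langle z',q\rangle=-t^0\langle z',r\rangle<0$ (using $t^0>0$, the placement of $r$, and $z'\notin\lin K$, the last again via invertibility of $M$ on $\lin C$). Pairing $z'$ with a hypothetical solution of $Mz+q\in(\rec C)^D$ then contradicts $\langle z',q\rangle<0$, so that system is infeasible.

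The affine-piece computations and the sign arithmetic are routine. The delicate parts — where I expect the real work — are: keeping the face, recession and normal-cone relations consistent when $C$ has a nontrivial lineality space, so that the $L$-matrix hypothesis (stated for $\rec C$) genuinely applies to the extracted direction $\zeta$; showing precisely that the case $\eta>0$ collapses onto the starting ray, which is exactly where invertibility of $M$ on $\lin C$ gets used; and reproducing, for a general polyhedral recession cone, the endgame of the classical $L$-matrix processability argument that turns the secondary-ray certificate $z'$ into infeasibility of $Mz+q\in(\rec C)^D$ (in the LCP special case this step exploits the conic structure of $\R^n_+$).
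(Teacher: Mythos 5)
The paper does not actually prove Theorem~\ref{thm:cao}: it is stated as a citation of Theorem~4.4 of~\cite{cao96} and then used as a black box (most visibly in the proof of Theorem~\ref{thm:class}, which projects onto the reduced space and closes by ``We then apply Theorem~\ref{thm:cao}''). The only fragment of an argument the paper gives is Lemma~\ref{lem:aux_var_cst} in the Appendix, which extracts the secondary-ray properties ($\Delta t = 0$, $\Delta z\neq 0$, $\Delta z\in\rec C$, $M\Delta z\in(\rec C)^D$, $\Delta z^\tr M\Delta z=0$) and itself defers the substantive part to ``the first part of the proof of Theorem~4.4 in~\cite{cao96}''. So there is nothing in this paper to compare against line by line.

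As a reconstruction of the missing proof, your proposal follows the expected Eaves--Lemke template, and the skeleton is right: exclusivity via $N_C(z^*)\subseteq -(\rec C)^D$; finiteness of the complementary pivoting sequence, so the method terminates at $t=0$ or on a secondary ray; on a secondary ray, first-order matching of $G_C\equiv 0$ yields the system your $\zeta$ satisfies, the same system as~\eqref{eq:delta_z_ray} in Lemma~\ref{lem:aux_var_cst}; semimonotonicity plus $-t r\in\ri(K^D)$ kills the $\Delta t>0$ case, with invertibility of $M$ on $\lin C$ handling the lineality component ($M\zeta$ and $r$ both lie in $(\lin C)^\perp$, so $\zeta\in\lin C$ and $M_{QQ}$ invertible force $\zeta=0$, at which point the ray direction is a positive multiple of $r$ and the cell-nonrepetition argument applies); and Definition~\ref{def:Lmat}(b) supplies the certificate $z'$ for the $\Delta t=0$ case. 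The genuinely delicate steps you flag are the right ones, and I would add one more: the final ``endgame,'' where you claim $\langle z',q\rangle<0$ and then derive infeasibility, is precisely the part the paper never spells out and that~\cite{cao96} does carefully. In particular, deriving $\langle z',q\rangle = -t^{\,k}\langle z',r\rangle<0$ requires carrying almost-complementarity from the start ray through the whole path and pairing $z'$ against the equation $G_C(x^k,t^k)=0$ at the base of the secondary ray, and you also need $z'\notin\lin K$ (again via invertibility of $M$ on $\lin C$) to get strict negativity. The ``exactly one'' direction also needs the observation that infeasibility of $Mz+q\in(\rec C)^D$ precludes the first alternative, which you handled at the outset, so the dichotomy is clean. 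In short: the proposal is a faithful and essentially correct sketch of the Cao--Ferris argument, not an alternative route, and the paper simply defers to~\cite{cao96} rather than reproducing it.
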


\section{Theoretical results}\label{sec:theoretical-results}
\label{sec:theory}

In this section, we show that an implementation of \PATHAVI{} in the original
space enjoys the same properties as Theorem \ref{thm:cao}.
We first identify sufficient conditions to allow a ray start.
We define an \emph{implicit extreme point}, which is a generalization of 
an extreme point when the lineality space is nontrivial, and show that
there exists an implicit extreme point satisfying these sufficient conditions.
A computational method for finding such an implicit extreme point is described
in Section~\ref{sec:computing}. Our conditions generalize those required
for existing pivotal methods~\cite{cao96,dirkse95,lemke65} for \LCP{}, \MCP{},
and \AVI{}.

\PATHAVI{} can process $L$-matrices with respect to the recession cone of the
feasible set of the \AVI{}. To this end, we show that a 1-manifold
(the path $G_C^{-1}(0)$) generated by \PATHAVI{} with a ray start at an implicit
extreme point corresponds to a 1-manifold generated by the same pivotal
method with a ray start at an extreme point in the reduced space. The
reduced space is formed by projecting out the lineality space. 
This one-to-one correspondence is derived from
the structural correspondence of the faces and the normal cones between the
original space and the reduced one.
Then by applying the existing processability result
to the 1-manifold in the reduced space, we obtain the desired result.

\subsection{Sufficient conditions for a ray start and 
  processability of \PATHAVI{}}\label{sec:process}

We first identify sufficient conditions to perform a ray start at a point.

\begin{proposition}
Let an $\AVI(C,q,M)$ be given. If the following conditions are satisfied
at a point $\bar{z}$, then we can perform a ray start at $\bar{z}$.
\begin{itemize}
\item[$\bullet$] $M\bar{z} + q \in \aff(N_C(\bar{z}))$.
\item[$\bullet$] Every point in the interior of the $(n+1)$-cell 
  $((\bar{z}+\lin C)+N_C(\bar{z})) \times \mathbb{R}_+$ is regular.
\item[$\bullet$] There exists a complementary basis at $\bar{z}$
  such that $\aff(N_C(\bar{z}))$ is spanned by columns of the basic variables in
  $(\lambda,w,v)$.
\end{itemize}
\label{prop:sufficient}
\end{proposition}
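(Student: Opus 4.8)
The plan is to verify, one at a time, the three requirements that Section~\ref{sec:background} imposes on a ray start at $\bar z$: (i) produce $(x^0,t^0)$ with $G_C(x^0,t^0)=0$ lying on a half-line $\{(x(t),t):t\ge t^0\}$ along which $G_C(x(t),t)=0$ and $\pi_C(x(t))=\bar z$; (ii) show that this half-line lies in the interior of an $(n+1)$-cell of $\mathcal M_C$ and that $x^0$ is a regular point; and (iii) exhibit the complementary basis that realizes it, so that appending $-tr$ to the first block of \eqref{eq:comp-eq} and pivoting $t$ in yields an almost-complementary feasible basis. Throughout we take $\bar z$ to be an implicit extreme point, so that $F:=\bar z+\lin C$ is the minimal face of $C$ whose relative interior contains $\bar z$; this is the regime in which the $(n+1)$-cell $\bigl((\bar z+\lin C)+N_C(\bar z)\bigr)\times\R_+$ named in the hypotheses is genuinely full-dimensional.

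First I would build the half-line. Since $N_C(\bar z)$ is a polyhedral cone containing $0$, its affine hull equals $\sspan(N_C(\bar z))$ and $\ri(N_C(\bar z))\neq\emptyset$; fix $r\in\ri(N_C(\bar z))$, which will play the role of the covering vector of \eqref{eq:gc}, and set $x(t):=\bar z+tr-(M\bar z+q)$. The first bullet gives $M\bar z+q\in\sspan(N_C(\bar z))$, hence $tr-(M\bar z+q)\in\sspan(N_C(\bar z))$ for every $t$, and because $\ri(N_C(\bar z))$ is a relatively open cone with $r$ inside it, there is $t^0\ge 0$ such that $tr-(M\bar z+q)\in\ri(N_C(\bar z))$ for all $t\ge t^0$. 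Then $x(t)-\bar z\in N_C(\bar z)$, so the projection characterization $\pi_C(y)=z\iff y-z\in N_C(z)$ yields $\pi_C(x(t))=\bar z$, whence $M_C(x(t))=M\bar z+q+\bigl(x(t)-\bar z\bigr)=tr$, i.e.\ $G_C(x(t),t)=M_C(x(t))-tr=0$. Set $x^0:=x(t^0)$. Since the normal cone is constantly $N_C(\bar z)$ on $\ri F$, the set $\sigma:=F+N_C(\bar z)$ is a cell of $\mathcal N_C$ with $\dim\sigma=\dim(\lin C)+\dim N_C(\bar z)=n$, and for $t\ge t^0$ we have $x(t)\in\bar z+\ri(N_C(\bar z))\subseteq\ri F+\ri(N_C(\bar z))=\ri\sigma=\itr\sigma$; thus the half-line lies in $\itr(\sigma\times\R_+)$, and by the second bullet $x^0$ is regular.

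Next, the algebraic realization. By the third bullet there is a complementary basis of \eqref{eq:comp-eq}--\eqref{eq:comp-perp} at $\bar z$ (so the associated basic solution has $z=\bar z$) whose basic columns among $(\lambda,w,v)$ span $\aff(N_C(\bar z))$. Since $N_C(\bar z)$ is exactly the cone generated by the active columns of $[A^\tr,\ I,\ -I]$ appearing in the first block of \eqref{eq:comp-eq}, and $r\in\ri(N_C(\bar z))$, one can write $r=A^\tr\lambda_r+w_r-v_r$ with the basic components of $(\lambda_r,w_r,v_r)$ strictly of the sign prescribed by complementarity; together with $M\bar z+q\in\aff(N_C(\bar z))$, this shows that adding $-tr$ to the first block and pivoting $t$ into the basis produces an almost-complementary feasible basis whose basic solution at parameter $t$ is precisely $(x(t),t)$ of the first step (in particular $z=\bar z=\pi_C(x(t))$, and the slacks and multipliers vary affinely with $t$). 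Because $r$ lies in the relative interior of $N_C(\bar z)$, increasing $t$ keeps this basis feasible without bound, so the half-line is a genuine starting ray; and the first pivot (decreasing $t$) is well defined because $x^0$ is regular, i.e.\ the data are lexicographically nondegenerate. Combining the three steps gives a ray start at $\bar z$.

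The main obstacle is this last step: one has to align the purely combinatorial pivot --- the form of the basis once $t$ is basic, the ratio test, and the lexicographic tie-breaking --- with the geometric half-line built earlier, and in particular confirm that the basis furnished by the third hypothesis is almost-complementary feasible and that $t$ is unbounded along it. By contrast, the geometric parts are routine: they amount to manipulating relative interiors of $N_C(\bar z)$, using the dimension identity $\dim F+\dim N_C(\bar z)=n$, and invoking the projection characterization of $\pi_C$.
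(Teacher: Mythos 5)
Your proof takes essentially the same route as the paper's, just with the two halves swapped: the paper starts from the complementary basic solution furnished by the third bullet, parameterizes $(\lambda(t),w(t),v(t))$ algebraically, then derives the half-line $x(t)$ and observes it lives in the cell named in the second bullet; you build $x(t)$ geometrically first, verify $\pi_C(x(t))=\bar z$ and interior membership in $\sigma\times\R_+$, and only then attach the basis. The content of each step matches what the paper does.

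Two small remarks. First, you declare at the outset ``throughout we take $\bar z$ to be an implicit extreme point,'' which the proposition does not assume; it is, however, a benign addition, because the second bullet already presupposes that $\bigl((\bar z+\lin C)+N_C(\bar z)\bigr)\times\R_+$ is an $(n+1)$-cell of $\mathcal M_C$, which forces $\bar z+\lin C$ to be a face of dimension $\dim(\lin C)$ and hence $\bar z$ to be an implicit extreme point (Proposition~\ref{prop:implicit-face}); it would be cleaner to derive this rather than posit it. Second, both you and the paper slide past the same subtle point in the final step: once the complementary basis is fixed, the basic $(\lambda,w,v)$ columns form a \emph{basis} of $\aff(N_C(\bar z))$, but they need not conically generate all of $N_C(\bar z)$, so an arbitrary $r\in\ri(N_C(\bar z))$ need not have a nonnegative (let alone strictly signed) representation in terms of those basic columns alone --- which is exactly what is needed for the basic $(\lambda(t),w(t),v(t))$ to become feasible for large $t$. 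The fix is to choose $r$ from the relative interior of the simplicial subcone spanned by the basic columns; since that subcone has the same affine hull as $N_C(\bar z)$ and is contained in it, its relative interior lies in $\ri(N_C(\bar z))$, so such a choice is always possible and preserves everything else in the argument.
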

\begin{proof}
Pick a vector $r \in \ri(N_C(\bar{z}))$. Let $(z,\lambda,w,v,s)$ be the
complementary basic solution to \eqref{eq:comp-eq} and \eqref{eq:comp-perp} 
corresponding to the given complementary
basis. Note that $z=\bar{z}$, thus $s$ is feasible. Therefore only basic
variables in $(\lambda,w,v)$ might be infeasible.
The first and third conditions say that we have
$Mz+q - A^\tr \lambda - w + v = 0$. By the third condition, for each
$t \ge 0$ we have a unique $(\lambda(t),w(t),v(t))$ satisfying
$Mz+q - A^\tr \lambda(t) - w(t) + v(t)  - tr = 0$. 
As $r \in \ri(N_C(\bar{z}))$, there exists $t^0 \ge 0$ such that
for all $t \ge t^0$ we have $Mz+q - A^\tr \lambda(t) - w(t) + v(t)  - tr = 0$
and $(\lambda(t),w(t),v(t))$ are feasible variables. 
Then for all $t \ge t^0$ $(x(t),t)$ with 
$x(t):=\bar{z} - A^\tr \lambda(t) - w(t) + v(t)$
lies in the cell $((\bar{z} + \lin C) + N_C(\bar{z})) \times \mathbb{R}_+$
with $\pi_C(x(t))=\bar{z}$ and $G_C(x(t),t)=0$. 
By the second condition, the ray $(x(t),t)$ is generated
at a regular point. By pivoting the $t$ variable into the complementary
basis, we see that we can perform a ray start at $\bar{z}$.
\end{proof}

Note that the sufficient conditions are satisfied at an extreme point. If
$z$ is an extreme point, then $\aff(N_C(z))\equiv\mathbb{R}^n$ thus the first
condition is trivially satisfied. Each extreme point has a corresponding
basic feasible solution (BFS) to $Ax-b=s$ \cite[Section 3.4]{murty76}, and
with that BFS we can construct a complementary
basis satisfying the third condition as shown in Proposition~\ref{prop:comp}
later in this paper. The second condition is also satisfied as proved in
Proposition~\ref{prop:regular}. 
As the existing pivotal methods \cite{cao96,dirkse95,lemke65} for \LCP{},
\MCP{}, and \AVI{} perform a ray start at an extreme point, we see that the
sufficient conditions generalize the existing result.

We now define an implicit extreme point, which is a generalization
of an extreme point when the lineality space is nontrivial.


\begin{definition}
Let $C$ be a convex set in $\mathbb{R}^n$. A point $z \in C$ is called
\textit{an implicit extreme point} of $C$ if
$z=\lambda z^1 + (1-\lambda)z^2$ for any $z^1,z^2 \in C$ and
$\lambda \in (0,1)$ implies that $z-z^1 \in \lin C$ and $z-z^2 \in \lin C$.
\label{def:implicit}
\end{definition}

Note that if the lineality space of $C$ is trivial,
that is, $\lin C=\{0\}$, then the definition of an implicit extreme point
coincides with definition of an extreme point.

In the following four propositions, we study some characteristics of
implicit extreme points. These characteristics are a generalization of
those of extreme points. They are used as a tool for showing the
existence of an implicit extreme point satisfying the sufficient conditions
and for structural analysis later in this section.
We start with faces consisting of only implicit extreme points. 
This generalizes 0-dimensional faces that are equivalent to extreme points.
As the proof is elementary, we omit it.

\begin{proposition}
Let $C$ be a nonempty convex set in $\mathbb{R}^n$ and $\ell = \dim(\lin C)$.
Then every point in an $\ell$-dimensional face of $C$ is an implicit extreme
point of $C$. Also for each implicit extreme point $z$ of $C$ we have
$F=z + \lin C$ is an $\ell$-dimensional face of $C$.
\label{prop:implicit-face}
\end{proposition}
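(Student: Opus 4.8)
The plan is to prove the two assertions separately, both resting on a single structural observation: \emph{an $\ell$-dimensional face $F$ of $C$ must satisfy $\pspace F = \lin C$}, equivalently $F = z + \lin C$ for every $z \in F$. Granting this, each direction of the proposition reduces to a one-line combination of the definition of a face with Definition~\ref{def:implicit}.

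To establish the observation, take any $w \in F$ and any $d \in \lin C$. Since $C$ is invariant under translation by $\lin C$, both $w+d$ and $w-d$ lie in $C$; as $w = \tfrac12(w+d)+\tfrac12(w-d)$ is a strict convex combination of points of $C$ that happens to lie in the face $F$, the defining property of a face forces $w\pm d \in F$. Because $w$ was arbitrary this gives $F + d = F$, so $\lin C \subseteq \lin F$. On the other hand one always has $\lin F \subseteq \pspace F$, and $\dim \pspace F = \dim F = \ell = \dim \lin C$; hence the inclusions $\lin C \subseteq \lin F \subseteq \pspace F$ are all equalities, so $\pspace F = \lin C$ and $F = z + \lin C$ for each $z \in F$.

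For the first assertion, let $z$ lie in an $\ell$-dimensional face $F$ and suppose $z = \lambda z^1 + (1-\lambda)z^2$ with $z^1, z^2 \in C$ and $\lambda \in (0,1)$. The face property gives $z^1, z^2 \in F$, and since $z$ and each $z^i$ lie in $F = z + \lin C$ we get $z - z^i \in \lin C$ (using that $\lin C$ is a subspace), which is exactly the condition in Definition~\ref{def:implicit}. For the converse, let $z$ be an implicit extreme point and put $F := z + \lin C$; it is contained in $C$ by translation-invariance of $C$ under $\lin C$, and it is an affine subset of dimension $\dim \lin C = \ell$. To see $F$ is a face, take $y = z + d \in F$ with $d \in \lin C$ and suppose $y = \lambda y^1 + (1-\lambda)y^2$ with $y^1, y^2 \in C$ and $\lambda \in (0,1)$. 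Subtracting $d$ gives $z = \lambda(y^1 - d) + (1-\lambda)(y^2 - d)$ with $y^i - d \in C$, so implicit extremality of $z$ yields $z - (y^i - d) \in \lin C$, i.e.\ $y - y^i \in \lin C$, whence $y^i \in y + \lin C = z + \lin C = F$. Thus $F$ is a face, as required.

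The only real care needed — and the reason the paper can dismiss it as elementary — is bookkeeping: the face property must be invoked in the correct direction (a point of the face written as a strict convex combination of points of $C$ forces those points into the face, not conversely), and the identity $C + d = C$ for $d \in \lin C$ must be used to legitimize the translated points $w \pm d$ and $y^i - d$. No deeper convexity or polyhedrality is involved, so the argument goes through for an arbitrary nonempty convex set.
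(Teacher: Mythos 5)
Your proof is correct and takes essentially the same route the paper has in mind (the paper omits the proof as elementary, but its underlying argument is the same: translation-invariance of $C$ under $\lin C$, the face property applied to strict convex combinations, and a dimension count to pin down $\pspace F = \lin C$). Your presentation is in fact a bit more self-contained, since you derive $F + \lin C = F$ directly from the definition of a face rather than leaning on any auxiliary result.
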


We prove next that the affine hull of the normal cone to $C$ at an implicit
extreme point is the orthogonal complement of the lineality space of $C$.
This generalizes the fact that the normal cone to $C$ at an extreme point
is full-dimensional.

\begin{proposition}
A point $z$ is an implicit extreme point of a nonempty
polyhedral convex set $C$ in $\mathbb{R}^n$ if and only if
$z \in C$ and $\aff(N_C(z))=(\lin C)^\perp$.
\label{prop:normal-cone-perp}
\end{proposition}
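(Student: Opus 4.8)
The plan is to prove both directions of the equivalence using the characterizations of implicit extreme points already established, in particular Proposition~\ref{prop:implicit-face}, together with the standard face/normal-cone duality for polyhedra.

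\textbf{($\Rightarrow$) Suppose $z$ is an implicit extreme point.} By Proposition~\ref{prop:implicit-face}, $F := z + \lin C$ is an $\ell$-dimensional face of $C$, where $\ell = \dim(\lin C)$. For any polyhedral convex set, the normal cone $N_C(z)$ has constant value on $\ri F$, and this common value is precisely the polar-type cone dual to the face $F$; moreover $\aff(N_C(z))$ is the orthogonal complement of $\pspace F$, the subspace parallel to $\aff F$. Since $F = z + \lin C$, we have $\pspace F = \lin C$, hence $\aff(N_C(z)) = (\lin C)^\perp$. (Concretely: $\pspace F \subseteq \lin(N_C(z))^\perp$ always, and one checks equality from $\dim F + \dim(\aff N_C(z)) = n$, which holds because $F$ being a minimal-dimension face means the normal cone is ``maximal'' in the complementary dimension.) I would make this dimension-count rigorous by writing $C = \{y : Ay - b \in K,\ l \le y \le u\}$ and identifying the active constraints at $z$; the lineality space is the common null space of all constraints active throughout a neighborhood in $C$, and the affine hull of the normal cone is spanned by the gradients of those active constraints, which is exactly $(\lin C)^\perp$.

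\textbf{($\Leftarrow$) Suppose $z \in C$ and $\aff(N_C(z)) = (\lin C)^\perp$.} I want to show $z$ is an implicit extreme point. Let $z = \lambda z^1 + (1-\lambda) z^2$ with $z^1, z^2 \in C$ and $\lambda \in (0,1)$. Pick any $r \in \ri(N_C(z))$; by the variational characterization of the normal cone, $\langle r, y - z \rangle \le 0$ for all $y \in C$, so in particular $\langle r, z^i - z \rangle \le 0$ for $i = 1, 2$. But $\lambda \langle r, z^1 - z\rangle + (1-\lambda)\langle r, z^2 - z\rangle = \langle r, z - z\rangle = 0$, forcing $\langle r, z^i - z\rangle = 0$ for $i = 1,2$. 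Since this holds for every $r$ in the relative interior of $N_C(z)$, it holds for every $r \in \aff(N_C(z)) = (\lin C)^\perp$ (the relative interior spans the affine hull, which here is a subspace). Therefore $z^i - z \in ((\lin C)^\perp)^\perp = \lin C$ for $i = 1, 2$, which is exactly the defining condition for $z$ to be an implicit extreme point.

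\textbf{Main obstacle.} The forward direction's dimension count is the only place requiring care: I need that $\aff(N_C(z))$ is a subspace (true since $\lin C$ contains the line segment structure forcing $N_C(z)$ to lie in a subspace through the origin) of dimension exactly $n - \ell$. One could sidestep the abstract face duality entirely and argue directly: $\aff(N_C(z)) \subseteq (\lin C)^\perp$ always holds, because any $d \in \lin C$ satisfies $z \pm d \in C$, forcing $\langle r, d\rangle = 0$ for all $r \in N_C(z)$; for the reverse inclusion, if $z$ is an implicit extreme point but some $d \in (\lin C)^\perp$ were orthogonal to all of $N_C(z)$, then moving along $\pm d$ from $z$ would stay in $C$ (since $d \perp N_C(z)$ and $N_C(z)$ determines the feasible directions at $z$ up to $\lin C$), giving a nontrivial representation $z = \frac12(z+d) + \frac12(z-d)$ with $d \notin \lin C$, a contradiction. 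Making the phrase ``$N_C(z)$ determines the feasible directions'' precise via the active-constraint description is the technical heart, but it is routine polyhedral geometry.
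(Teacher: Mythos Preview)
Your forward direction is essentially the paper's: both invoke Proposition~\ref{prop:implicit-face} to identify $F = z + \lin C$ as a face, then use the standard duality $\pspace F = (\aff N_F)^\perp$ (the paper cites this as \cite[Proposition~2.1]{robinson92} rather than redoing a dimension count or active-constraint computation; your ``routine polyhedral geometry'' remark is pointing at exactly that fact). Your alternative contradiction argument in the obstacle paragraph is also correct once you insert a small $\epsilon$: if $d \perp N_C(z)$ then $\pm d$ lie in the tangent cone, which for polyhedral $C$ coincides with the cone of feasible directions, so $z \pm \epsilon d \in C$ and the implicit-extreme-point definition forces $d \in \lin C$.

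Your backward direction, however, is genuinely different from the paper's and more elementary. The paper argues structurally: pick the face $F$ with $z \in \ri F$, use $\pspace F = (\aff N_F)^\perp$ again to deduce $\pspace F = \lin C$, hence $F = z + \lin C$, and then appeal to Proposition~\ref{prop:implicit-face}. You instead verify the defining condition of an implicit extreme point directly via the variational inequality for the normal cone, never touching the face lattice or Proposition~\ref{prop:implicit-face} in this direction. Your route is shorter and self-contained; the paper's route has the advantage of reusing a single cited identity symmetrically in both directions, which keeps the proof uniform.
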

\begin{proof}
(only-if) Suppose that $z$ is an implicit extreme point of $C$.
Using Proposition~\ref{prop:implicit-face}, $F=z+\lin C$ is a face
of $C$. We then have $\pspace F = \lin C$.
By \cite[Proposition 2.1]{robinson92}, $\pspace F=(\aff N_F)^\perp$,
where $N_F$ represents the normal cone having the same value for all
$\hat{z} \in \ri F$, i.e., $N_C(\hat{z}^1)=N_F=N_C(\hat{z}^2)$ for all
$\hat{z}^1,\hat{z}^2 \in \ri F$. As $z \in \ri F$, it follows that
$\aff(N_C(z))=(\lin C)^\perp$.

(if) Suppose that $z \in C$ and $\aff(N_C(z))=(\lin C)^\perp$.
Pick a face $F$ of $C$ such that $z \in \ri F$. Such a face exists
by \cite[Theorem 18.2]{rockafellar70}. Then $N_C(z)=N_F$, where
$N_F$ is the normal cone having constant value on $\ri F$.
As $\pspace F = (\aff N_F)^\perp$, we then have $\pspace F = \lin C$.
Thus $F=z+\lin C$. By Proposition~\ref{prop:implicit-face},
$z$ is an implicit extreme point of $C$.
\end{proof}

Next we show that the second condition in 
Proposition~\ref{prop:sufficient} is satisfied at an implicit extreme point.
Note that in the proposition below we show $\dim(M_C(\sigma))=n$, which
implies that $\dim(G_C(\sigma\times\mathbb{R}_+))=n$.

\begin{proposition}
Let $z$ be an implicit extreme point of a nonempty polyhedral convex
set $C$ in $\mathbb{R}^n$ and $\sigma$ be the cell 
$((z+\lin C) + N_C(z))$ in the normal manifold of $C$. 
Then for an $\AVI(C,q,M)$ with $M$ invertible on the lineality space of
$C$, we have $\dim(M_C(\sigma))=n$. 
\label{prop:regular}
\end{proposition}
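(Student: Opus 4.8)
The plan is to compute $M_C$ on the cell $\sigma = (z + \lin C) + N_C(z)$ explicitly, using the fact that the normal map is affine on $\sigma$ and that $\pi_C$ has constant value $z$ on $\sigma$ (since $z + \lin C$ is the face $F$ whose relative interior carries the normal cone $N_C(z)$, by Proposition~\ref{prop:implicit-face}). For $x \in \sigma$ we can write $x = (z + p) + y$ with $p \in \lin C$ and $y \in N_C(z)$, and then $\pi_C(x) = z + p$, so that
\[
  M_C(x) = M(z+p) + q + (z+p) + y - (z+p) = M(z+p) + q + y.
\]
Hence $M_C(\sigma) = Mz + q + M(\lin C) + N_C(z)$, an affine image of the set $\lin C \times N_C(z)$ (translated by the fixed vector $Mz+q$). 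So the claim $\dim(M_C(\sigma)) = n$ reduces to showing that the linear span of $M(\lin C) + N_C(z)$ is all of $\mathbb{R}^n$, i.e. that $M(\lin C) + \sspan(N_C(z)) = \mathbb{R}^n$.

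Next I would use the implicit extreme point hypothesis: by Proposition~\ref{prop:normal-cone-perp}, $\aff(N_C(z)) = (\lin C)^\perp$, and since $N_C(z)$ is a cone containing $0$, its affine hull equals its span, so $\sspan(N_C(z)) = (\lin C)^\perp$. Therefore I must show $M(\lin C) + (\lin C)^\perp = \mathbb{R}^n$. Equivalently, writing $L := \lin C$, it suffices that $M(L) \cap L^\perp$ has dimension $0$ is \emph{not} what I want — rather, a dimension count shows $\dim(M(L) + L^\perp) = \dim M(L) + \dim L^\perp - \dim(M(L)\cap L^\perp)$, and since $M$ is invertible on $L$ we have $\dim M(L) = \dim L$, so $M(L) + L^\perp = \mathbb{R}^n$ iff $M(L) \cap L^\perp = \{0\}$.

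So the crux is: $M$ invertible on $L = \lin C$ implies $M(L) \cap L^\perp = \{0\}$. This is the step I expect to be the main obstacle, because "invertible on the lineality space" presumably means $M|_L : L \to L$ is a bijection (i.e. $M$ maps $L$ into $L$ and the restriction is invertible), not merely that $M$ is injective on $L$. Under that reading, $M(L) = L$, so $M(L) \cap L^\perp = L \cap L^\perp = \{0\}$ immediately, and we are done. I would therefore first pin down the precise meaning of "$M$ invertible on the lineality space of $C$" as used in Theorem~\ref{thm:cao} and~\cite{cao96} — most likely it means the Schur-complement-type condition that $Q^\tr M Q$ is nonsingular, where $Q$ is the orthonormal basis of $L$ fixed in the notation section, which does \emph{not} by itself force $M(L)\subseteq L$. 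In that case I would argue as follows: suppose $Mx \in L^\perp$ for some $x \in L$; then $Q^\tr M x = 0$; writing $x = Q c$ gives $Q^\tr M Q c = 0$, so $c = 0$ by nonsingularity of $Q^\tr M Q$, hence $x = 0$. This gives $M(L) \cap L^\perp = \{0\}$ directly without needing $M$-invariance of $L$. Combining with the dimension count above yields $\dim(M_C(\sigma)) = \dim(M(L) + L^\perp) = n$, and since $\dim(G_C(\sigma \times \mathbb{R}_+)) \ge \dim(M_C(\sigma))$ (the $t$-direction only adds, via $-tr$ with $r \in N_C(z) \subseteq L^\perp$, nothing new), in fact exactly $n$, completing the proof.
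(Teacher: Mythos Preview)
Your proposal is correct and is essentially the same argument as the paper's, just phrased set-theoretically rather than in matrix form. The paper represents the linear part of $M_C$ on $\sigma$ in the basis $(Q,\bar Q)$ as the block-triangular matrix
\[
\begin{bmatrix} Q^\tr M Q & 0 \\ \bar Q^\tr M Q & I \end{bmatrix},
\]
observes that this is invertible because $Q^\tr M Q$ is, and concludes since $\sigma$ is $n$-dimensional; your computation $M_C(\sigma) = Mz + q + M(\lin C) + N_C(z)$ together with the step ``$Q^\tr M Q$ nonsingular $\Rightarrow M(L)\cap L^\perp = \{0\}$'' is exactly the same content unpacked. Your closing remark about $G_C$ is unnecessary here (the proposition only asserts $\dim(M_C(\sigma))=n$), and you can drop the exploratory detour about whether ``invertible on $\lin C$'' might mean $M(L)\subseteq L$ --- as you correctly surmise, in this paper it means $Q^\tr M Q$ is nonsingular.
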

\begin{proof}
By \cite[Proposition 2.5]{robinson92}, $M_C$ coincides with some affine
transformation $A_\sigma$ on $\sigma$. In the basis $Z=(Q \ \ \bar{Q})$, 
we can represent the matrix $A_{\sigma}(\cdot) - A_{\sigma}(z)$ as follows:
\begin{equation*}
  \begin{bmatrix}[1.5]
    Q^\tr M Q &\quad 0\\
    \bar{Q}^\tr M Q &\quad I
  \end{bmatrix}.
\end{equation*}
As $Q^\tr M Q$ is invertible, the matrix $A_{\sigma}(\cdot) - A_{\sigma}(z)$
is invertible. As $\sigma$ is $n$-dimensional, the result follows.
\end{proof}

Finally, the last characteristic presents that on each $\ell$-dimensional 
face $F$ with $\ell=\dim(\lin C)$
(hence consisting of only implicit extreme points by 
Proposition~\ref{prop:implicit-face}) there exists an implicit extreme
point $z \in F$ such that $Mz+q \in \aff(N_C(z))$. This generalizes the fact
that at each extreme point $\bar{z}$ we have
$M\bar{z}+q \in \aff(N_C(\bar{z}))\equiv\mathbb{R}^n$.

\begin{proposition}
Let an $\AVI(C,q,M)$ problem be given and $z \in C$ be an implicit
extreme point of $C$. Assume that $M$ is invertible on the lineality space of
$C$. Then there exists $\hat{z} \in z + \lin C$ such that
$M\hat{z} + q \in \aff(N_C(\hat{z}))$.
\label{prop:existence}
\end{proposition}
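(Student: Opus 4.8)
The plan is to reduce the assertion to a single square linear system on the lineality space and solve it using the invertibility hypothesis. First I would fix $\ell=\dim(\lin C)$ and invoke Proposition~\ref{prop:implicit-face}: since $z$ is an implicit extreme point, $F:=z+\lin C$ is an $\ell$-dimensional face of $C$, and every point of an $\ell$-dimensional face is again an implicit extreme point. Hence, for every candidate $\hat z\in z+\lin C$, Proposition~\ref{prop:normal-cone-perp} yields $\aff(N_C(\hat z))=(\lin C)^\perp$. So the target condition $M\hat z+q\in\aff(N_C(\hat z))$ is equivalent to $M\hat z+q\perp\lin C$, i.e., to $Q^\tr(M\hat z+q)=0$, because the columns of $Q$ form a basis of $\lin C$.

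Next I would parametrize the affine set $z+\lin C$ as $\{\,\hat z = z + Qv \mid v\in\R^{\ell}\,\}$. Substituting $\hat z = z+Qv$ into $Q^\tr(M\hat z+q)=0$ turns the requirement into the linear system $Q^\tr M Q\, v = -\,Q^\tr(Mz+q)$ in the unknown $v\in\R^{\ell}$. The key step is then to note that ``$M$ invertible on the lineality space of $C$'' means precisely that the $\ell\times\ell$ matrix $Q^\tr M Q$ is nonsingular — this is the very same matrix appearing in the block representation in the proof of Proposition~\ref{prop:regular}. Consequently the system has the unique solution $v = -(Q^\tr M Q)^{-1}Q^\tr(Mz+q)$, and setting $\hat z := z + Qv$ gives a point of $z+\lin C$ with $M\hat z+q\in(\lin C)^\perp=\aff(N_C(\hat z))$, which is exactly the claim.

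There is no substantial obstacle in this argument; it is essentially a change of basis plus one matrix inversion. The only two points that require care are (i) justifying that $\aff(N_C(\hat z))$ equals $(\lin C)^\perp$ not just at $z$ but at the produced point $\hat z$, which is handled by the face characterization of Proposition~\ref{prop:implicit-face} together with Proposition~\ref{prop:normal-cone-perp} (alternatively, by constancy of the normal cone on the relative interior of the face $z+\lin C$, which is all of $z+\lin C$); and (ii) making explicit the identification of the hypothesis ``$M$ invertible on $\lin C$'' with the invertibility of $Q^\tr M Q$, so that the linear system is genuinely solvable.
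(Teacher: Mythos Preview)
Your proposal is correct and follows essentially the same route as the paper: reduce the condition $M\hat z+q\in\aff(N_C(\hat z))$ to $Q^\tr(M\hat z+q)=0$ via Propositions~\ref{prop:implicit-face} and~\ref{prop:normal-cone-perp}, parametrize $\hat z=z+Qv$, and solve the resulting square system using the invertibility of $M_{QQ}=Q^\tr MQ$. The paper just writes the solution $v$ out explicitly in terms of $M_{Q\bar Q}$ and $\bar Q^\tr z$, but it is the same vector as your $-(Q^\tr MQ)^{-1}Q^\tr(Mz+q)$.
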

\begin{proof}
For any implicit extreme point $\hat{z}$ of $C$, 
$M\hat{z}+q \in \aff(N_C(\hat{z}))$ if and only if $\pi_{\lin C}(M\hat{z}+q)=0$
by Proposition~\ref{prop:normal-cone-perp}.
By the assumption, $M_{QQ}$ is invertible.
Set
\[
\hat{z} = z + Qy \quad \text{where} \quad
y = -M_{QQ}^{-1}(Q^\tr q + M_{Q\bar{Q}}\bar{Q}^\tr z) - Q^\tr z.
\]
Then $\hat{z} \in z + \lin C$ thus $\hat{z}$ is an implicit extreme point
of $C$ by Proposition~\ref{prop:implicit-face}, and
\begin{equation*}
  \begin{aligned}
    Q^\tr (M\hat{z}+q) &= Q^\tr \left(M
      \begin{bmatrix}Q & \bar{Q} \end{bmatrix}
      \begin{bmatrix}Q^\tr \\ \bar{Q}^\tr \end{bmatrix}\hat{z} + q
    \right),\\
    &= M_{QQ}(Q^\tr \hat{z}) + M_{Q\bar{Q}}(\bar{Q}^\tr \hat{z}) + Q^\tr q,\\
    &= M_{QQ}(Q^\tr z + y)+M_{Q\bar{Q}}(\bar{Q}^\tr z)+Q^\tr q,\\
    &= 0.
  \end{aligned}
\end{equation*}
It follows that $\pi_{\lin C}(M\hat{z}+q)=0$.
\end{proof}

By Propositions~\ref{prop:regular} and~\ref{prop:existence},
there exists an implicit extreme point satisfying the first two sufficient
conditions for a ray start.
We postpone checking the third condition to Section~\ref{sec:computing}
as it requires a constructive proof. For the rest of this section, we assume
that we have an implicit extreme point satisfying the sufficient conditions.

We now turn our attention to the processability of \PATHAVI{}. Assume that
we perform a ray start at an implicit extreme point and generate a 1-manifold
in the original space $\mathbb{R}^n$. Our basic idea of deriving
processability is that for this 1-manifold there corresponds to a 1-manifold
generated by the same pivotal method with a ray start at an extreme point 
defined in the reduced space having possibly smaller dimension. We can then use the
existing processability result \cite[Theorem 4.4]{cao96}. To establish the
correspondence, we prove that there is a one-to-one correspondence between
the faces,  the normal cones, and the full-dimensional cells of the original
space and reduced space as the following proposition shows. 

\begin{proposition}
Let $C$ be a nonempty polyhedral convex set in $\mathbb{R}^n$ and
$\tilde{C}$ be the set $\tilde{C}=\bar{Q}^{\tr}C=\{\tilde{z} \,|\,
\tilde{z}=\bar{Q}^{\tr}z \, \text{ for some }\,z \in C\}$ defined in
$\mathbb{R}^{n-\ell}$ where $\ell=\dim(\lin C)$. Then the followings
hold.
\begin{enumerate}[label=(\alph*)]
\item $z$ is an implicit extreme point of $C$ if and only if
  $\tilde{z}=\bar{Q}^{\tr}z$ is an extreme point of $\tilde{C}$.
\item $F$ is a face of $C$ if and only if
  $\tilde{F}=\bar{Q}^{\tr}F$ is a face of $\tilde{C}$.
\item $v \in N_C(z)$ if and only if $v = \bar{Q} \tilde{v}$ for some
  $\tilde{v} \in N_{\tilde{C}}(\tilde{z})$ where
  $\tilde{z}=\bar{Q}^{\tr}z$.
\item $\sigma$ is an $n$-cell of the normal manifold $\mathcal{N}_C$ of $C$
  if and only if $\tilde{\sigma}=\bar{Q}^\tr \sigma$ is an $(n-\ell)$-cell
  of the normal manifold $\mathcal{N}_{\tilde{C}}$.
\end{enumerate}
\label{prop:twokey}
\end{proposition}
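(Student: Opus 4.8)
The plan is to reduce everything to the orthogonal decomposition induced by $\bar Q$. I would first record the structural facts: the columns of $\bar Q$ form an orthonormal basis of $(\lin C)^\perp$, so $\bar Q^\tr\bar Q=I_{n-\ell}$, $\bar Q\bar Q^\tr=\pi_{(\lin C)^\perp}$, $\ker\bar Q^\tr=\lin C$, and $\bar Q^\tr$ restricts to a linear isometry of $(\lin C)^\perp$ onto $\mathbb{R}^{n-\ell}$ with inverse $\bar Q$; and since $\lin C$ is the lineality space, $C=C_0+\lin C$ with $C_0:=C\cap(\lin C)^\perp$ a nonempty polyhedral set, so $\tilde C=\bar Q^\tr C=\bar Q^\tr C_0$ is polyhedral and $\bar Q^\tr$ carries $C_0$ bijectively onto $\tilde C$. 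I would also isolate two elementary facts used throughout: (i) every nonempty face $F$ of $C$ satisfies $F+\lin C=F$ (if $x\in F$, $a\in\lin C$, then $x\pm a\in C$ and $x=\tfrac12(x+a)+\tfrac12(x-a)$, so the face property forces $x+a\in F$); and (ii) $N_C(z)\subseteq(\lin C)^\perp$ for every $z\in C$ (testing $\langle v,\cdot-z\rangle\le 0$ at $z\pm a$, $a\in\lin C$, gives $\langle v,a\rangle=0$), so every $v\in N_C(z)$ is $v=\bar Q\tilde v$ with $\tilde v:=\bar Q^\tr v$.

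For (a): if $z$ is an implicit extreme point and $\tilde z=\bar Q^\tr z=\mu\tilde z^1+(1-\mu)\tilde z^2$ with $\tilde z^i\in\tilde C$, $\mu\in(0,1)$, I lift each $\tilde z^i$ to $z^i:=\bar Q\tilde z^i\in C_0\subseteq C$; then $\bar Q^\tr(\mu z^1+(1-\mu)z^2)=\bar Q^\tr z$, so $\mu z^1+(1-\mu)z^2=z+a$ for some $a\in\lin C$, hence $z=\mu(z^1-a)+(1-\mu)(z^2-a)$ with $z^i-a\in C$; the implicit-extreme-point property gives $z-(z^i-a)\in\lin C$, and applying $\bar Q^\tr$ yields $\tilde z=\tilde z^i$. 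Conversely, if $\tilde z$ is an extreme point of $\tilde C$ and $z=\lambda z^1+(1-\lambda)z^2$ in $C$, then $\tilde z=\lambda\bar Q^\tr z^1+(1-\lambda)\bar Q^\tr z^2$ forces $\bar Q^\tr(z-z^i)=0$, i.e.\ $z-z^i\in\ker\bar Q^\tr=\lin C$. For (b), in the forward direction $\bar Q^\tr F$ is convex, contained in $\tilde C$, and is a face: given $\tilde z\in\bar Q^\tr F$ with $\tilde z=\mu\tilde z^1+(1-\mu)\tilde z^2$, $\tilde z^i\in\tilde C$, lift to $z^i\in C$ so that $\mu z^1+(1-\mu)z^2=z+a$ for some $z\in F$, $a\in\lin C$; by (i) this point lies in $F$, so the face property gives $z^i\in F$ and $\tilde z^i\in\bar Q^\tr F$. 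For the converse, $F':=C\cap(\bar Q^\tr)^{-1}(\tilde F)$ is a face of $C$ whenever $\tilde F$ is a face of $\tilde C$ (the preimage of a face under a linear map is a face), and surjectivity of $\bar Q^\tr:C\to\tilde C$ gives $\bar Q^\tr F'=\tilde F$; using (i), $F\mapsto\bar Q^\tr F$ and $\tilde F\mapsto C\cap(\bar Q^\tr)^{-1}(\tilde F)$ are mutually inverse bijections between faces of $C$ and faces of $\tilde C$, which is the precise content of the stated equivalence.

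For (c): with $v=\bar Q\tilde v$ as in (ii), for every $y\in C$ one has $\langle v,y-z\rangle=\langle\tilde v,\bar Q^\tr y-\bar Q^\tr z\rangle=\langle\tilde v,\tilde y-\tilde z\rangle$, and $\tilde y=\bar Q^\tr y$ ranges over $\tilde C$ as $y$ ranges over $C$; hence $v\in N_C(z)$ iff $\langle\tilde v,\tilde y-\tilde z\rangle\le 0$ for all $\tilde y\in\tilde C$ iff $\tilde v\in N_{\tilde C}(\tilde z)$. For (d): an $n$-cell of $\mathcal{N}_C$ is $\sigma=F+N_F$ with $F$ a nonempty face of $C$ and $N_F$ the constant normal cone value on $\ri F$, so $\bar Q^\tr\sigma=\bar Q^\tr F+\bar Q^\tr N_F$. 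By (b), $\bar Q^\tr F=\tilde F$ is a nonempty face of $\tilde C$; choosing $z\in\ri F$ gives $\tilde z=\bar Q^\tr z\in\bar Q^\tr(\ri F)=\ri(\bar Q^\tr F)=\ri\tilde F$ since linear images commute with relative interior, and (c) together with $\bar Q^\tr\bar Q=I$ yields $\bar Q^\tr N_F=\bar Q^\tr N_C(z)=N_{\tilde C}(\tilde z)=N_{\tilde F}$; hence $\bar Q^\tr\sigma=\tilde F+N_{\tilde F}$ is an $(n-\ell)$-cell of $\mathcal{N}_{\tilde C}$, and the reverse direction follows symmetrically by pulling $\tilde F$ back to a nonempty face $F$ of $C$ via (b).

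I expect the main obstacle to be stating and proving (b) correctly. Read literally the equivalence fails for an arbitrary set $F$ — e.g.\ when $C=\mathbb{R}^n=\lin C$ one has $\bar Q^\tr F\subseteq\{0\}$, always a face of $\tilde C=\{0\}$, while not every $F$ is a face of $C$ — so the real content is the bijection between faces of $C$ and faces of $\tilde C$ induced by $\bar Q^\tr$, and the proof must identify the inverse map $\tilde F\mapsto C\cap(\bar Q^\tr)^{-1}(\tilde F)$ and verify via fact (i) that it is well defined and inverse to $F\mapsto\bar Q^\tr F$. The remaining bookkeeping — that the $n$-cells of $\mathcal{N}_C$ are indexed exactly by the nonempty faces and that distinct faces yield distinct cells, so that (d) really is a bijection — is routine given Robinson's description of the normal manifold but should be noted.
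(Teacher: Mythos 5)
Your proof is correct and follows essentially the same orthogonal-decomposition strategy as the paper's: split $\mathbb{R}^n$ along $\lin C$ and $(\lin C)^\perp$, use that $\bar{Q}^\tr$ is an isometry of $(\lin C)^\perp$ onto $\mathbb{R}^{n-\ell}$ with $\ker\bar{Q}^\tr = \lin C$, lift convex combinations back to $C$ modulo $\lin C$, and observe $N_C(z)\subseteq(\lin C)^\perp$. Parts (a), (c), and (d) match the paper's argument nearly line by line; your (d) is in fact slightly more explicit than the paper's one-sentence appeal to (b), (c), and the definition of the normal manifold.

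Your flag on (b) is a real issue, not pedantry. The paper's proof of the (if) direction infers $z^i \in F + \lin C$ from $\bar{Q}^\tr z^i \in \bar{Q}^\tr F$ and then concludes ``Therefore $z^i \in F$''; that last step uses the containment $F + \lin C \subseteq F$, which is precisely the property that faces of $C$ (but not arbitrary convex subsets of $C$) enjoy, so invoking it before $F$ has been shown to be a face makes the argument circular. Concretely, the literal biconditional fails in nondegenerate cases: with $C = \mathbb{R}\times[0,1]$ and $F = \{(0,0)\}$, the image $\bar{Q}^\tr F = \{0\}$ is a face of $\tilde{C}=[0,1]$ while $F$ is not a face of $C$. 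Your repair --- reading (b) as the assertion that $F \mapsto \bar{Q}^\tr F$ is a bijection from nonempty faces of $C$ to nonempty faces of $\tilde{C}$, exhibiting the inverse $\tilde{F}\mapsto C\cap(\bar{Q}^\tr)^{-1}(\tilde{F})$, and using the saturation fact $F + \lin C = F$ for faces to check the two maps compose to the identity --- is the correct fix, and it is the form of (b) that the paper's subsequent uses (in particular part (d) and Proposition~\ref{prop:gpath}) actually require.
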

\begin{proof}
We prove in sequence. (a) (only-if) Let $z$ be an implicit extreme point
of $C$. Set $\tilde{z}=\bar{Q}^\tr z$. We prove by contradiction.
Suppose that $\exists \tilde{z}^1,\tilde{z}^2 \in \tilde{C}$ and
$\lambda \in (0,1)$ such that $\tilde{z}=\lambda \tilde{z}^1+(1-\lambda)
\tilde{z}^2$ with $\tilde{z} \neq \tilde{z}^i$ for $i=1,2$. By definition
of $\tilde{C}$, we have $z^1,z^2 \in C$ such that
$\tilde{z}^i=\bar{Q}^\tr z^i$ for $i=1,2$.
As $C=\lin C \oplus ((\lin C)^\perp \cap C)$ \cite[page 65]{rockafellar70}
and $\bar{Q}^\tr z = \bar{Q}^\tr (\lambda z^1 + (1-\lambda)z^2)$,
there exists $a \in \lin C$ such that
$z = \lambda (a+z^1) + (1-\lambda)(a+z^2)$.
As $\bar{Q}^\tr (z- (a+z^i)) = \tilde{z} - \tilde{z}^i \neq 0$,
we have $z-(a+z^i) \notin \lin C$ for $i=1,2$, which contradicts our
assumption that $z$ is an implicit extreme point of $C$.

(if) Using similar proof technique, we can show that for an extreme point
$\tilde{z} \in \tilde{C}$ $z$ is an implicit extreme point of $C$
when $\tilde{z}=\bar{Q}^\tr z$.

(b) (only-if) Let $F$ be a face of $C$. Set $\tilde{F}=\bar{Q}^\tr F$.
Clearly, $\tilde{F}$ is a convex subset of $\tilde{C}$. Let
$\tilde{z}^1,\tilde{z}^2 \in \tilde{C}$ and $\lambda \in (0,1)$
satisfying $\lambda\tilde{z}^1 + (1-\lambda)\tilde{z}^2 \in \tilde{F}$.
From $C=\lin C \oplus ((\lin C)^\perp \cap C)$, we have
$\bar{Q} \tilde{z}^i \in C$ for $i=1,2$. Then
$\bar{Q} (\lambda\tilde{z}^1+(1-\lambda)\tilde{z}^2) \in F$ so that
$\bar{Q} \tilde{z}^1 \in F$ and $\bar{Q} \tilde{z}^2 \in F$. This shows
that $\tilde{z}^i \in \tilde{F}$ for $i=1,2$.

(if) Let $\tilde{F}=\bar{Q}^\tr F$ be a face of $\tilde{C}$. By the 
definition of
$\tilde{F}$, $F$ is a convex subset of $C$. Let $z^1,z^2 \in C$ and
$\lambda \in (0,1)$ such that $\lambda z^1 + (1-\lambda)z^2 \in F$. We have
$\bar{Q}^\tr z^i \in \tilde{C}$ for $i=1,2$ and
$\bar{Q}^\tr (\lambda z^1+(1-\lambda)z^2) \in \tilde{F}$. Thus
$\bar{Q}^\tr z^i \in \tilde{F}$, hence $z^i \in F + \lin C$ for $i=1,2$.
Therefore $z^i \in F$ for $i=1,2$.

(c) For a vector $v \in \mathbb{R}^n$, we represent components of $v$ in
$\lin C$ and $(\lin C)^\perp$ in the basis 
$\begin{bmatrix}Q \quad \bar{Q}\end{bmatrix}$
by $v_Q$ and $v_{\bar{Q}}$, respectively, so that
$v = Qv_Q + \bar{Q} v_{\bar{Q}}$. If either $z \notin C$ or
$\tilde{z} \notin \tilde{C}$, then we have
nothing to prove. Therefore we assume that $z \in C$ and
$\tilde{z} \in \tilde{C}$ in the proof. (only-if) Let $v \in N_C(z)$.
By the definition of the normal cone, for each $a \in \lin C$ we have
$\langle v, (z+a) - z\rangle \le 0$ and
$\langle v, (z-a) - z \rangle \le 0$. Thus $\langle v,a \rangle = 0$ for all
$a \in \lin C$. Thus $N_C(z) \subset (\lin C)^\perp$ so that $v_Q=0$ and
$v=\bar{Q} v_{\bar{Q}}$. We then have
\begin{equation*}
  \begin{aligned}
    0 &\ge \langle v, y - z \rangle, \quad \forall y \in C\\
    &=\langle \bar{Q} v_{\bar{Q}},Qy_Q+\bar{Q} y_{\bar{Q}} -
    (Qz_Q+\bar{Q} z_{\bar{Q}})\rangle\\
    &=\langle \bar{Q} v_{\bar{Q}},\bar{Q} (y_{\bar{Q}} - z_{\bar{Q}})\rangle\\
    &=\langle v_{\bar{Q}},y_{\bar{Q}} - z_{\bar{Q}} \rangle
  \end{aligned}
\end{equation*}
By setting $\tilde{v}=v_{\bar{Q}}$, $v=\bar{Q} \tilde{v}$ and
$\tilde{v} \in N_{\tilde{C}}(\tilde{z})$.

(if) Let $\tilde{v} \in N_{\tilde{C}}(\tilde{z})$ and set $v=\bar{Q} \tilde{v}$.
We have $\tilde{z} = \bar{Q}^\tr z$ if and only if
$z \in \lin C + \bar{Q} \tilde{z}$. Let $z \in \lin C + \bar{Q} \tilde{z}$.
Then
\[
\langle v, y - z \rangle = \langle \tilde{v},y_{\bar{Q}}-z_{\bar{Q}}\rangle
\le 0, \quad y \in C
\]
The result follows.

(d) The result follows from (b), (c), and the definition of the full-dimensional
cells of the normal manifold.
\end{proof}

A similar result holds for the 1-manifold $G_C^{-1}(0)$.

\begin{proposition}
Let an $\AVI(C,q,M)$ problem be given. Suppose that the matrix $M$ is
invertible on the lineality space of $C$, and
$G_C(x^*,t^*) = 0$ where $r \in N_C(\pi_C(x^0))$ for some $x^0 \in \mathbb{R}^n$.
Then the \PL{} function
$\tilde{G}_{\tilde{C}}(\tilde{x},t):=
\tilde{M}\pi_{\tilde{C}}(\tilde{x})+\tilde{q}+\tilde{x}
-\pi_{\tilde{C}}(\tilde{x})-t\tilde{r}$ has value zero at $(\tilde{x}^*,t^*)$,
where
\begin{equation*}
  \begin{aligned}
    \tilde{x}^* &= \bar{Q}^\tr x^*\\
    Z &= \begin{bmatrix} Q && \bar{Q} \end{bmatrix},\\
    \tilde{M} &= (Z^\tr M Z / M_{QQ})=M_{\bar{Q}\bar{Q}} -
    M_{\bar{Q}Q}M_{QQ}^{-1}M_{Q\bar{Q}},\\
    \tilde{C} &= \bar{Q}^\tr C, \, \tilde{x}^0 = \bar{Q}^\tr x^0, \,
    \tilde{q} = (\bar{Q}^\tr - M_{\bar{Q}Q}M_{QQ}^{-1}Q^\tr)q,\\
    \tilde{r} &= \bar{Q}^\tr r \in N_{\tilde{C}}(\pi_{\tilde{C}}(\tilde{x}^0)).
  \end{aligned}
\end{equation*}
Conversely, if $\tilde{G}_{\tilde{C}}(\tilde{x}^*,t^*)=0$ then
$G_C(x^*,t^*)=0$ with $x^*=\bar{Q}\tilde{x}^* + Qy^*$ and
\[
y^* = -M_{QQ}^{-1}\left(M_{Q\bar{Q}}\pi_{\tilde{C}}(\tilde{x}^*) + Q^\tr q\right).
\]
\label{prop:gpath}
\end{proposition}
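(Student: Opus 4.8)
The plan is to reduce the vector equation $G_C(x,t)=0$ to its two orthogonal components along the direct sum $\mathbb{R}^n=\lin C\oplus(\lin C)^\perp$, expressed in the orthonormal basis $Z=\begin{bmatrix}Q&\bar Q\end{bmatrix}$. Since $Z$ is invertible, $G_C(x,t)=0$ holds if and only if both $Q^\tr G_C(x,t)=0$ and $\bar Q^\tr G_C(x,t)=0$; I will show the first of these pins down the $\lin C$-component of $x$ via the formula for $y^*$, while the second is \emph{literally} $\tilde G_{\tilde C}(\bar Q^\tr x,t)=0$. Both the forward and converse implications then fall out at once.

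The one geometric ingredient I need is that the Euclidean projector onto $C$ splits along the decomposition $C=\lin C\oplus(C\cap(\lin C)^\perp)=\lin C\oplus \bar Q\tilde C$ used already in the proof of Proposition~\ref{prop:twokey}: namely $\pi_C(x)=QQ^\tr x+\bar Q\,\pi_{\tilde C}(\bar Q^\tr x)$. This follows by writing any $c\in C$ as $c=Qc_1+\bar Q c_2$ with $c_1$ ranging freely over $\mathbb{R}^\ell$ and $c_2$ over $\tilde C$, and observing $\|x-c\|^2=\|Q^\tr x-c_1\|^2+\|\bar Q^\tr x-c_2\|^2$ by orthonormality, so the minimum is attained at $c_1=Q^\tr x$ and $c_2=\pi_{\tilde C}(\bar Q^\tr x)$. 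As a consequence, $x-\pi_C(x)=\bar Q(\bar Q^\tr x-\pi_{\tilde C}(\bar Q^\tr x))\in\im\bar Q=(\lin C)^\perp$ (consistent with $x-\pi_C(x)\in N_C(\pi_C(x))\subset(\lin C)^\perp$), and likewise $r\in N_C(\pi_C(x^0))\subset(\lin C)^\perp=\im\bar Q$, so $Q^\tr r=0$ and $\tilde r=\bar Q^\tr r$ satisfies $r=\bar Q\tilde r$.

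Now write $z=\pi_C(x)$, $\tilde x=\bar Q^\tr x$, $\tilde z=\pi_{\tilde C}(\tilde x)$, so $z=QQ^\tr x+\bar Q\tilde z$. Left-multiplying $G_C(x,t)=0$ by $Q^\tr$: the term $Q^\tr(x-z)$ vanishes (it lies in $\im\bar Q$), $Q^\tr r=0$, and $Q^\tr Mz=M_{QQ}Q^\tr x+M_{Q\bar Q}\tilde z$, giving $M_{QQ}Q^\tr x+M_{Q\bar Q}\tilde z+Q^\tr q=0$; since $M$ is invertible on $\lin C$, i.e. $M_{QQ}$ is invertible, this is exactly $Q^\tr x=-M_{QQ}^{-1}(M_{Q\bar Q}\tilde z+Q^\tr q)=y^*$. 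Left-multiplying by $\bar Q^\tr$: $\bar Q^\tr(x-z)=\tilde x-\tilde z$, $\bar Q^\tr r=\tilde r$, and $\bar Q^\tr Mz=M_{\bar Q Q}Q^\tr x+M_{\bar Q\bar Q}\tilde z$; substituting $Q^\tr x=y^*$ collapses $M_{\bar Q Q}Q^\tr x+M_{\bar Q\bar Q}\tilde z$ into $(M_{\bar Q\bar Q}-M_{\bar Q Q}M_{QQ}^{-1}M_{Q\bar Q})\tilde z-M_{\bar Q Q}M_{QQ}^{-1}Q^\tr q=\tilde M\tilde z-M_{\bar Q Q}M_{QQ}^{-1}Q^\tr q$, so the $\bar Q^\tr$-component equation becomes $\tilde M\tilde z+\tilde q+\tilde x-\tilde z-t\tilde r=0=\tilde G_{\tilde C}(\tilde x,t)$ with $\tilde q=(\bar Q^\tr-M_{\bar Q Q}M_{QQ}^{-1}Q^\tr)q$, as claimed. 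For the forward direction, set $\tilde x^*=\bar Q^\tr x^*$ and read off $\tilde G_{\tilde C}(\tilde x^*,t^*)=0$; the membership $\tilde r\in N_{\tilde C}(\pi_{\tilde C}(\tilde x^0))$ follows from Proposition~\ref{prop:twokey}(c) together with $\pi_{\tilde C}(\tilde x^0)=\bar Q^\tr\pi_C(x^0)$. For the converse, given $\tilde G_{\tilde C}(\tilde x^*,t^*)=0$ put $x^*=\bar Q\tilde x^*+Qy^*$ with $y^*=-M_{QQ}^{-1}(M_{Q\bar Q}\pi_{\tilde C}(\tilde x^*)+Q^\tr q)$; then $\bar Q^\tr x^*=\tilde x^*$ and $Q^\tr x^*=y^*$ by orthonormality, so both component equations above hold, hence $G_C(x^*,t^*)=0$.

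The only real work is the projector-splitting identity and the bookkeeping to recognize the Schur complement $\tilde M=(Z^\tr M Z/M_{QQ})$ and the reduced vector $\tilde q$ after the substitution $Q^\tr x=y^*$; everything else is just using invertibility of $Z$ and of $M_{QQ}$. I expect the projector identity to be the step most worth writing out carefully, since it is the one place a genuinely non-algebraic fact (the orthogonal direct-sum structure of $C$) enters.
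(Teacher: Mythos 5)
Your proof is correct and follows essentially the same approach as the paper: both decompose $G_C(x,t)=0$ into $Q^\tr$- and $\bar Q^\tr$-components and use the orthogonal splitting of the projector $\pi_C$ along $\lin C\oplus(\lin C)^\perp$, with the $Q^\tr$-component pinning down $y^*$ and the $\bar Q^\tr$-component reproducing $\tilde G_{\tilde C}$. The only cosmetic difference is that you prove the projector-splitting identity from orthonormality directly rather than citing the Cao--Ferris lemma $\bar Q^\tr\pi_C(x)=\pi_{\bar Q^\tr C}(\bar Q^\tr x)$, and you invoke Proposition~\ref{prop:twokey}(c) for the normal-cone membership of $\tilde r$, which is in fact the more directly applicable part of that proposition.
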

\begin{proof}
Let $(x^*,t^*)$ satisfying $G_C(x^*,t^*)=0$ with $r \in N_C(\pi_C(x^0))$ for
some $x^0$ be given. Then
\begin{equation*}
  \begin{aligned}
    &M\pi_C(x^*) + q + x^* - \pi_C(x^*) - t^*r = 0,\\
    (\Rightarrow)\, &
    \begin{bmatrix}
      Q^\tr \\ \bar{Q}^\tr
    \end{bmatrix}
    M
    \begin{bmatrix}
      Q & \bar{Q}
    \end{bmatrix}
    \begin{bmatrix}
      Q^\tr \\ \bar{Q}^\tr
    \end{bmatrix}
    \pi_C(x^*) +
    \begin{bmatrix}
      Q^\tr \\ \bar{Q}^\tr
    \end{bmatrix}
    (q + x^* - \pi_C(x^*) - t^*r) = 0,\\
    (\Rightarrow)\, &
    \begin{bmatrix}
      M_{QQ} & M_{Q\bar{Q}}\\M_{\bar{Q}Q}&M_{\bar{Q}\bar{Q}}
    \end{bmatrix}
    \begin{bmatrix}
      Q^\tr \pi_C(x^*) \\ \bar{Q}^\tr \pi_C(x^*)
    \end{bmatrix}
    +
    \begin{bmatrix}
      Q^\tr q \\ \bar{Q}^\tr (q + x^* - \pi_C(x^*) - t^*r)
    \end{bmatrix}
    = 0,\\
    (\Rightarrow)\, &
    \tilde{M}\bar{Q}^\tr \pi_C(x^*) + \tilde{q} +
    \bar{Q}^\tr (x^* - \pi_C(x^*)) - t^*\tilde{r} = 0,\\
    &\quad \text{ using }
    Q^\tr \pi_C(x^*) =
    -M_{QQ}^{-1}(M_{Q\bar{Q}}\bar{Q}^\tr \pi_C(x^*)+Q^\tr q),\\
    (\Rightarrow)\, &
    \tilde{M}\pi_{\tilde{C}}(\tilde{x}^*) + \tilde{q} + \tilde{x}^*
    - \pi_{\tilde{C}}(\tilde{x}^*) - t^* \tilde{r} = 0.
  \end{aligned}
\end{equation*}
The second $(\Rightarrow)$ holds because $N_C(\pi_C(x^0))\subset(\lin C)^\perp$ .
The last $(\Rightarrow)$ holds because
$\bar{Q}^\tr \pi_C(x)=\pi_{\bar{Q}^\tr C}(\bar{Q}^\tr x)$ by
\cite[Lemma 2.1]{cao95}. Also 
$\tilde{r} \in N_{\tilde{C}}(\pi_{\tilde{C}}(\tilde{x}^0))$
by Proposition~\ref{prop:twokey}(d). 

Conversely, let $\tilde{G}_{\tilde{C}}(\tilde{x}^*,t^*)=0$.
Set $x^*=\bar{Q}\tilde{x}^* + Qy^*$ with $y^*$ as specified in the proposition.
Then
\begin{equation*}
  \begin{aligned}
    \pi_C(x^*) &= \pi_{C \cap (\lin C)^\perp}(x^*) + \pi_{\lin C}(x^*)\\
    &= \bar{Q}\pi_{\tilde{C}}(\tilde{x}^*) + Qy^*
  \end{aligned}
\end{equation*}
Therefore $Q^\tr \pi_C(x^*)= y^*$. By the definition of $y^*$, the converse
directions also hold.
\end{proof}

Note that the $\AVI(\tilde{C},\tilde{q},\tilde{M})$ with
$\tilde{C},\tilde{q}$, and $\tilde{M}$ as in Proposition~\ref{prop:gpath}
is the same exact problem obtained by applying the stage 1 reduction
\cite[page 49]{cao96} to the $\AVI(C,q,M)$.
Also $\tilde{G}_{\tilde{C}}$ is the \PL{} function defined on the 
$(n-\dim(\lin C)+1)$-manifold $\mathcal{M}_{\tilde{C}}$ of $\tilde{C}$ to find
a zero of the normal map associated with the
$\AVI(\tilde{C},\tilde{q},\tilde{M})$.

An implication of Proposition~\ref{prop:gpath} is that if
$G_C(x+\theta\Delta x,t+\theta\Delta t)=0$ and
$(x+\theta\Delta x,t+\theta\Delta t) \in \sigma \times \mathbb{R}_+$
for all $\theta \in [0,\nu]$ for some $\nu > 0$, possibly $\nu = \infty$,
and $\sigma \times \mathbb{R}_+$ is an $(n+1)$-cell of $\mathcal{M}_C$,
then we have 
$\tilde{G}_{\tilde{C}}(\tilde{x}+\theta\Delta\tilde{x},t+\theta\Delta t)=0$
with $(\tilde{x}+\theta\Delta\tilde{x},t+\theta\Delta t)
\in \tilde{\sigma}\times\mathbb{R}_+$ for all
$\theta \in [0,\nu]$, where $\tilde{\sigma}=\bar{Q}^\tr \sigma$ and
$\Delta\tilde{x}=\bar{Q}^\tr \Delta x$. The converse also holds
by setting $\Delta x = \bar{Q}\Delta\tilde{x} + Q\Delta y$ with
$\Delta y=-M_{QQ}^{-1}M_{Q\bar{Q}}H_{\tilde{\sigma}}\Delta \tilde{x}$, where
$H_{\tilde{\sigma}}$ is a matrix representing the projection operator
$\pi_{\tilde{C}}(\cdot)$ on elements of $\tilde{\sigma}$.
Therefore the projection of each piece of $G_C^{-1}(0)$ onto
$\mathcal{M}_{\tilde{C}}$ corresponds to each piece of
$\tilde{G}_{\tilde{C}}^{-1}(0)$ and vice versa. 
As a consequence, if $G^{-1}_C(0)$ contains
a ray, i.e., $\exists (\Delta x, \Delta t) \neq 0$ with $\nu = \infty$ on some
$(n+1)$-cell $\sigma \times \mathbb{R}_+$ of $\mathcal{M}_C$, and
the corresponding value $(\Delta \tilde{x},\Delta t)$ is not zero, then the
corresponding piece of $\tilde{G}^{-1}_{\tilde{C}}(0)$ is also a ray.
The following proposition shows that whenever there is a ray in $G_C^{-1}(0)$
with $\Delta x \neq 0$, then we have $\Delta \tilde{x} \neq 0$ so that
the corresponding piece of $\tilde{G}_{\tilde{C}}^{-1}(0)$ is also a ray.
Note that the converse automatically holds as $\Delta x \neq 0$ for each
$\Delta \tilde{x} \neq 0$.

\begin{proposition}
For an $\AVI(C,q,M)$, suppose that \PATHAVI{} generates $G^{-1}_C(0)$
with a ray start at an implicit extreme point. 
For each ray in $G_C^{-1}(0)$ in the direction of $(\Delta x,\Delta t) \neq 0$,
if $\Delta x \neq 0$ 
then $\Delta \tilde{x} := \bar{Q}^\tr \Delta x$ is a ray in
$\tilde{G}_{\tilde{C}}^{-1}(0)$ that is nonzero
under the assumption that either $0$ is a regular value or we do
lexicographic pivoting.
\label{prop:gpath-nolin}
\end{proposition}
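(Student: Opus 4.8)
The plan is to argue by contradiction, reducing the whole statement to the single claim that $\Delta\tilde x:=\bar Q^\tr\Delta x\neq 0$. Indeed, once $\Delta\tilde x\neq 0$ is known, the pair $(\Delta\tilde x,\Delta t)$ is nonzero, so by Proposition~\ref{prop:gpath} together with the discussion preceding this statement, the piece of $\tilde G_{\tilde C}^{-1}(0)$ that corresponds to the given ray of $G_C^{-1}(0)$ is itself a ray. So suppose instead $\bar Q^\tr\Delta x=0$. Since the columns of $\bar Q$ span $(\lin C)^\perp$, this forces $\Delta x\in\lin C$, and by hypothesis $\Delta x\neq 0$; I will contradict the assumption that $M$ is invertible on $\lin C$, i.e. that $M_{QQ}=Q^\tr MQ$ is invertible.

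For the argument I would first pin down the cell the ray lives in. Under the stated assumption (either $0$ is a regular value or lexicographic pivoting is used), the ray generated by \PATHAVI{} is a well-defined half-line $\{(x,t)+\theta(\Delta x,\Delta t):\theta\ge 0\}\subseteq G_C^{-1}(0)$ lying in a single $(n+1)$-cell $\sigma\times\R_+$ of $\mathcal M_C$, with $\sigma=F+N_F$ for a face $F$ of $C$ and $N_F$ the value of the normal cone on $\ri F$. On $\sigma$ the projector $\pi_C$ is the orthogonal projection onto $\aff F$ (write $x=f+n$ with $f\in F$ and $n\in N_F\subseteq(\pspace F)^\perp$, so $x-f\perp\pspace F$); hence, by \cite[Proposition~2.5]{robinson92}, the affine map to which $M_C$ restricts on $\sigma$ has linear part $L_F:=MP_F+(I-P_F)$, where $P_F$ is orthogonal projection onto $\pspace F$. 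Since $G_C(x+\theta\Delta x,t+\theta\Delta t)=0$ for all $\theta\ge 0$ and the half-line stays in $\sigma\times\R_+$, comparing the terms linear in $\theta$ yields the ray equation
\begin{equation*}
L_F\,\Delta x=\Delta t\,r .
\end{equation*}
Now $\lin C\subseteq\pspace F$ for every face $F$ (if $d\in\lin C$ and $f\in F$ then $f\pm d\in C$, and $f=\tfrac12(f+d)+\tfrac12(f-d)$ with $f\in F$ forces $f\pm d\in F$), so $P_F\Delta x=\Delta x$ and $L_F\Delta x=M\Delta x+(\Delta x-\Delta x)=M\Delta x$; thus $M\Delta x=\Delta t\,r$. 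Because $r\in N_C(\pi_C(x^0))\subseteq(\lin C)^\perp$ (shown inside the proof of Proposition~\ref{prop:twokey}(c)), we have $Q^\tr r=0$. Writing $\Delta x=Q\alpha$ with $\alpha\neq 0$ and left-multiplying by $Q^\tr$ gives $M_{QQ}\,\alpha=Q^\tr MQ\,\alpha=\Delta t\,Q^\tr r=0$; invertibility of $M_{QQ}$ then forces $\alpha=0$, contradicting $\Delta x\neq 0$. Hence $\Delta\tilde x\neq 0$.

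I do not expect a real obstacle here. The only delicate point is the justification of the ray equation — that the whole half-line sits in one $(n+1)$-cell on which $G_C$ is affine — and this is exactly what the regularity / lexicographic-pivoting hypothesis secures (and what the paragraph preceding the statement already arranges). Once $M\Delta x=\Delta t\,r$ is in hand, projecting onto $\lin C$ kills the right-hand side irrespective of the sign (or vanishing) of $\Delta t$, so no case analysis is needed and invertibility of $M_{QQ}$ closes the argument in one line.
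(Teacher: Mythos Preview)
Your proof is correct but follows a genuinely different route from the paper. The paper's argument is combinatorial/topological: it treats the starting ray separately (there $\Delta x\in N_C(z)\subseteq(\lin C)^\perp$ directly), and for a secondary ray argues that if $\Delta x\in\lin C$ then, since $\lin C\subseteq\lin\sigma^k$ for the \emph{previous} cell $\sigma^k$, the entire ray emanating from $x^{k+1}$ also lies in $\sigma^k$. This contradicts neatness of the 1-manifold $G_C^{-1}(0)$ in $\mathcal{M}_C$ (Eaves, Theorem~9.1 / Lemma~15.5): the piece of the path in $\sigma^k\times\R_+$ must be the intersection with a single line, but it already contains the segment arriving at $x^{k+1}$ from a different direction.

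Your approach is instead purely algebraic: you extract the ray equation $L_F\,\Delta x=\Delta t\,r$ on the current cell, use $\lin C\subseteq\pspace F$ to reduce it to $M\Delta x=\Delta t\,r$, and then project onto $\lin C$ to kill the right-hand side and invoke invertibility of $M_{QQ}$. This buys you a uniform treatment of the starting and secondary rays and avoids any appeal to Eaves' PL 1-manifold machinery. The trade-off is that you use invertibility of $M$ on $\lin C$, which the paper's proof of this particular proposition does not; however, since $\tilde G_{\tilde C}$ is only defined (via Proposition~\ref{prop:gpath}) under that invertibility hypothesis, your reliance on it is entirely legitimate in context.
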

\begin{proof}
Let $z$ be an implicit extreme point at which \PATHAVI{} performs a ray start.
By construction, $\Delta \tilde{x}=0$ if and only if $\Delta x \in \lin C$.
For the starting ray we have $\Delta x \in N_C(z)$ so that
$\Delta x \notin \lin C$ by Proposition~\ref{prop:normal-cone-perp}.
Thus $\Delta \tilde{x} \neq 0$. 

We now assume that there is a ray in $G^{-1}_C(0)$ other than the starting ray.
Suppose that $\Delta x \in \lin C$. We prove by contradiction.
Let us assume that the ray is generated at the $(k+1)$th iteration of
complementary pivoting, and it starts from $x^{k+1}$. We know that
$x^{k+1} \in \sigma^{k+1} \times \mathbb{R}_+$ and
$x^{k+1} \in \sigma^k \times \mathbb{R}_+$, where $\sigma^k \times \mathbb{R}_+$
is the $(n+1)$-cell of $\mathcal{M}_C$ \PATHAVI{} passes through at the
$k$th complementary pivoting iteration.
As $\lin C \subset \lin \sigma$ for each $(n+1)$-cell
$\sigma \times \mathbb{R}_+$ of $\mathcal{M}_C$, 
$x^{k+1} + \theta \Delta x \in \sigma^k$ for all $\theta \ge 0$.
This contradicts the fact that $G^{-1}_C(0)$ is a 1-manifold
neat in $\mathcal{M}_C$ \cite[Theorem 9.1 or Lemma 15.5]{eaves76}, that is,
$G^{-1}(0) \cap (\sigma^k \times \mathbb{R}_+)$ must be expressed as an 
intersection of $\sigma^k \times \mathbb{R}_+$ with a line.
Therefore $\Delta x \notin \lin C$. The result follows.
\end{proof}
From Lemma~\ref{lem:aux_var_cst}, if $M$ is semimonotone with respect to $\rec C$
and invertible on $\lin C$,
we have $\Delta t = 0$ whenever \PATHAVI{} generates a ray
in the direction of $(\Delta x, \Delta t)$. These classes include
the $L$-matrix class and the new matrix classes defined in
 Section~\ref{sec:more-processability}. Therefore whenever
\PATHAVI{} generates a ray in $G_C^{-1}(0)$ for those classes of matrices
the corresponding piece in $\tilde{G}^{-1}_{\tilde{C}}(0)$ is also a ray by
Proposition~\ref{prop:gpath-nolin}.

With Propositions~\ref{prop:twokey}--\ref{prop:gpath-nolin},
we finally prove that \PATHAVI{} can process $L$-matrices as defined in
\cite[Theorem 4.4]{cao96}. In contrast to \cite{cao96}, the following
proof does not apply any reduction to achieve the result. 

\begin{theorem}
Suppose that $C$ is a polyhedral convex set, and $M$ is an $L$-matrix with
respect to $\rec C$ which is invertible on the lineality space of $C$. Then
exactly one of the following occurs:
\begin{itemize}
\item[$\bullet$] \PATHAVI{} solves the $\AVI(C,q,M)$.
\item[$\bullet$] The following system has no solution
  \begin{equation*}
    Mz + q \in (\rec C)^D.
  \end{equation*}
\end{itemize}
\label{thm:class}
\end{theorem}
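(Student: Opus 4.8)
The plan is to reduce the statement to Theorem~\ref{thm:cao} applied to the reduced problem $\AVI(\tilde C,\tilde q,\tilde M)$ of Proposition~\ref{prop:gpath} --- which, as remarked after that proposition, is exactly the stage~1 reduction of $\AVI(C,q,M)$ in the sense of \cite{cao96} --- by transporting the path that \PATHAVI{} traces in $\R^n$ to the path that the method of \cite{cao96} traces in the reduced space $\R^{n-\ell}$, where $\ell=\dim(\lin C)$. \PATHAVI{} always terminates with one of two outcomes: the auxiliary variable $t$ reaches $0$, or a secondary ray is generated. I will argue that the first outcome gives the first alternative of the theorem, and that, via the reduction and Theorem~\ref{thm:cao}, the second outcome forces the second alternative; mutual exclusivity of the two alternatives is then a short separate observation.

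First I would set up the ray start. Since $M$ is an $\Lmat$ with respect to $\rec C$, it is in particular semimonotone with respect to $\rec C$; using also the hypothesis that $M$ is invertible on $\lin C$, Propositions~\ref{prop:regular} and~\ref{prop:existence}, together with the constructive verification in Section~\ref{sec:computing} of the third condition of Proposition~\ref{prop:sufficient}, furnish an implicit extreme point $\bar z$ of $C$ satisfying all three conditions of Proposition~\ref{prop:sufficient}, so \PATHAVI{} may perform a ray start at $\bar z$, which it does using lexicographic pivoting. Running \PATHAVI{} then produces the $1$-manifold $G_C^{-1}(0)\subset\mathcal M_C$; since $\mathcal M_C$ has finitely many $(n+1)$-cells and none is revisited, the algorithm stops either with $t=0$ at some point $x^*$, in which case $\pi_C(x^*)$ solves $\AVI(C,q,M)$ and the first alternative holds, or with a secondary ray. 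By Propositions~\ref{prop:twokey} and~\ref{prop:gpath}, the map $x\mapsto\bar Q^\tr x$ sends $G_C^{-1}(0)$ piece-by-piece onto $\tilde G_{\tilde C}^{-1}(0)$, sends the implicit extreme point $\bar z$ to an extreme point $\tilde z=\bar Q^\tr\bar z$ of $\tilde C$, and sends a covering vector $r\in\ri(N_C(\bar z))$ to a covering vector $\tilde r=\bar Q^\tr r\in\ri(N_{\tilde C}(\tilde z))$ (since $\bar Q$ restricts to an isometry of $\R^{n-\ell}$ onto $(\lin C)^\perp$ and $N_C(\bar z)=\bar Q\,N_{\tilde C}(\tilde z)\subset(\lin C)^\perp$). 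Hence $\tilde G_{\tilde C}^{-1}(0)$, traversed from the ray start at $\tilde z$, is precisely the path the method of \cite{cao96} follows on $\AVI(\tilde C,\tilde q,\tilde M)$, whose feasible set $\tilde C$ has trivial lineality space.

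It remains to handle the secondary-ray outcome; say the secondary ray has direction $(\Delta x,\Delta t)\neq0$. Because $M$ is semimonotone with respect to $\rec C$ and invertible on $\lin C$, Lemma~\ref{lem:aux_var_cst} gives $\Delta t=0$, so $\Delta x\neq0$, and Proposition~\ref{prop:gpath-nolin} then yields $\Delta\tilde x=\bar Q^\tr\Delta x\neq0$; thus the corresponding piece of $\tilde G_{\tilde C}^{-1}(0)$ is itself a secondary ray, and since the $t$-coordinate is unchanged under the projection the reduced path never reaches $t=0$, so the method of \cite{cao96} does not solve $\AVI(\tilde C,\tilde q,\tilde M)$. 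The stage~1 reduction of \cite{cao96} preserves the $\Lmat$ property, so $\tilde M=(Z^\tr MZ/M_{QQ})$ is an $\Lmat$ with respect to $\rec\tilde C=\bar Q^\tr\rec C$, which is pointed; Theorem~\ref{thm:cao} therefore applies to $\AVI(\tilde C,\tilde q,\tilde M)$, and since its first alternative fails, its second holds: $\tilde M\tilde z+\tilde q\in(\rec\tilde C)^D$ has no solution. Finally I would record the equivalence of the two feasibility systems: $\lin C\subset\rec C$ gives $(\rec C)^D\subset(\lin C)^\perp$ and in fact $(\rec C)^D=\bar Q\,(\rec\tilde C)^D$, and, mirroring the Schur-complement computation of Proposition~\ref{prop:gpath} --- in which $Q^\tr(Mz+q)=0$ forces $Q^\tr z=-M_{QQ}^{-1}(M_{Q\bar Q}\bar Q^\tr z+Q^\tr q)$ and then $\bar Q^\tr(Mz+q)=\tilde M\bar Q^\tr z+\tilde q$ --- this shows that $Mz+q\in(\rec C)^D$ is solvable if and only if $\tilde M\tilde z+\tilde q\in(\rec\tilde C)^D$ is. Hence the second alternative holds. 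The two alternatives are mutually exclusive: if $z^*$ solves $\AVI(C,q,M)$ then $z^*+d\in C$ for every $d\in\rec C$, whence $\langle Mz^*+q,d\rangle\ge0$, i.e.\ $Mz^*+q\in(\rec C)^D$; so exactly one of the two occurs.

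The step I expect to be the main obstacle is the last paragraph: confirming that the \cite{cao96} stage~1 reduction keeps $\tilde M$ an $\Lmat$ with respect to $\rec\tilde C$ --- so that Theorem~\ref{thm:cao} is applicable to the reduced problem at all --- and that the feasibility systems $Mz+q\in(\rec C)^D$ and $\tilde M\tilde z+\tilde q\in(\rec\tilde C)^D$ are equivalent. Both are properties of that reduction already established in \cite{cao96}, so the genuinely new content is the original-space path correspondence provided by Propositions~\ref{prop:gpath}--\ref{prop:gpath-nolin}, together with the observation, via Lemma~\ref{lem:aux_var_cst}, that any secondary ray \PATHAVI{} produces has $\Delta t=0$ and hence projects to a genuine (nonzero) secondary ray of the reduced path rather than collapsing into the lineality space.
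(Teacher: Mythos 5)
Your proposal is correct and follows essentially the same route as the paper: perform a ray start at an implicit extreme point, transport the \PATHAVI{} path to the reduced space via Propositions~\ref{prop:twokey}--\ref{prop:gpath-nolin}, and invoke Theorem~\ref{thm:cao} on $\AVI(\tilde C,\tilde q,\tilde M)$. You are slightly more explicit than the paper's terse proof on two points it leaves tacit --- the equivalence of the original and reduced feasibility systems $Mz+q\in(\rec C)^D$ and $\tilde M\tilde z+\tilde q\in(\rec\tilde C)^D$, and the mutual exclusivity of the two alternatives --- but these are the same facts the paper implicitly relies on when it writes down system~\eqref{eq:sys_Deltas} and applies Theorem~\ref{thm:cao}.
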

\begin{proof}
By Propositions~\ref{prop:twokey}--\ref{prop:gpath},
for a 1-manifold $G_C^{-1}(0)$ generated by
\PATHAVI{} there corresponds to a 1-manifold $\tilde{G}_{\tilde{C}}^{-1}(0)$
in the reduced space generated by the same pivotal method with a ray start
at an extreme point of $\tilde{C}$ with $\tilde{M}$ an $L$-matrix with
respect to $\rec \tilde{C}$. If there is a secondary ray in $G_C^{-1}(0)$,
then so is in $\tilde{G}_{\tilde{C}}^{-1}(0)$ by
Proposition~\ref{prop:gpath-nolin}.
Therefore there exists directions
$(\Delta\tilde{x},\Delta\tilde{z},\Delta\tilde{\lambda},\Delta\tilde{s},
\Delta t)$ in the reduced space satisfying
\begin{equation}
  \begin{aligned}
   \Delta \tilde{x} - \Delta \tilde{z} &= -\tilde{M}\Delta \tilde{z} + r\Delta t,\\
    A_{\mathcal{A}\bullet}\Delta\tilde{z} &= 0,\\
    A_{\bar{\mathcal{A}}\bullet}\Delta\tilde{z}-\Delta\tilde{s}_{\bar{\mathcal{A}}}
    &= 0,\\
    \Delta\tilde{x}-\Delta\tilde{z} &= 
    -A^\tr_{\mathcal{A}\bullet}\Delta\tilde{\lambda}_{\mathcal{A}}
   \end{aligned}\label{eq:sys_Deltas}
\end{equation}
where we have included bound constraints in the matrix $A$ for clarity, and
$\mathcal{A}$ and $\bar{\mathcal{A}}$ denote the active and inactive sets,
respectively.
We then apply Theorem \ref{thm:cao} to \eqref{eq:sys_Deltas}
to get the desired result.
\end{proof}

\subsection{Additional processability results}
\label{sec:more-processability}

Let us now extend the classes of \AVI{}s that \PATHAVI{} is able to process.
The results in 
Lemmas~\ref{lem:pathAVIext_Lmat}--\ref{lem:pathAVIext_copos} 
consider the structure of the whole \AVI{}, not only $M$ and $C$.
As stated previously, a 1-manifold generated in $\mathcal{M}_C$ corresponds to another one in $\mathcal{M}_{\reds{C}}$.
Hence, in the following we denote by \AVI{}$(\reds{C},\reds{q}, \reds{M})$ the \AVI{} corresponding to \AVI{}$(C, q, M)$
with the lineality space projected out. If $M$ is invertible on $\lin C$,
the results can then be applied to original \AVI{} by noting that the projections of the directions of
the rays on $G_C^{-1}(0)$ are solution to the system of equations~\eqref{eq:sys_Deltas} in the reduced space.

In Section~\ref{subsec:FCP}, we present a friction contact problem
where Theorem~\ref{thm:class} cannot be applied but the following lemma can.

\begin{lemma}\label{lem:pathAVIext_Lmat}
 Consider an \AVI{}$(\reds{C}, \reds{q}, \reds{M})$ with $\lin \reds{C} = \{0\}$. Suppose that $\reds{M}$ is semimonotone with respect to $\rec \reds{C}$
and that for any solution $z\neq0$ of the problem
\begin{equation}
 z\in \rec \reds{C},\quad \reds{M} z\in\DUAL{(\rec \reds{C})},\quad z^\tr \reds{M}z = 0,\label{eq:fc_vi:Lm_b2bis}
\end{equation}
it holds that
\begin{equation}
 z^\tr(\reds{M}z'+ \reds{q})\geq0, \quad\forall z'\in \reds{C}. \label{eq:fc_vi:pathAVIext_cond}
\end{equation}
Then \PATHAVI{} solves the \AVI{}$(\reds{C}, \reds{q}, \reds{M})$.
\end{lemma}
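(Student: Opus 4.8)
The plan is to run \PATHAVI{} with a ray start at an extreme point of $\reds{C}$ and to show that, under the stated hypotheses, the run cannot terminate with a secondary ray, so it must return a solution. Since $\reds{C}$ is a nonempty polyhedron with $\lin\reds{C}=\{0\}$, it has an extreme point $z^0$, and there the three conditions of Proposition~\ref{prop:sufficient} hold: the first because $\aff(N_{\reds{C}}(z^0))=(\lin\reds{C})^\perp$ is the whole space (Proposition~\ref{prop:normal-cone-perp}), the second because $\reds{M}$ is trivially invertible on $\{0\}$ (Proposition~\ref{prop:regular}), and the third by Proposition~\ref{prop:comp}. Hence the ray start is legitimate; as $\mathcal{M}_{\reds{C}}$ has finitely many cells, \PATHAVI{} stops either at $t=0$, producing a solution of the \AVI{}, or with a secondary ray, and it suffices to exclude the latter.

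So I would assume a secondary ray is generated, lying in a cell $\sigma\times\mathbb{R}_+$ of $\mathcal{M}_{\reds{C}}$ with $\sigma=F+N_F$ for a face $F$ of $\reds{C}$, based at $(x^*,t^*)$ with nonzero direction $(\Delta x,\Delta t)$. Decompose $x^*=z^*+v^*$ with $z^*=\pi_{\reds{C}}(x^*)\in F$ and $v^*\in N_F$, and the direction as $\Delta x=\Delta z+\Delta v$ with $\Delta z\in\rec F$ and $\Delta v\in N_F$. Then $\tilde{G}_{\reds{C}}=0$ along the ray yields the base-point identity $\reds{M}z^*+\reds{q}+v^*-t^*r=0$ together with the direction relation $\reds{M}\Delta z+\Delta v=(\Delta t)r$, which (with its accompanying sign conditions) is exactly system~\eqref{eq:sys_Deltas}. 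Since $\reds{M}$ is semimonotone with respect to $\rec\reds{C}$ and invertible on $\lin\reds{C}=\{0\}$, Lemma~\ref{lem:aux_var_cst} forces $\Delta t=0$; hence $\reds{M}\Delta z=-\Delta v$ and $\Delta x=(I-\reds{M})\Delta z$, so $\Delta z\neq0$ (otherwise the whole direction would vanish). Using $\rec F\subseteq\rec\reds{C}$, $\rec F\subseteq\pspace F$ and $N_F\subseteq(\pspace F)^\perp$ (\cite[Proposition 2.1]{robinson92}), and $N_F\subseteq\POLAR{(\rec\reds{C})}$, one reads off $\Delta z\in\rec\reds{C}$, $\reds{M}\Delta z=-\Delta v\in\DUAL{(\rec\reds{C})}$, and $\langle\Delta z,\reds{M}\Delta z\rangle=-\langle\Delta z,\Delta v\rangle=0$. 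Thus $\Delta z$ is a nonzero solution of~\eqref{eq:fc_vi:Lm_b2bis}.

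Next I would apply the hypothesis~\eqref{eq:fc_vi:pathAVIext_cond} with $z'=z^*\in\reds{C}$, obtaining $\langle\Delta z,\reds{M}z^*+\reds{q}\rangle\ge0$. Pairing the base-point identity with $\Delta z$ and using $\langle\Delta z,v^*\rangle=0$ (again $\rec F\subseteq\pspace F$ and $N_F\subseteq(\pspace F)^\perp$) gives $\langle\Delta z,\reds{M}z^*+\reds{q}\rangle=t^*\langle\Delta z,r\rangle$, hence $t^*\langle\Delta z,r\rangle\ge0$. But the covering vector satisfies $r\in\ri(N_{\reds{C}}(z^0))=\itr(N_{\reds{C}}(z^0))$, a full-dimensional cone, while $\Delta z\in\rec\reds{C}\subseteq\POLAR{(N_{\reds{C}}(z^0))}$ with $\Delta z\neq0$; the interiority of $r$ then forces $\langle\Delta z,r\rangle<0$. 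Therefore $t^*\le0$, so $t^*=0$, and then $\tilde{G}_{\reds{C}}(x^*,0)=0$, i.e.\ $z^*=\pi_{\reds{C}}(x^*)$ solves the \AVI{} — contradicting the assumption that the run ended in a secondary ray. Hence no secondary ray is generated and \PATHAVI{} solves \AVI{}$(\reds{C},\reds{q},\reds{M})$.

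The step I expect to be the main obstacle is the middle one: extracting from the secondary ray a \emph{nonzero} solution of the homogeneous complementarity problem~\eqref{eq:fc_vi:Lm_b2bis}. This needs Lemma~\ref{lem:aux_var_cst} (through semimonotonicity) to eliminate $\Delta t$, careful bookkeeping of the polarity relations among $\rec F$, $N_F$ and $\rec\reds{C}$ on the active cell — equivalently, reading the implicit sign conditions out of~\eqref{eq:sys_Deltas} — and the observation that the covering vector, being in the relative interior of the full-dimensional normal cone at the starting extreme point, pairs strictly negatively with every nonzero recession direction of $\reds{C}$. Once $\Delta z$ is available as such a solution, substituting the base-point identity into~\eqref{eq:fc_vi:pathAVIext_cond} and concluding $t^*=0$ is short.
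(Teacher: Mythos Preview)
Your proof is correct and follows essentially the same approach as the paper's: both assume a secondary ray, invoke Lemma~\ref{lem:aux_var_cst} (via semimonotonicity) to obtain $\Delta t=0$ and a nonzero $\Delta z$ solving~\eqref{eq:fc_vi:Lm_b2bis}, and then pair $\Delta z$ with the equation $\reds{G}_{\reds{C}}=0$ at a point on the ray, using hypothesis~\eqref{eq:fc_vi:pathAVIext_cond} together with the strict sign of $\langle\Delta z,r\rangle$ coming from $r\in\itr N_{\reds{C}}(z^0)$. The only cosmetic difference is in the final step: the paper asserts directly that the three terms in $\langle\Delta z,\reds{G}_{\reds{C}}(x^{k+1},t^k)\rangle$ are nonnegative with one strictly positive (so the sum cannot be zero), whereas you isolate the identity $\langle\Delta z,\reds{M}z^*+\reds{q}\rangle=t^*\langle\Delta z,r\rangle$ and conclude $t^*\le0$; your explicit use of $\langle\Delta z,v^*\rangle=0$ via $\rec F\subseteq\pspace F\perp N_F$ is in fact a cleaner justification of the middle term than the paper's appeal to ``the normal cone definition.''
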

\begin{proof}
  The pivotal method used in \PATHAVI{} fails if an unbounded ray is generated at some iterate $(\MYK{x}, \MYK{t})$, $k>0$.
  Now suppose that the method generates an unbounded ray.
  From Lemma~\ref{lem:aux_var_cst} we know that $\Delta t =0$, and $\Delta z \neq 0$ is a solution to~\eqref{eq:fc_vi:Lm_b2bis}.
  This means that for any point $\MYKp{x}$ on the ray, we have $\reds{G}_\reds{C}(\MYKp{x}, \MYK{t}) = 0$, implying that
\begin{equation}
 \langle \Delta z, \reds{G}_{\reds{C}}(\MYKp{x}, \MYK{t})\rangle = \langle \Delta z, \reds{M}\MYKp{z} + \reds{q}\rangle + \langle \Delta z, \MYKp{x} - \MYKp{z}\rangle + \langle \Delta z, -\MYK{t} r\rangle = 0.
\end{equation}
 The first term is non-negative by our assumption, as well as the second one by the normal cone definition.
 The third one is strictly positive since $-\MYK{t} r\in\intS \DUAL{(\rec \reds{C})}$. Hence we reached a contradiction.
\end{proof}
An additional property on $\tilde{M}$ allows easier checking of the
conditions \eqref{eq:fc_vi:pathAVIext_cond} of Lemma \ref{lem:pathAVIext_Lmat}.
\begin{corollary}
 If for any solution $z$ of~\eqref{eq:fc_vi:Lm_b2bis} we have $\langle z', \reds{M}^\tr z\rangle \geq 0$, for all $z'\in \reds{C}$, then the condition~\eqref{eq:fc_vi:pathAVIext_cond}
 reduces to $z^\tr \reds{q}\geq0$ whenever $z$ is a solution to \eqref{eq:fc_vi:Lm_b2bis}.
\end{corollary}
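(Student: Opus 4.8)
The plan is to collapse the universally-quantified inequality~\eqref{eq:fc_vi:pathAVIext_cond} using a single algebraic identity. First I would fix an arbitrary solution $z$ of~\eqref{eq:fc_vi:Lm_b2bis}; the case $z = 0$ is vacuous, since then both~\eqref{eq:fc_vi:pathAVIext_cond} and $z^\tr\reds{q}\ge0$ read $0\ge0$, so I may assume $z\neq0$. For every $z'\in\reds{C}$ I would split
\[
 z^\tr(\reds{M}z' + \reds{q}) = \langle z', \reds{M}^\tr z\rangle + z^\tr\reds{q},
\]
and invoke the hypothesis of the corollary, namely $\langle z', \reds{M}^\tr z\rangle\ge0$ for all $z'\in\reds{C}$. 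This bounds the right-hand side below by $z^\tr\reds{q}$ uniformly in $z'$, so $z^\tr\reds{q}\ge0$ already implies $z^\tr(\reds{M}z'+\reds{q})\ge0$ for all $z'\in\reds{C}$, which is exactly~\eqref{eq:fc_vi:pathAVIext_cond}. Since it is precisely~\eqref{eq:fc_vi:pathAVIext_cond} that the proof of Lemma~\ref{lem:pathAVIext_Lmat} invokes, this shows the scalar test $z^\tr\reds{q}\ge0$ may be substituted there, giving the easier-to-check condition promised by the corollary.

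For the opposite implication --- so that~\eqref{eq:fc_vi:pathAVIext_cond} genuinely reduces to $z^\tr\reds{q}\ge0$ --- I would reuse the same decomposition: assuming~\eqref{eq:fc_vi:pathAVIext_cond}, for any $z'\in\reds{C}$ (such a $z'$ exists as $\reds{C}\neq\emptyset$) one gets $z^\tr\reds{q}\ge -\langle z',\reds{M}^\tr z\rangle$, and, since $z\in\rec\reds{C}$ makes $z^\tr\reds{M}z=0$ so that $\langle\cdot,\reds{M}^\tr z\rangle$ is constant along recession directions, taking the infimum of $-\langle z',\reds{M}^\tr z\rangle$ over $z'\in\reds{C}$ yields $z^\tr\reds{q}\ge0$ whenever that infimum is $0$ --- in particular when $0\in\reds{C}$, where one simply plugs $z'=0$. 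I expect this last point to be the only subtlety worth flagging: the forward direction, which is all Lemma~\ref{lem:pathAVIext_Lmat} needs, is a one-line computation, whereas the exact equivalence requires a mild normalization of $\reds{C}$ (e.g.\ that it contains the origin), so I would phrase the corollary's ``reduces to'' as the sufficient direction and add a one-sentence remark on when the converse is available.
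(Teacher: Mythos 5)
Your forward direction is exactly the intended argument: the identity $z^\tr(\reds{M}z' + \reds{q}) = \langle z', \reds{M}^\tr z\rangle + z^\tr\reds{q}$, together with the corollary's hypothesis $\langle z',\reds{M}^\tr z\rangle\ge0$ for all $z'\in\reds{C}$, shows immediately that $z^\tr\reds{q}\ge0$ implies~\eqref{eq:fc_vi:pathAVIext_cond}, which is all that Lemma~\ref{lem:pathAVIext_Lmat} requires; the paper gives no separate proof and ``reduces to'' is meant in precisely this one-sided sense, so your reading and your one-line computation match the intent. Your caveat on the converse is reasonable but contains a small sign slip: from~\eqref{eq:fc_vi:pathAVIext_cond} you get $z^\tr\reds{q}\ge -\langle z',\reds{M}^\tr z\rangle$ uniformly in $z'\in\reds{C}$, so the quantity that must vanish is $\sup_{z'\in\reds{C}}\bigl(-\langle z',\reds{M}^\tr z\rangle\bigr)$, equivalently $\inf_{z'\in\reds{C}}\langle z',\reds{M}^\tr z\rangle$, not the infimum of $-\langle z',\reds{M}^\tr z\rangle$; the example $0\in\reds{C}$ correctly makes this supremum zero (plug $z'=0$ directly), so the conclusion of your remark still stands.
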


We introduce an additional problem class \PATHAVI{} can process.
\begin{lemma}\label{lem:pathAVIext_copos}
 Consider an \AVI{}$(\reds{C}, \reds{q}, \reds{M})$ with $\lin \reds{C} = \{0\}$. Suppose that $\reds{C}$ is a proper cone, $\reds{M}$ is copositive with respect to $\reds{C}$
 and that the following implication holds:
\begin{equation}
 z\in \rec \reds{C},\quad \reds{M} z\in\DUAL{(\rec \reds{C})},\quad z^\tr \reds{M}z = 0\quad\Rightarrow\quad z^\tr \reds{q} \geq 0.\label{eq:fc_vi:q_cond}
\end{equation}
Then the \AVI{}$(\reds{C}, \reds{q}, \reds{M})$ has a solution and \PATHAVI{} finds it.
\end{lemma}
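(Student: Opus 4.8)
The strategy mirrors the proof of Lemma~\ref{lem:pathAVIext_Lmat}: I would show that \PATHAVI{} cannot generate an unbounded ray, so it must terminate with $t=0$, producing a zero of the normal map and hence a solution of $\AVI(\reds{C},\reds{q},\reds{M})$; solvability is a byproduct of this termination. Throughout, $\reds{C}$ being a proper cone gives $\rec\reds{C}=\reds{C}$, $\lin\reds{C}=\{0\}$, and $\DUAL{\reds{C}}$ proper, so $\intS\DUAL{\reds{C}}\neq\emptyset$; since the only extreme point of a pointed cone is its apex, \PATHAVI{} performs its ray start at $0$ with $r\in\ri N_{\reds{C}}(0)=-\intS\DUAL{\reds{C}}$, hence $-t\,r\in\intS\DUAL{\reds{C}}$ for $t>0$, exactly as used in the proof of Lemma~\ref{lem:pathAVIext_Lmat}.

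I would first record two facts. \emph{(i)} $\reds{M}$ is semimonotone with respect to $\reds{C}$: if $z$ solves the generalized complementarity problem for some $q_{0}\in\ri\DUAL{\reds{C}}=\intS\DUAL{\reds{C}}$, then $z^{\tr}\reds{M}z=-z^{\tr}q_{0}\le 0$, which together with copositivity forces $z^{\tr}q_{0}=0$; as $q_{0}$ is a strictly positive functional on the proper cone $\reds{C}$, this gives $z=0$. Thus, by Lemma~\ref{lem:aux_var_cst}, any ray \PATHAVI{} generates has $\Delta t=0$ and a nonzero $z$-direction $\Delta z$ solving~\eqref{eq:fc_vi:Lm_b2bis}: $\Delta z\in\reds{C}$, $\reds{M}\Delta z\in\DUAL{\reds{C}}$, $\Delta z^{\tr}\reds{M}\Delta z=0$. \emph{(ii)} A copositivity identity: for each $w\in\reds{C}$ the quadratic $\lambda\mapsto(\Delta z+\lambda w)^{\tr}\reds{M}(\Delta z+\lambda w)$ is nonnegative on $[0,\infty)$ and vanishes at $0$, so its right derivative at $0$ is nonnegative, i.e. $\langle(\reds{M}+\reds{M}^{\tr})\Delta z,w\rangle\ge 0$ for all $w\in\reds{C}$; hence $(\reds{M}+\reds{M}^{\tr})\Delta z\in\DUAL{\reds{C}}$. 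This is the weakened analogue, for plain copositive matrices, of the copositive-plus relation $(M+M^{\tr})z=0$.

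Then I would argue by contradiction. Assume \PATHAVI{} generates an unbounded ray at an iterate $(\MYK{x},\MYK{t})$ with $k>0$, so $\MYK{t}>0$. Along the ray, $\reds{G}_{\reds{C}}(\MYKp{x},\MYK{t})=0$; writing $z^{\ast}=\pi_{\reds{C}}(\MYKp{x})$ and $u^{\ast}=z^{\ast}-\MYKp{x}$, we get $u^{\ast}=\reds{M}z^{\ast}+\reds{q}-\MYK{t}r$, and since $\reds{C}$ is a cone, $u^{\ast}\in\DUAL{\reds{C}}$ with $\langle z^{\ast},u^{\ast}\rangle=0$. Along the ray, $z(\theta)=z^{\ast}+\theta\Delta z$, $u(\theta)=u^{\ast}+\theta\reds{M}\Delta z$, and $\langle z(\theta),u(\theta)\rangle=0$; the coefficient of $\theta$ gives $\langle z^{\ast},\reds{M}\Delta z\rangle+\langle\Delta z,u^{\ast}\rangle=0$, where both summands are nonnegative (since $z^{\ast},\Delta z\in\reds{C}$ and $\reds{M}\Delta z,u^{\ast}\in\DUAL{\reds{C}}$), so both vanish. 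Pairing $u^{\ast}=\reds{M}z^{\ast}+\reds{q}-\MYK{t}r$ with $\Delta z$ then yields
\[
0=\langle\Delta z,u^{\ast}\rangle=\langle\Delta z,\reds{M}z^{\ast}\rangle+\langle\Delta z,\reds{q}\rangle-\MYK{t}\langle\Delta z,r\rangle .
\]
Now $\langle\Delta z,\reds{M}z^{\ast}\rangle=\langle(\reds{M}+\reds{M}^{\tr})\Delta z,z^{\ast}\rangle-\langle z^{\ast},\reds{M}\Delta z\rangle=\langle(\reds{M}+\reds{M}^{\tr})\Delta z,z^{\ast}\rangle\ge 0$ by (ii) and $z^{\ast}\in\reds{C}$; $\langle\Delta z,\reds{q}\rangle\ge 0$ by~\eqref{eq:fc_vi:q_cond} applied to $\Delta z$; and $-\MYK{t}\langle\Delta z,r\rangle>0$ since $-\MYK{t}r\in\intS\DUAL{\reds{C}}$ and $\Delta z\in\reds{C}\setminus\{0\}$. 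Summing gives $0>0$, a contradiction, so \PATHAVI{} terminates with $t=0$ and hence finds a solution, which in particular shows $\AVI(\reds{C},\reds{q},\reds{M})$ is solvable.

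The main obstacle is the term $\langle\Delta z,\reds{M}z^{\ast}\rangle$: unlike copositive-plus, plain copositivity does not give $\reds{M}^{\tr}\Delta z\in\DUAL{\reds{C}}$, so this term is not directly sign-definite. Bridging it requires combining the weak membership $(\reds{M}+\reds{M}^{\tr})\Delta z\in\DUAL{\reds{C}}$ of (ii) with the \emph{exact} complementarity relation $\langle z^{\ast},\reds{M}\Delta z\rangle=0$ read off from the ray --- and it is precisely because only the weak membership is available that the hypothesis~\eqref{eq:fc_vi:q_cond} on $\reds{q}$ cannot be dropped and the $L$-matrix result of Theorem~\ref{thm:class} does not cover this class. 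A minor point is ensuring that the first-order complementarity coefficient yields the \emph{equalities} $\langle z^{\ast},\reds{M}\Delta z\rangle=0$ and $\langle\Delta z,u^{\ast}\rangle=0$, not merely inequalities.
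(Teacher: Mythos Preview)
Your proof is correct, but it follows a genuinely different route from the paper's.

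The paper pairs the \emph{moving} point $z_{k+1}=z_k+\theta\Delta z$ with $\reds{G}_{\reds{C}}(x_{k+1},t_k)=0$ and views the resulting identity
\[
0=\langle z_{k+1},\reds{M}z_{k+1}\rangle+\langle z_{k+1},\reds{q}\rangle+\langle z_{k+1},x_{k+1}-z_{k+1}\rangle-\langle z_{k+1},t_kr\rangle
\]
as a polynomial in $\theta$: copositivity makes the quadratic term nonnegative, the normal-cone term is nonnegative since $0\in\reds{C}$, and the linear-in-$\theta$ contribution $\langle\Delta z,\reds{q}\rangle+\langle\Delta z,-t_kr\rangle$ is strictly positive by~\eqref{eq:fc_vi:q_cond} and $-t_kr\in\intS\DUAL{\reds{C}}$. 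Letting $\theta\to\infty$ gives the contradiction. No first-order consequence of copositivity is needed; only the bulk inequality $\langle z,\reds{M}z\rangle\ge0$ on $\reds{C}$ is used.

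You instead extract two sharper structural facts: the differential relation $(\reds{M}+\reds{M}^{\tr})\Delta z\in\DUAL{\reds{C}}$ (your step~(ii)), and the exact complementarity $\langle z^{\ast},\reds{M}\Delta z\rangle=\langle\Delta z,u^{\ast}\rangle=0$ obtained from the $\theta$-coefficient of $\langle z(\theta),u(\theta)\rangle\equiv0$. These let you pair $\Delta z$ with $u^{\ast}$ and reach a contradiction at a \emph{fixed} point on the ray, without an asymptotic argument. The paper's approach is shorter and more elementary; yours isolates more of the underlying geometry (in particular, your step~(ii) is the ``weak copositive-plus'' fact that explains precisely why the extra hypothesis~\eqref{eq:fc_vi:q_cond} on $\reds{q}$ is what fills the gap left by dropping the plus condition).
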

\begin{proof}
 Recall from~\cite[Lemma~4.3]{cao96}, that a copositive matrix is also semimonotone.
 This implies that $\Delta t = 0$ and that $\Delta z \neq 0$ satisfies the left-hand side of~\eqref{eq:fc_vi:q_cond}.
Now let us suppose that at the current iterate $x_k$, there exists an unbounded ray.
Letting $z_{k+1} = z_k + \theta\Delta z$  and computing the inner product 
$\langle z_{k+1}, \reds{G}_{\reds{C}}(x_{k+1},t_k)\rangle$ yields
\begin{equation}
 0=\langle z_{k+1}, \reds{G}_{\reds{C}}(x_{k+1}, t_k) \rangle = 
  \langle z_{k+1}, \reds{M}z_{k+1}\rangle + \langle z_{k+1}, \reds{q}\rangle + \langle z_{k+1}, x_{k+1} - z_{k+1}\rangle + \langle z_{k+1}, -t_k r\rangle.
  \label{}
\end{equation}
Note that since $\reds{C}$ is pointed, $\langle z_{k+1}, x_{k+1} - z_{k+1}\rangle \geq 0$ by the definition of the normal cone.
The first term is quadratic in $\theta$ while the second and third are linear in $\theta$.
Therefore, if $\langle \Delta z, \reds{M}\Delta z\rangle > 0$, then
$\langle z_{k+1}, \reds{G}_{\reds{C}}(x_{k+1}, t_k) \rangle > 0$ for $\theta$ large enough
and we reach a contradiction. We are left with the case $\langle \Delta z, \reds{M}\Delta z\rangle = 0$:
\begin{gather}
 0=
 \langle z_{k}, \reds{q} \rangle - \langle z_{k}, t_k r\rangle + \langle z_{k+1}, x_{k+1} - z_{k+1}\rangle + \langle z_{k+1}, \reds{M}z_{k+1}\rangle + \theta(\langle \Delta z, \reds{q}\rangle + \langle \Delta z, -t_k r\rangle).  \label{eq:fc_vi:copos_theta_z}
\end{gather}
The sum multiplied by $\theta$ is positive since $-t_k r\in\intS \DUAL{(\rec \reds{C})}$.
Now the first two terms are constant and the third and fourth ones are nonnegative. Whence for $\theta$ large enough, $\langle z_{k+1}, \reds{G}_{\reds{C}}(x_{k+1}, t_k)\rangle$ is positive,
which concludes the proof.
\end{proof}
\begin{remark}
 Lemma~\ref{lem:pathAVIext_copos} was already known for the LCP case (that is $\reds{C}=\R^n_+$):
 the existence of a solution is given in~\cite[Theorem~3.8.6]{cottle2009linear}.
 Here we are able to provide a constructive proof for an $\AVI{}(\reds{C}, \reds{q}, \reds{M})$.
\end{remark}
Let us present an \AVI$(C, q, M)$ that satisfies the conditions of Lemma~\ref{lem:pathAVIext_copos} where $M$ is not an $\Lmat$.
Suppose that $C \subseteq \R^{n+1}_+$ is a polyhedral solid cone, $M= \begin{pmatrix} I_n&0\\\mathbf{1}_n^{\tr}&0 \end{pmatrix}$, with $\mathbf{1}_n$ the vector of ones of size $n$ and $q=(0_n, 1)^{\tr}$.
The solution set of the system $x\in C$, $Mx = 0$  and $x^{\tr}Mx = 0$ is $\{(0_n, \alpha)^{\tr}$, $\alpha \geq 0\}$.
Note that if $x = (0_n, \alpha)^{\tr}$, $\alpha > 0$, then $Ax = 0$ and $x^{\tr}Mx = 0$.
However, for any nonzero vector $x'=(x_1'^{\tr}, \alpha')^{\tr}$ in $ C$, $-M^{\tr}x' = (-I_n x_1'^{\tr} -\alpha'\mathbf{1}^{\tr}, 0)^{\tr}\not\in\DUAL{C}$.
Therefore, condition (b) of the $\Lmat$ fails to hold. On the other hand, we can readily check that $M$ is copositive with respect to $C$ and that
for any $x = (0_n, \alpha)^{\tr}$, $\alpha \geq 0$, $x^{\tr}q = \alpha \geq 0$. Whence Lemma~\ref{lem:pathAVIext_copos} can be used.

\section{Computing an implicit extreme point
  for a ray start}\label{sec:computing}

In this section, we describe how to compute an implicit extreme point
satisfying the sufficient conditions for a ray start and the complementary
basis associated with it so that we can start complementary pivoting at that
implicit extreme point.
An implicit extreme point is computed using a linear programming (LP) solver,
i.e., CPLEX or GUROBI, with possibly additional pivoting, and
its complementary basis is computed based on the basis information given by
the LP solver. The use of the existing LP solver, which has fast sparse
linear algebra engine and pivoting method, as well as the use of
sparse linear algebra engine for complementary pivoting enables \PATHAVI{}
to fully exploit the sparse representation of the given \AVI{}. 
This makes our method efficient for large-scale \AVI{} problems.
See for example Section \ref{subsec:exp-preserve}.

We start with an introduction to some terminology and notational conventions
for describing a basic solution of an LP problem. We follow notation used in
\cite{bixby92}.
Suppose that we run an LP solver over an LP problem: minimize $c^\tr z$
subject to $Az - b \in K$ and $l \le z \le u$.
Without loss of generality, we assume that we have eliminated all fixed
variables.
For each solution $z$ obtained from the LP solver, we have four index sets,
$B,N_l,N_u$, and $N_{fr}$, for variables and
two index sets, $\mathcal{A}$ and $\bar{\mathcal{A}}$, for constraints
described by $A$ and $b$. \footnote{These index sets can be obtained using
\texttt{CPXgetbase()} for CPLEX, for example.}
Table~\ref{tbl:basis} in the Appendix lists the properties of
the index sets and the solution $z$. In Table~\ref{tbl:basis},
if $l_B \le z_B \le u_B$, we say that $z$ is a basic feasible solution.
Otherwise, we say that $z$ is a basic solution. Note that we have
$|\mathcal{A}|=|B|$ in Table~\ref{tbl:basis} as the basis matrix
$\mathbf{B}$ is invertible.
Hence the submatrix $A_{\mathcal{A}B}$ of $\mathbf{B}$ is square and invertible.

We first describe how to compute an implicit extreme point of $C$,
based on which we compute another implicit extreme point if necessary
satisfying the sufficient conditions for a ray start. For a given $\AVI(C,q,M)$,
we formulate and solve the following LP problem using an LP solver:
\begin{equation*}
\begin{aligned}
&\text{minimize} && 0^\tr z\\
&\text{subject to} && Az - b \in K\\
&&& l \le z \le u
\end{aligned}
\tag{LP}
\label{eq:lp}
\end{equation*}

We put zero objective coefficients in the \eqref{eq:lp} so that
the \eqref{eq:lp} returns right away once it finds a basic feasible solution;
the (LP) returns once it solves the phase I of the simplex method.
If we have a good knowledge about where to start complementary pivoting, then
we could try to solve the (LP) with different objective coefficients.

Assuming that the \eqref{eq:lp} is feasible, a basic feasible solution
$z^0$ from the LP solver with the corresponding index sets is an extreme
point if $N_{fr}=\varnothing$.
When $N_{fr} \neq \varnothing$, $z^0$ might not be an implicit extreme point.
In this case, we move from $z^0$ to another implicit extreme point by doing
additional pivoting in a way that we make as many nonbasic free variables
as basic variables. Algorithm~\ref{alg:pivoting} in the Appendix describes
the pivoting procedure. After applying Algorithm~\ref{alg:pivoting}, for each
$j \in N_{fr}$ and $d^j=A_{\mathcal{A}B}^{-1}A_{\mathcal{A},j}$ if there exists
$k$ such that $d^j_k \neq 0$, then the basic variable corresponding to the
$k$th position in $B$ is a free variable. Otherwise, the variable
$z_j$ must have been pivoted in by Algorithm~\ref{alg:pivoting}.
Also note that Algorithm~\ref{alg:pivoting} doesn't change the properties
described in Table~\ref{tbl:basis}.
Using Algorithm~\ref{alg:pivoting}, we obtain the following result.

\begin{proposition}
Suppose that we have applied Algorithm~\ref{alg:pivoting}. Then the new point,
denoted by $\bar{z}^0$,
constructed from $z^0$ through Algorithm~\ref{alg:pivoting}
is an implicit extreme point of $C$. We have
$\dim(\lin C)=|N_{fr}|$ and the following set of vectors is a basis for
the lineality space of $C$:
\begin{equation*}
\bigcup_{j \in N_{fr}} \{v^j\},\quad
v_k^j = \left\{\begin{array}{ll}
(A_{\mathcal{A}B}^{-1}A_{\mathcal{A},j})_k & \text{if } k \in B,\\
0 & \text{if } k \in N_l \cup N_u,\\
0 & \text{if } k \in N_{fr}, k \neq j,\\
1 & \text{if } k = j.
\end{array}
\right.
\end{equation*}
\end{proposition}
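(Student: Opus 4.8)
The plan is to verify the claim in three stages: first show that the $v^j$ are genuine lines in $C$ (i.e., $v^j \in \lin C$), second show they are linearly independent, and third show they span all of $\lin C$, which simultaneously yields $\dim(\lin C) = |N_{fr}|$; the implicit-extreme-point conclusion then follows from Proposition~\ref{prop:normal-cone-perp} or from a direct argument. Throughout I would work with the representation $C = \{z \mid Az - b \in K,\ l \le z \le u\}$, and (as in the omitted Proposition~\ref{prop:extreme}) fold the cone constraint into an equality $Az + s = b$ with $s$ ranging over an appropriate box, so that the constraint rows split into the active set $\mathcal{A}$ (where the slack is at a bound) and the inactive set $\bar{\mathcal{A}}$.

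For the first stage, fix $j \in N_{fr}$ and consider $v^j$ as defined. I would check that $Av^j$ has zero components on the active rows $\mathcal{A}$: by construction $A_{\mathcal{A}B}v^j_B = A_{\mathcal{A}B}(A_{\mathcal{A}B}^{-1}A_{\mathcal{A},j}) = A_{\mathcal{A},j}$, while the only nonbasic coordinate of $v^j$ that is nonzero is the $j$th, contributing $A_{\mathcal{A},j}\cdot 1$; hence $A_{\mathcal{A}\bullet}v^j = A_{\mathcal{A},j} - A_{\mathcal{A},j} = 0$. This means that for $\bar z^0 \in C$ and any $t \in \R$, the point $\bar z^0 + t v^j$ keeps every active slack at its (possibly infinite-sided, but here the relevant ones are finite) bound, and keeps every bounded coordinate $z_k$, $k \in N_l \cup N_u$, fixed at its bound; the inactive rows and the free coordinates have slack to move. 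Since $z_j$ is a free variable ($l_j = -\infty$, $u_j = +\infty$) and the basic coordinates indexed by $B$ that get perturbed are, by the post-Algorithm~\ref{alg:pivoting} structure, themselves free variables (this is exactly the property recalled just before the proposition: if $d^j_k = (A_{\mathcal{A}B}^{-1}A_{\mathcal{A},j})_k \neq 0$ then the $k$th basic variable is free), no bound is ever violated for $t$ of either sign. So the whole line $\bar z^0 + \R v^j \subseteq C$, giving $v^j \in \lin C$. Linear independence is immediate from the block pattern: on the coordinates $N_{fr}$, the vector $v^j$ is the $j$th standard basis vector, so the family $\{v^j\}_{j \in N_{fr}}$ restricted to those coordinates is the identity.

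For the spanning stage — which I expect to be the main obstacle — I would take an arbitrary $d \in \lin C$ and show $d \in \sspan\{v^j : j \in N_{fr}\}$. Since $\lin C$ is the lineality space, both $\bar z^0 \pm d \in C$, so the bounded coordinates force $d_k = 0$ for all $k \in N_l \cup N_u$ (a finite bound at value $l_k$ cannot be crossed in both directions), and the active constraint rows force $A_{\mathcal{A}\bullet}d = 0$, i.e. $A_{\mathcal{A}B}d_B + A_{\mathcal{A},N_{fr}}d_{N_{fr}} = 0$, whence $d_B = -A_{\mathcal{A}B}^{-1}A_{\mathcal{A},N_{fr}}d_{N_{fr}} = -\sum_{j \in N_{fr}} d_j\, (A_{\mathcal{A}B}^{-1}A_{\mathcal{A},j})$. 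Comparing with the definition of $v^j$, one sees $d = \sum_{j \in N_{fr}} d_j v^j$ after accounting for signs and for the fact that $v^j$ carries $+1$ in coordinate $j$; the bookkeeping here (consistency of the sign convention in the displayed formula, and confirming that $d$ restricted to $N_l \cup N_u$ and to the active slacks really must vanish rather than merely being constrained on one side) is the delicate part. Combined with independence this gives $\dim(\lin C) = |N_{fr}|$. Finally, $\bar z^0$ is an implicit extreme point: the active constraints at $\bar z^0$ (the bounded nonbasic variables at their bounds, plus the active rows) cut out exactly the affine subspace $\bar z^0 + \lin C$ of dimension $|N_{fr}| = \dim(\lin C)$, so the minimal face containing $\bar z^0$ is $\bar z^0 + \lin C$, an $\ell$-dimensional face, and Proposition~\ref{prop:implicit-face} applies.
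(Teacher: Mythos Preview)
Your argument is correct in substance and diverges from the paper's in two places. For the inequality $\dim(\lin C)\le |N_{fr}|$, the paper does not show directly that every $d\in\lin C$ lies in $\sspan\{v^j\}$; instead it observes $\dim(N_C(\bar z^0))\ge |B|+|N_l|+|N_u|$ from the active gradients and uses $N_C(\bar z^0)\subseteq(\lin C)^\perp$ to bound the dimension from above. Your direct spanning argument is more constructive and exhibits the coordinates of $d$ explicitly, while the paper's normal-cone count is shorter but less informative. For the implicit-extreme-point conclusion, the paper verifies Definition~\ref{def:implicit} by hand (shifting $z^1,z^2$ along lineality directions to reduce to an ordinary extreme-point argument), whereas you invoke Proposition~\ref{prop:implicit-face} by identifying the minimal face with $\bar z^0+\lin C$; your route is cleaner, and in fact the paper's own normal-cone inequality combined with Proposition~\ref{prop:normal-cone-perp} would already yield the implicit-extreme-point conclusion in one line, so the paper's direct verification is somewhat redundant. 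One caution: your line $A_{\mathcal A\bullet}v^j = A_{\mathcal A,j}-A_{\mathcal A,j}$ is wrong as written---with the displayed formula (no minus sign on the $B$-block) the two contributions \emph{add}, not cancel. The intended sign is the one appearing in the matrix $W$ of Proposition~\ref{prop:comp}; you rightly flag this bookkeeping later, but the first-stage computation should be corrected to match.
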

\begin{proof}
Clearly, $\bar{z}^0 \in C$ as we do a ratio test to move the point.
We first show that $\lin C = |N_{fr}|$ and
$\{v^j\}_{j \in N_{fr}}$ is a basis for the lineality space of $C$.
For each $j \in N_{fr}$,
if $v^j_k \neq 0$ for $k \in B$, then we have $l_k = -\infty$ and
$u_k = \infty$ as discussed in the previous paragraph.
It follows that $\bar{z}^0 + \lambda v^j \in C$ for all
$\lambda \in \mathbb{R}$. By \cite[Theorem 8.3]{rockafellar70},
$v^j \in \rec C \cap (-\rec C)$. Thus $v^j \in \lin C$.
By construction of $v^j$, we see that $v^j$'s are linearly independent.
This implies that $\dim(\lin C) \ge |N_{fr}|$.
As $\dim(N_C(\bar{z}^0)) \ge |B|+|N_l|+|N_u|$
and $N_C(\bar{z}^0) \subset (\lin C)^\perp$ as shown in
Proposition~\ref{prop:twokey}(d), it follows that $\dim(\lin C) = |N_{fr}|$ and
$\{v^j\}_{j \in N_{fr}}$ is a basis for the lineality space of $C$.

We now prove that $\bar{z}^0$ is an implicit extreme point of $C$.
Suppose that $\bar{z}^0=\lambda z^1 + (1-\lambda)z^2$ for some $z^1,z^2 \in C$
and $\lambda \in (0,1)$. Define $d^k = \sum_{j \in N_{fr}} (-z^k_j v^j)$
and set $\tilde{z}^k=z^k + d^k$ for $k=1,2$. We then have
$\tilde{z}^k_j = 0$ for $j \in N_{fr}$ and $\tilde{z}^k \in C$ as
$d^k \in \lin C$ for $k=1,2$.
As $\bar{z}^0=\lambda z^1 + (1-\lambda)z^2$,
$\bar{z}^0=\lambda \tilde{z}^1 + (1-\lambda)\tilde{z}^2 -
(\lambda d^1 + (1-\lambda)d^2)$. We have
$\lambda d^1 + (1-\lambda)d^2
=\sum_{j \in N_{fr}}\left(-(\lambda z^1_j+(1-\lambda)z^2_j)v^j\right)$. As
$\bar{z}^0_{N_{fr}}=\tilde{z}^1_{N_{fr}}=\tilde{z}^2_{N_{fr}}=0, v^j_j=1$,
and $v^j_h=0$ for $h \in N_{fr}, h \neq j$,
we see that $\lambda d^1+(1-\lambda)d^2=0$. Therefore,
$\bar{z}^0=\lambda \tilde{z}^1+(1-\lambda)\tilde{z}^2$. It follows that
$\bar{z}^0=\tilde{z}^1=\tilde{z}^2$. Thus, $\bar{z}^0-z^k = d^k \in \lin C$
for $k=1,2$, which implies that $\bar{z}^0$ is an implicit extreme point of
$C$.
\end{proof}

With the implicit extreme point $\bar{z}^0$ of $C$ and the index sets
$(B,N_l,N_u,N_{fr},\mathcal{A},\bar{\mathcal{A}})$ associated with it,
we finally construct an initial complementary basis and compute an implicit
extreme point satisfying the sufficient conditions for a ray start using that
complementary basis. To prove the invertibility of our initial complementary
basis, we first need to introduce the following technical result
derived from \cite[Lemma 3.6]{liu95}.

\begin{corollary}
Suppose that we have index sets
$(B,N_l,N_u,N_{fr},\mathcal{A},\bar{\mathcal{A}})$ associated with
an $\AVI(C,q,M)$ with a nonempty $N_{fr}$.
Then $Z$ is invertible if and only if
$\tilde{W}^\tr \tilde{M}\tilde{W}$ is invertible, where
\begin{equation*}
Z = \begin{bmatrix}
M_{BB} & M_{BN_{fr}} & -A^\tr_{\mathcal{A}B}\\
M_{N_{fr}B} & M_{N_{fr}N_{fr}} & -A^\tr_{\mathcal{A}N_{fr}} \\
A_{\mathcal{A}B} & A_{\mathcal{A}N_{fr}} & 0
\end{bmatrix},\quad
\tilde{M} = \begin{bmatrix}
M_{BB} & M_{BN_{fr}} \\
M_{N_{fr}B} & M_{N_{fr}N_{fr}}
\end{bmatrix},\quad
\tilde{W} = \begin{bmatrix}
-A^{-1}_{\mathcal{A}B} A_{\mathcal{A}N_{fr}}\\
I_{N_{fr}}
\end{bmatrix},
\end{equation*}
and $I_{N_{fr}}$ is an identity matrix of size $|N_{fr}| \times |N_{fr}|$.
\label{cor:inv}
\end{corollary}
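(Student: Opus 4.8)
The plan is to read $Z$ as a saddle-point (KKT-type) matrix and reduce its invertibility to that of the projected matrix $\tilde{W}^\tr \tilde{M}\tilde{W}$. Put $\hat{A} := \begin{bmatrix} A_{\mathcal{A}B} & A_{\mathcal{A}N_{fr}} \end{bmatrix}$, so that $Z = \begin{bmatrix} \tilde{M} & -\hat{A}^\tr \\ \hat{A} & 0 \end{bmatrix}$ with $\tilde{M}$ as in the statement. Since $|\mathcal{A}| = |B|$ and the basis matrix is invertible, $A_{\mathcal{A}B}$ is square and invertible, hence $\hat{A}$ has full row rank $|\mathcal{A}|$ and $\dim \ker \hat{A} = |N_{fr}|$. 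A one-line computation gives $\hat{A}\tilde{W} = -A_{\mathcal{A}B}A_{\mathcal{A}B}^{-1}A_{\mathcal{A}N_{fr}} + A_{\mathcal{A}N_{fr}} = 0$, and $\tilde{W}$ has full column rank $|N_{fr}|$ because of its bottom identity block; therefore the columns of $\tilde{W}$ form a basis of $\ker \hat{A}$. With this identification the claimed equivalence is precisely \cite[Lemma 3.6]{liu95}.

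For completeness I would also record the short direct argument; write $(p,\mu)$ for a vector split conformally with the block structure of $Z$. If $Z\begin{bmatrix} p \\ \mu \end{bmatrix} = 0$, then $\hat{A}p = 0$, so $p = \tilde{W}\eta$ for some $\eta$; left-multiplying $\tilde{M}p = \hat{A}^\tr\mu$ by $\tilde{W}^\tr$ and using $\tilde{W}^\tr\hat{A}^\tr = (\hat{A}\tilde{W})^\tr = 0$ gives $\tilde{W}^\tr\tilde{M}\tilde{W}\eta = 0$. Hence if $\tilde{W}^\tr\tilde{M}\tilde{W}$ is invertible then $\eta = 0$, so $p = 0$, and then $\hat{A}^\tr\mu = 0$ forces $\mu = 0$ by the full column rank of $\hat{A}^\tr$; thus $Z$ is invertible. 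Conversely, suppose $\tilde{W}^\tr\tilde{M}\tilde{W}\eta = 0$ and set $p = \tilde{W}\eta$; then $\tilde{M}p$ is orthogonal to $\operatorname{range}\tilde{W} = \ker\hat{A}$, i.e. $\tilde{M}p \in (\ker\hat{A})^\perp = \operatorname{range}\hat{A}^\tr$, so $\tilde{M}p = \hat{A}^\tr\mu$ for some $\mu$; together with $\hat{A}p = \hat{A}\tilde{W}\eta = 0$ this says $Z\begin{bmatrix}p\\\mu\end{bmatrix} = 0$, so invertibility of $Z$ gives $p = 0$, and then $\eta = 0$ by the full column rank of $\tilde{W}$. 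Both arguments apply verbatim to the (generally nonsymmetric) $\tilde{M}$ appearing here.

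The content is essentially linear-algebra bookkeeping, so I do not anticipate a genuine obstacle. The only points requiring a moment's care are (i) that $A_{\mathcal{A}B}$ being square and invertible is exactly what makes $\hat{A}$ full row rank, so that $\tilde{W}$ is an honest basis of $\ker\hat{A}$ and not merely a spanning family, and (ii) the fundamental identity $(\ker\hat{A})^\perp = \operatorname{range}\hat{A}^\tr$ used in the converse direction; both follow from Table~\ref{tbl:basis} and standard facts about the four fundamental subspaces.
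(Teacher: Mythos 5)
Your first paragraph is exactly the paper's proof: identify $\tilde W$ as a basis of $\ker\begin{bmatrix} A_{\mathcal{A}B} & A_{\mathcal{A}N_{fr}}\end{bmatrix}$ using the invertibility of $A_{\mathcal{A}B}$, then invoke \cite[Lemma 3.6]{liu95}. Your second paragraph is a correct and welcome self-contained verification that makes the citation unnecessary, but it is the standard nullspace argument for saddle-point systems rather than a genuinely different route.
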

\begin{proof}
As $A_{\mathcal{A}B}$ is square and invertible,
$\ker \begin{bmatrix} A_{\mathcal{A}B} & A_{\mathcal{A}N_{fr}}\end{bmatrix}
= \im \tilde{W}$. The result follows from \cite[Lemma 3.6]{liu95}.
\end{proof}

We are now ready to present our initial complementary basis and an implicit
extreme point satisfying the sufficient conditions for a ray start.

\begin{proposition}
For a given $\AVI(C,q,M)$, suppose that we have an implicit extreme point
$\bar{z}^0$ and the index sets
$(B,N_l,N_u,N_{fr},\mathcal{A},\bar{\mathcal{A}})$ associated with
$\bar{z}^0$. Then the matrix on the left-hand side of the following system
of equations is invertible if and only if $M$ is invertible on the lineality
space of $C$. Also $z=(z_B,z_{N_{fr}},\bar{z}^0_{N_l},\bar{z}^0_{N_u})$
in a solution to the system of equations
satisfies $z \in \bar{z}^0 + \lin C$, i.e., $z$ is an implicit
extreme point of $C$ by Proposition~\ref{prop:implicit-face} in
Section~\ref{sec:theory}, and $Mz+q \in \aff(N_C(z))$.
\begin{equation*}
\begin{bmatrix}
M_{BB} & M_{BN_{fr}} & -A^\tr_{\mathcal{A}B} & 0 & 0 & 0\\
M_{N_lB} & M_{N_lN_{fr}} & -A^\tr_{\mathcal{A}N_l} & -I_{N_l} & 0 & 0\\
M_{N_uB} & M_{N_uN_{fr}} & -A^\tr_{\mathcal{A}N_u} & 0 & I_{N_u} & 0\\
M_{N_{fr}B} & M_{N_{fr}N_{fr}} & -A^\tr_{\mathcal{A}N_{fr}} & 0 & 0 & 0\\
A_{\mathcal{A}B} & A_{\mathcal{A}N_{fr}} & 0 & 0 & 0 & 0\\
A_{\bar{\mathcal{A}}B} & A_{\bar{\mathcal{A}}N_{fr}} & 0 & 0 & 0 & -I_{\bar{\mathcal{A}}}\\
\end{bmatrix}
\begin{bmatrix}
z_B \\ z_{N_{fr}} \\ \lambda_{\mathcal{A}} \\ w_{N_l} \\ v_{N_u} \\
s_{\bar{\mathcal{A}}}
\end{bmatrix}
=
\begin{bmatrix}
-q_B - M_{BN}\bar{z}^0_N\\
-q_{N_l} - M_{N_lN}\bar{z}^0_N\\
-q_{N_u} - M_{N_uN}\bar{z}^0_N\\
-q_{N_{fr}} - M_{N_{fr}N}\bar{z}^0_N\\
b_{\mathcal{A}} - A_{\mathcal{A}N}\bar{z}^0_N\\
b_{\bar{\mathcal{A}}} - A_{\bar{\mathcal{A}}N}\bar{z}^0_N
\end{bmatrix}.
\end{equation*}
\label{prop:comp}
\end{proposition}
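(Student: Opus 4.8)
The plan is to separate the two claims. For the invertibility statement, I would first permute the rows and columns of the big matrix so that the free-variable/active-constraint block and the $(w_{N_l}, v_{N_u}, s_{\bar{\mathcal{A}}})$ block become visible as a bordered system. The key observation is that the columns corresponding to $w_{N_l}$, $v_{N_u}$, and $s_{\bar{\mathcal{A}}}$ are (signed) unit columns hitting distinct rows that appear nowhere else in any other column contributing to the ``core'' square block. Concretely, the rows indexed by $N_l$, $N_u$, and $\bar{\mathcal{A}}$ can each be used to eliminate one of these slack-type variables by a Schur-complement step that leaves the remaining submatrix unchanged (the $-I$ blocks are themselves invertible and the off-diagonal contributions they kill do not feed back into the core). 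After performing this elimination, the invertibility of the full matrix is equivalent to the invertibility of the reduced block
\begin{equation*}
Z = \begin{bmatrix}
M_{BB} & M_{BN_{fr}} & -A^\tr_{\mathcal{A}B}\\
M_{N_{fr}B} & M_{N_{fr}N_{fr}} & -A^\tr_{\mathcal{A}N_{fr}} \\
A_{\mathcal{A}B} & A_{\mathcal{A}N_{fr}} & 0
\end{bmatrix},
\end{equation*}
which is exactly the matrix $Z$ from Corollary~\ref{cor:inv}. By that corollary, $Z$ is invertible iff $\tilde{W}^\tr \tilde{M}\tilde{W}$ is invertible. The remaining task is to identify $\tilde{W}^\tr \tilde{M}\tilde{W}$ with $M$ restricted to $\lin C$, i.e., with $M_{QQ}$ up to a change of basis. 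For this I would use the previous proposition: the columns $v^j$, $j \in N_{fr}$, form a basis of $\lin C$, and stacking them (restricted to the coordinates $B \cup N_{fr}$, all other coordinates being zero) gives precisely $\tilde{W}$ after reordering, since $v^j_k = (A_{\mathcal{A}B}^{-1}A_{\mathcal{A},j})_k$ for $k\in B$ and $v^j_{N_{fr}} = I$. Hence $\tilde{W}^\tr \tilde{M}\tilde{W} = V^\tr M V$ for a basis matrix $V$ of $\lin C$, and this is invertible iff $M$ is invertible on $\lin C$ (the choice of basis — $V$ versus the orthonormal $Q$ — does not affect invertibility, since the two are related by an invertible change of basis).

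For the second claim, I would simply read off the block equations. The rows indexed by $\mathcal{A}$ and $\bar{\mathcal{A}}$ say $A_{\mathcal{A}\bullet} z = b_{\mathcal{A}}$ and $A_{\bar{\mathcal{A}}\bullet} z - s_{\bar{\mathcal{A}}} = b_{\bar{\mathcal{A}}}$ with $z_N = \bar{z}^0_N$ fixed on the nonbasic-bound coordinates; since $A_{\mathcal{A}B}$ together with $A_{\mathcal{A}N_{fr}}$ has full row rank only on the $B$-part (it is $A_{\mathcal{A}B}$ that is square invertible), the solution $z$ differs from $\bar{z}^0$ only by a vector in the span of the $v^j$'s, i.e. $z - \bar{z}^0 \in \lin C$. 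More carefully: the active-constraint rows force $A_{\mathcal{A}B}(z_B - \bar{z}^0_B) = -A_{\mathcal{A}N_{fr}}(z_{N_{fr}} - \bar{z}^0_{N_{fr}})$, so $z - \bar{z}^0 = \sum_{j\in N_{fr}} (z_{N_{fr}} - \bar{z}^0_{N_{fr}})_j v^j \in \lin C$; by Proposition~\ref{prop:implicit-face}, $z$ is then an implicit extreme point. Finally, the first four block rows together state $Mz + q - A^\tr_{\mathcal{A}\bullet}\lambda_{\mathcal{A}} - w + v = 0$ with $w$ supported on $N_l$ and $v$ on $N_u$; that is, $Mz+q \in \sspan\{A^\tr_{\mathcal{A}j}\} + \sspan\{e_k : k\in N_l\cup N_u\}$, which is exactly $\aff(N_C(z))$ (the normal cone at $z$ is generated by the active constraint normals and the active bound directions, and its affine hull is their span, which equals $(\lin C)^\perp$ by Proposition~\ref{prop:normal-cone-perp} — consistent with the fact that the set of indices $B\cup N_{fr}$ not appearing gives a space of the right codimension $|N_{fr}| = \dim\lin C$).

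The main obstacle I anticipate is the bookkeeping in the first part: carefully justifying that the elimination of $w_{N_l}, v_{N_u}, s_{\bar{\mathcal{A}}}$ via the $-I$ blocks reduces the determinant condition to that of $Z$ without introducing any spurious coupling, and then matching $\tilde{W}^\tr\tilde{M}\tilde{W}$ with $V^\tr M V$ under the correct row/column permutation. Neither step is deep, but the index-set gymnastics (distinguishing $B$, $N_{fr}$, $N_l$, $N_u$, $\mathcal{A}$, $\bar{\mathcal{A}}$ and keeping their sizes consistent with $|\mathcal{A}| = |B|$) must be done with care. Everything else is a direct application of Corollary~\ref{cor:inv}, Proposition~\ref{prop:implicit-face}, Proposition~\ref{prop:normal-cone-perp}, and the explicit formula for the $v^j$ in the preceding proposition.
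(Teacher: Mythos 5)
Your proof follows essentially the same route as the paper's: eliminate the slack-type variables $w_{N_l},v_{N_u},s_{\bar{\mathcal{A}}}$ via the signed identity blocks to reduce the invertibility question to that of $Z$, apply Corollary~\ref{cor:inv}, identify $\tilde{W}^\tr\tilde{M}\tilde{W}$ with $V^\tr M V$ for $V$ a basis of $\lin C$ built from the $v^j$, and then read the block equations to get $z-\bar z^0\in\lin C$ and $Mz+q\in\aff(N_C(z))$. One small note: for $v^j$ actually to lie in $\lin C$ (and to match the $-A_{\mathcal{A}B}^{-1}A_{\mathcal{A}N_{fr}}$ block in $\tilde{W}$) one needs $v^j_k=-(A_{\mathcal{A}B}^{-1}A_{\mathcal{A},j})_k$ for $k\in B$; the missing sign is a typo in the paper's preceding proposition that you inherited, but it has no bearing on the invertibility conclusion or on the rest of your argument.
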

\begin{proof}
The matrix on the left-hand side of the system of equations is invertible if
and only if the matrix $Z$ defined in Corollary~\ref{cor:inv} is invertible.
This is because of the identity submatrices of it, $-I_{N_l},I_{N_u}$, and
$-I_{\bar{\mathcal{A}}}$. Define 
$W = (-A^{-1}_{\mathcal{A}B}A_{\mathcal{A}N_{fr}} \quad
I_{N_{fr}} \quad
0_{N_l} \quad
0_{N_u}
)^\tr$
where $I_{N_{fr}}$ is an identity matrix of size $|N_{fr}| \times |N_{fr}|$, and
$0_{N_l}$ and $0_{N_u}$ are zero matrices of sizes $|N_l| \times |N_{fr}|$ and
$|N_u| \times |N_{fr}|$, respectively. We see that the columns of $W$ is a
basis for the lineality space of $C$. We then have
$W^\tr M W = \tilde{W}^\tr \tilde{M} \tilde{W}$
where $\tilde{W}$ and $\tilde{M}$ are the matrices defined in
Corollary~\ref{cor:inv}. Therefore, the matrix is invertible if and only if
$M$ is invertible on the lineality space of $C$.

We now show that a $z$-part solution $z$ to the system of equations satisfies
$z \in \bar{z}^0 + \lin C$. From the system of equations, we have
\begin{equation*}
z_B = -A_{\mathcal{A}B}^{-1}A_{\mathcal{A}N_{fr}}z_{N_{fr}} +
A^{-1}_{\mathcal{A}B}(b_{\mathcal{A}} - A_{\mathcal{A}N}\bar{z}^0_N).
\end{equation*}

If $z_{N_{fr}}=0$ at a solution, then $z_B=\bar{z}^0_B$.
Therefore, $z=\bar{z}^0$. For $z_{N_{fr}} \neq 0$, we have
$z = \bar{z}^0 + Wz_{N_{fr}}$. As $W$ is a basis for the lineality space of $C$,
it follows that $z \in \bar{z}^0 + \lin C$. As $\bar{z}^0$ is an implicit
extreme point, $z$ is also an implicit extreme point by
Proposition~\ref{prop:implicit-face} in Section~\ref{sec:theory}.

From the first four equations of the given system of equations, we see that
$Mz+q \in \aff (N_C(z))$.
\end{proof}

\section{Worst-case performance comparison:
  \AVI{} vs \MCP{} reformulation}
\label{sec:worst-analysis}

In this section, we introduce the \MCP{} reformulation of an \AVI{}
and analyze worst-case performance of the two formulations in
Sections~\ref{subsec:mcp-reform} and~\ref{subsec:worst-case}, respectively.
We assume that the \MCP{} reformulation is solved using the 
same complementary pivoting method as the \AVI{} formulation is.
Computational results comparing the two formulations are
presented in Section~\ref{sec:exp}, and demonstrate the effectiveness of
working on the different manifold. 
(See Tables~\ref{table:FCLIB}--\ref{tbl:exp-nep} and
Fig.~\ref{fig:path_default-vs-pathavi} in Section~\ref{sec:exp}.)

\subsection{\MCP{} reformulation}
\label{subsec:mcp-reform}

A linear \MCP{} is defined as follows:
for an affine function $F(z)=Mz+q$ and a box constraint
$B_1:=\Pi_{j=1}^n [l_j,u_j]$, $z$ is a solution to the 
$\MCP(B_1,q,M)$ 
if $Mz+q = w-v, z \in B_1, w,v \in \mathbb{R}^n_+,
(z-l)^\tr w = 0$, and $(u-z)^\tr v=0$.

It is well known \cite[page 4]{dirkse96} that an $\AVI(C,q,M)$ can be
reformulated as an $\MCP(B_1 \times B_2, \tilde{q},\tilde{M})$, where
\begin{equation*}
\begin{aligned}
&B_1 = \Pi_{j=1}^n [l_j,u_j], \quad
B_2=\{\lambda \in \mathbb{R}^m \,|\, \lambda \in K^D\},\\
&\tilde{M} = \begin{bmatrix}
M & -A^\tr \\ A & 0
\end{bmatrix}, \quad
\tilde{q} = \begin{bmatrix}
q \\ -b
\end{bmatrix}.
\end{aligned}
\tag{MCP-reform}
\label{eq:mcp-reform}
\end{equation*}
By \cite[Proposition 1.2.1]{facchinei03},
$z^*$ is a solution to the $\AVI(C,q,M)$ if and only if there exists
$\lambda^*$ such that $(z^*,\lambda^*)$ is a solution to the
$\MCP(B_1 \times B_2, \tilde{q},\tilde{M})$. Therefore we can solve an
$\AVI(C,q,M)$ by solving its $\MCP(B_1 \times B_2,\tilde{q},\tilde{M})$
reformulation and vice versa. The solver \PATH{} \cite{dirkse95}, 
one of the most efficient \MCP{} solvers, uses this \MCP{} reformulation
when it takes an \AVI{}.

Although the two formulations are equivalent, they don't have the same
theoretical properties. This is mainly because they look at different feasible
regions, which also results in different \PL{}
manifolds on which the complementary pivoting is performed.
For the $\MCP(B_1 \times B_2,\tilde{q},\tilde{M})$ reformulation,
a \PL{} $(n+m+1)$-manifold
$\mathcal{M}_{B_1 \times B_2}$ is built where the full-dimensional cells are 
defined by the nonempty faces and the normal cones of the set
$B_1\times B_2$, which
doesn't consider the polyhedral constraints $Az - b \in K$.
For the $\AVI(C,q,M)$ formulation, a \PL{} $(n+1)$-manifold
$\mathcal{M}_C$ is constructed  based on the nonempty faces and normal cones
of $C$, which incorporates the polyhedral constraints $Az - b \in K$
explicitly.

\subsection{Worst-case performance analysis}
\label{subsec:worst-case}

In worst-case, the complementary pivoting method may end up having
traversed all the full-dimensional cells of the underlying \PL{}
manifold. As each iteration of the complementary pivoting method corresponds to
the traversal of one full-dimensional cell assuming nondegeneracy or
lexicographic pivoting, the maximum number of iterations is the
total number of the full-dimensional cells, which is finite but could be
exponential in the number of constraints.
Therefore we compare worst-case performance of the two formulations by
counting the number of the full-dimensional cells of the \PL{} manifold
each formulation generates.

By construction, the number of the full-dimensional cells is equivalent to
the number of the nonempty faces of the polyhedral convex set being considered
\cite[page 6]{robinson92}. Thus we count the number of the nonempty faces of
the sets $B_1 \times B_2$ and $C$ each to compare worst-case performance.

Let $\NNF(S)$ denote the number of the nonempty faces of a polyhedral convex
set $S$.
To count the number of the nonempty faces, we start with building blocks
defining a polyhedral convex set: 
intervals $[l,u]$ in $\mathbb{R}$ and linear constraints $a^\tr z - b \in K$.
For a closed interval $[l,u]$ in $\mathbb{R}$, the
number of the nonempty faces is as follows.
\begin{equation}
  \NNF([l,u]) = \left\{
  \begin{array}{ll}
    1 & \text{if } -\infty = l < u = \infty
        \,\text{ or }\, -\infty < l=u < \infty,\\
    2 & \text{if } -\infty = l < u < \infty
        \,\text{ or }\, -\infty < l < u = \infty,\\
    3 & \text{if } -\infty < l < u < \infty.
  \end{array}
\right.
\label{eq:nnf-box}
\end{equation}

For a halfspace or a hyperplane defined by a linear constraint
$a^\tr z - b \in K$ where $a \neq 0$ and $b \in \mathbb{R}$,
the number of the nonempty faces is as follows:
\begin{equation}
  \NNF(\{z \in \mathbb{R}^n\,|\, a^\tr z - b \in K\}) = \left\{
  \begin{array}{ll}
    2 & \text{if } K = \mathbb{R}_+ \,\text{ or }\, K=\mathbb{R}_-,\\
    1 & \text{if } K = \{0\}.
  \end{array}
\right.
\label{eq:nnf-lin}
\end{equation}

Based on \eqref{eq:nnf-box} and \eqref{eq:nnf-lin}, we can compute an
upper bound on the number of the nonempty faces of a polyhedral convex set.

\begin{lemma}
Let $C$ be a polyhedral convex set defined by
$C=\{z \in \mathbb{R}^n \,|\, Az - b \in K, l \le z \le u\}$.
Then
\begin{equation}
\NNF(C) \le \Pi_{j=1}^n \NNF([l_j,u_j]) \times
\Pi_{i=1}^m \NNF(\{z\in\mathbb{R}^n \,|\,A_{i,:}^\tr z - b_i \in K_i\}),
\label{eq:nnf-bnd}
\end{equation}
\label{lemma:nnf-bnd}
where the symbol $\Pi_j$ denotes multiplication over indexed terms.
\end{lemma}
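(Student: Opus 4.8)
The plan is to reduce the inequality to a statement about products by recognizing $C$ as an intersection of the building-block sets whose face counts were just computed, and then invoking a general sub-multiplicativity principle for $\NNF$ under intersection. First I would observe that $C = \bigcap_{i=1}^m H_i \cap \bigcap_{j=1}^n I_j$, where $H_i = \{z \mid A_{i,:}^\tr z - b_i \in K_i\}$ and $I_j = \{z \mid l_j \le z_j \le u_j\}$; the right-hand side of \eqref{eq:nnf-bnd} is exactly the product of the $\NNF$ values of these $m+n$ sets. So the lemma follows immediately once I establish the auxiliary claim that for polyhedral convex sets $P$ and $R$ in $\mathbb{R}^n$ one has $\NNF(P \cap R) \le \NNF(P)\cdot\NNF(R)$, and then iterate it over the $m+n$ factors.

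To prove the auxiliary claim, I would use the standard fact (e.g.\ via the normal-cone characterization already in play in this paper, or \cite[page 6]{robinson92}) that every nonempty face $G$ of a polyhedral convex set $S$ is of the form $G = \{z \in S \mid z \in \ri G\} $ with a well-defined constant normal cone, and more usefully that the faces of $S$ are in bijection with the cells of its normal manifold $\mathcal N_S$, equivalently with the distinct values taken by the normal cone map $z \mapsto N_S(z)$. The key geometric input is that for any $z$ in $P \cap R$ we have $N_{P\cap R}(z) = N_P(z) + N_R(z)$ (valid for polyhedra, no constraint qualification needed). Hence the face of $P \cap R$ containing $z$ in its relative interior is determined by the pair consisting of the face of $P$ containing $z$ in its relative interior and the face of $R$ containing $z$ in its relative interior. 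This gives an injection from the set of nonempty faces of $P \cap R$ into the product of the set of nonempty faces of $P$ with that of $R$: a face $F$ of $P\cap R$ maps to $(F_P, F_R)$ where $F_P$ (resp.\ $F_R$) is the unique face of $P$ (resp.\ $R$) whose relative interior meets $\ri F$. Injectivity holds because $F$ is recovered as the unique face of $P\cap R$ whose relative interior meets $\ri F_P \cap \ri F_R$. Counting, $\NNF(P\cap R) \le \NNF(P)\cdot\NNF(R)$.

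Finally I would apply this bound inductively: writing $C$ as the intersection of the $n$ interval-slabs $I_j$ and the $m$ linear-constraint sets $H_i$, repeated use of the two-set inequality yields
\begin{equation*}
\NNF(C) \le \prod_{j=1}^n \NNF(I_j) \times \prod_{i=1}^m \NNF(H_i),
\end{equation*}
and since $\NNF(I_j) = \NNF([l_j,u_j])$ and $\NNF(H_i) = \NNF(\{z \mid A_{i,:}^\tr z - b_i \in K_i\})$ by \eqref{eq:nnf-box} and \eqref{eq:nnf-lin}, this is precisely \eqref{eq:nnf-bnd}. The main obstacle is the auxiliary claim: one must be careful that the ``face of $P$ through a point $z$'' is taken to be the unique face containing $z$ in its \emph{relative} interior (so that the assignment $z \mapsto$ face is well defined and constant on relative interiors), and one must justify the additivity $N_{P\cap R}(z) = N_P(z)+N_R(z)$ for polyhedra — this is where all the real content sits; the rest is bookkeeping. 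An alternative, entirely elementary route avoiding normal cones is to note that each nonempty face of $C$ is the solution set obtained by selecting, for each constraint, one of its faces (for an interval: the left endpoint, the right endpoint, or the whole interval; for a slab: the bounding hyperplane or the whole slab) and intersecting; distinct faces of $C$ arise from distinct such selections only after possible collapsing, giving the same product upper bound directly.
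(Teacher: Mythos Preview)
Your proposal is correct and follows essentially the same approach as the paper: decompose $C$ as the intersection of the $n+m$ building-block sets and bound $\NNF(C)$ by the product of their face counts. The only difference is that the paper dispatches the key step---every face of $\bigcap_i C_i$ is an intersection $\bigcap_i F_i$ with $F_i$ a face of $C_i$---by citing \cite[Corollary~4.2.15]{robinson15}, whereas you reprove the two-set version via the polyhedral normal-cone sum rule $N_{P\cap R}(z)=N_P(z)+N_R(z)$ and then iterate; your ``alternative elementary route'' at the end is exactly the cited corollary.
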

\begin{proof}
Let $C_j = \{z \in \mathbb{R}^n\,|\, z_j \in [l_j,u_j]\}$ for $j=1,\dots,n$ and
$C_{n+i}=\{z \in \mathbb{R}^n\,|\, A_{i,:}^\tr z - b_i \in K_i\}$ for
$i=1,\dots,m$. Then $C=\cap_{i=1}^{n+m} C_i$.
By \cite[Corollary 4.2.15]{robinson15}, $F$ is a face of $C$ if and only if
$F = \cap_{i=1}^{n+m} F_i$ where $F_i$ is a face of $C_i$ for $i=1,\dots,n+m$.
The result follows.
\end{proof}

In Lemma~\ref{lemma:nnf-bnd}, there could be a large gap between $\NNF(C)$
and its upper bound. The upper bound counts all the possible
combinations of the faces of each constraint regardless of their
feasibility. When $C$ has only box constraints, i.e.,
$C=\{z \in \mathbb{R}^n\,|\, l \le z \le u\}$, then equality holds in
\eqref{eq:nnf-bnd}. But, in other cases, the upper bound could be
much larger than $\NNF(C)$ as not every combination corresponds to a
nonempty face of $C$. For example, if
$C=\{z \in \mathbb{R}^2\,|\,z_1+z_2 \ge -1, -z_1+z_2 \ge -1, z_1-z_2 \ge -1,
-z_1-z_2 \ge -1, -1 \le z_1,z_2 \le 1\}$, we have $\NNF(C)=9$. However,
the upper bound is 144. It turns out that there are many infeasible
combinations, i.e., all the combinations having $z_1=-1$ and $z_2=1$.

Using Lemma~\ref{lemma:nnf-bnd}, we prove that maximum number of the
complementary pivoting iterations of the $\AVI(C,q,M)$ formulation
is smaller or equal to the one of its
$\MCP(B_1 \times B_2, \tilde{q}, \tilde{M})$ reformulation.

\begin{proposition}
Let an $\AVI(C,q,M)$ formulation and its
$\MCP(B_1\times B_2,\tilde{q},\tilde{M})$
reformulation defined in \eqref{eq:mcp-reform} be given. Then the number of
the full-dimensional cells of the \PL{} $(n+1)$-manifold
$\mathcal{M}_C$ is less than or equal to the number of the full-dimensional
cells of the \PL{} $(n+m+1)$-manifold $\mathcal{M}_{B_1 \times B_2}$.
\label{prop:plmanifold}
\end{proposition}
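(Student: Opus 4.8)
The plan is to bound $\NNF(C)$ and $\NNF(B_1 \times B_2)$ using the tools just developed and then compare. By the discussion at the start of Section~\ref{subsec:worst-case}, the number of full-dimensional cells of $\mathcal{M}_C$ equals $\NNF(C)$, and similarly the number of full-dimensional cells of $\mathcal{M}_{B_1 \times B_2}$ equals $\NNF(B_1 \times B_2)$; so it suffices to prove $\NNF(C) \le \NNF(B_1 \times B_2)$.

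First I would compute $\NNF(B_1 \times B_2)$ exactly. Since $B_1 \times B_2$ is a product set, its faces are products of faces of the factors, so $\NNF(B_1 \times B_2) = \NNF(B_1) \cdot \NNF(B_2)$. The box $B_1 = \Pi_{j=1}^n [l_j, u_j]$ again splits as a product, giving $\NNF(B_1) = \Pi_{j=1}^n \NNF([l_j,u_j])$ (this is the equality case of Lemma~\ref{lemma:nnf-bnd} noted in the text). For $B_2 = \{\lambda \in \mathbb{R}^m \mid \lambda \in K^D\}$, observe that $K$ is a Cartesian product over the $m$ constraint rows of factors $\mathbb{R}_+$, $\{0\}$, or $\mathbb{R}_-$, so $K^D$ is the product of the corresponding dual cones $\mathbb{R}_+$, $\mathbb{R}$, or $\mathbb{R}_-$. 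Hence $\NNF(B_2) = \Pi_{i=1}^m c_i$ where $c_i = 2$ if $K_i \in \{\mathbb{R}_+, \mathbb{R}_-\}$ (a ray has two faces: itself and $\{0\}$) and $c_i = 1$ if $K_i = \{0\}$ (the dual is all of $\mathbb{R}$, one face). The key point is that these $c_i$ are \emph{exactly} the values $\NNF(\{z \in \mathbb{R}^n \mid A_{i,:}^\tr z - b_i \in K_i\})$ from equation~\eqref{eq:nnf-lin}: a $\ge$ or $\le$ halfspace has two faces, an equality hyperplane has one. Therefore
\begin{equation*}
\NNF(B_1 \times B_2) = \Pi_{j=1}^n \NNF([l_j,u_j]) \times \Pi_{i=1}^m \NNF(\{z \in \mathbb{R}^n \mid A_{i,:}^\tr z - b_i \in K_i\}).
\end{equation*}

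The right-hand side is precisely the upper bound on $\NNF(C)$ furnished by Lemma~\ref{lemma:nnf-bnd}. So chaining the inequality of Lemma~\ref{lemma:nnf-bnd} with the identity just derived gives $\NNF(C) \le \NNF(B_1 \times B_2)$, and translating back through the face/cell correspondence yields the claimed inequality between the numbers of full-dimensional cells of $\mathcal{M}_C$ and $\mathcal{M}_{B_1 \times B_2}$.

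I do not anticipate a serious obstacle; the one step requiring a little care is matching the dual-cone face count $c_i$ with the halfspace/hyperplane face count in \eqref{eq:nnf-lin} — i.e., verifying that passing to $K^D$ on the $\lambda$ side and keeping the primal constraint $A_{i,:}^\tr z - b_i \in K_i$ on the $C$ side produce the same per-constraint face count (both $2$ for inequalities, both $1$ for equalities). Once that bookkeeping is pinned down, the proof is just assembling the product formula for $\NNF(B_1 \times B_2)$, invoking Lemma~\ref{lemma:nnf-bnd}, and citing \cite[page 6]{robinson92} for the faces-to-cells translation.
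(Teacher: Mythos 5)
Your argument is correct and follows the paper's proof essentially step for step: compute $\NNF(B_1\times B_2)$ exactly via the product formula, observe that the per-constraint face count for the dual-cone side matches the count in~\eqref{eq:nnf-lin}, and then invoke Lemma~\ref{lemma:nnf-bnd} to compare with $\NNF(C)$. The only cosmetic difference is that you spell out the dual-cone bookkeeping explicitly, whereas the paper phrases it through the bounds $[l^\lambda_i,u^\lambda_i]$ on the $\lambda$ variables.
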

\begin{proof}
By \cite[Proposition 4.2.12]{robinson15},
$\NNF(B_1 \times B_2)=\NNF(B_1) \times \NNF(B_2)$.
By applying the same proposition, we have
$\NNF(B_1) = \Pi_{j=1}^n \NNF([l_j,u_j])$ and
$\NNF(B_2)= \Pi_{i=1}^m \NNF([l^\lambda_i,u^\lambda_i])$ where $l^\lambda_i$ and
$u^\lambda_i$ are lower and upper bounds on $\lambda_i$ variable.
Using \eqref{eq:nnf-box} and \eqref{eq:nnf-lin}, we see that
$\Pi_{i=1}^m \NNF(\{z \in \mathbb{R}^n\,|\,A_{i\bullet}^\tr z - b_i \in K_i\})
= \Pi_{i=1}^m \NNF([l_i^\lambda,u_i^\lambda])$. By Lemma~\ref{lemma:nnf-bnd},
the result follows.
\end{proof}

Based on Proposition \ref{prop:plmanifold}, we expect that \PATHAVI{}
will have fewer iterations than \PATH{}, which solves the
\MCP{} reformulation. See computational results in
Sections~\ref{subsec:exp-friction}--\ref{subsec:exp-nash}.

\section{Computational results}\label{sec:exp}

In this section, we present computational results of \PATHAVI{} highlighting
its computational benefits of preserving the problem structure and its
robustness and efficiency compared to \PATH{} 
version 4.7~\cite{dirkse95,path47}, an established solver for \AVI{}s 
which uses the \MCP{} reformulation.
Section~\ref{subsec:exp-preserve}
compares performance of \PATHAVI{} between the original \AVI{} formulation
containing nontrivial lineality space and its equivalent reduced form that
does not contain lines.
Sections~\ref{subsec:exp-friction}--\ref{subsec:exp-nash}
compare performance of \PATHAVI{} and \PATH{} over friction contact problems,
compact sets, and Nash equilibrium problems, respectively.

All experiments were performed on a Linux machine with Intel Xeon(R)
E7-4850 2.00GHz processor and 256GB of memory. \PATHAVI{} 
was compiled using GNU gcc version 4.4.7 and its interfaces were linked to
GAMS. All problem instances were written in GAMS using the EMP syntax for
variational inequalities~\cite{ferris09}. We set the time limit to 1 hour and
iteration limit to $10^5$.

\subsection{Friction contact problem}
\label{subsec:FCP}

Coulomb or dry friction is a ubiquitous phenomenon when mechanical systems interact via contact with each other.
Let two bodies be in contact at one point with $u\coloneqq (u_n, u_t)^\tr\in\R_+\times\R^2$, the relative (or local) velocity between them.
The Coulomb friction phenomenon is described by
\begin{equation}
 \left\{\begin{array}{l@{\quad}l@{}}\text{If}\; u_t = 0&\text{then}\;r\in K_{\mu}\\
   \text{If}\; u_n = 0\;\text{and}\;u_t \neq 0&\text{then}\;r\in \bdry K_{\mu}\;\text{and}\;\exists\alpha\geq0\;\text{such that}\;r_t = -\alpha u_t,
 \end{array}\right.\label{eq:coulomb}
\end{equation}
where $r\coloneqq(r_n, r_t)^\tr $ is the contact force. The friction cone $K_{\mu} \coloneqq \{(t, x)\mid t\in\R_+, x\in\mu tD\}$, with $\mu>0$ and $D$ the unit disk in $\R^2$,
defines the admissible set for the contact force.
There is a host of approaches to computing a solution to this problem, see~\cite{acary2008numerical} for a list of them.
In the following, we use a variational approach that can be traced back to at least~\cite{jean1987dynamics}.
We reformulate~\eqref{eq:coulomb} using normal cone inclusions: $-u_n\in\NC{\R_+}{(r_n)}$ and $ -u_t\in\NC{r_n\mu D}{(r_t)}$.
We cast this problem as a second order LCP (SOLCP):
together with the relations $Mv = Hr + f$ (discretized dynamics) and $u = H^{\tr}v + w$ (transformation from the global velocity to the local one), we obtain
\begin{equation}
 0 \in \begin{pmatrix}M&-H&0\\H^\tr&0&E\\ \bar{H}^\tr&0&E\end{pmatrix} \begin{pmatrix}v\\r\\y\end{pmatrix}
 + \begin{pmatrix} -f\\w\\\bar{w} \end{pmatrix}
 + \;\NC{X}{\begin{pmatrix}v\\r\\y\end{pmatrix}}.\label{avi:fc_vi:main} 
\end{equation}
The matrix $\bar{H}^\tr$ and vector $\bar{w}$ are the same as $H^\tr$ and $w$, except that $(\bar{H}^\tr)_{i\bullet} = 0$ and $\bar{w}_i = 0$ if $i\mod3 \equiv 1$.
Similarly, $E\in\R^{3n_c\times3n_c}$ is defined as $E_{ij} = 1$ if $i = j$ and $i\mod3 \equiv 1$, otherwise $E_{ij} = 0$.
The solution of the SOLCP has to lie in $X\coloneqq\R^{n_{\mathrm{dof}}\cdot n_d}\times K\times K$, $K\coloneqq\Pi_{k=1}^{n_c} \MYK{K}$,
with $n_{\mathrm{dof}}$ the number of degree of freedom, $n_d$ the number of bodies and $n_c$ the number of contacts.
The number of degree of freedom depends on the type of system we consider, i.e. if we have rigid bodies, $n_{\mathrm{dof}} = 6$.
However, if we have deformable bodies, then this number is typically larger and depends on the modeling used.
The cone $K$ is not polyhedral. So we need to approximate $K$ to get an \AVI{} from~\eqref{avi:fc_vi:main}.
To find a solution to the SOLCP, we would have to solve a sequence of \AVI{}s until one of the solutions also satisfies~\eqref{avi:fc_vi:main} up to the tolerance.
However, we focus here on the case where it makes sense to perform a ray start. Hence, we solve the AVI{} that would correspond to the first iteration and with an anisotropic approximation of $K$.
For each contact we construct a finitely representable approximation $\MYK{D}_p$ of the disk $\MYK{\mu} D$. Then the cone $K$ is approximated by $K_p\coloneqq\Pi_k \MYK{K}_p$,
with $\MYK{K}_p\coloneqq\{(t,tx)\mid t\in\R_+, x\in \MYK{D}_p\}$.
Finally, with a slight abuse of notation, we redefine $X\coloneqq \R^{n_{\mathrm{dof}} \cdot n_d}\times K_p\times K_p$ to refer to~\eqref{avi:fc_vi:main} as an $\AVI{}$.
It can be verified that \PATHAVI{} processes the AVI~\eqref{avi:fc_vi:main} if $ w\in\DUAL{(\ker H\cap K)}$ by applying Lemma~\ref{lem:pathAVIext_Lmat}.
It is noteworthy that this condition is exactly the one given in~\cite{klarbring1998existence} for the existence of solution to the SOLCP~\eqref{avi:fc_vi:main}.
If we solely rely on the $\Lmat$ property, we need to assume that $\ker H = \{0\}$, which fails in many instances, for example when a 4-legged chair is in contact with a flat ground.

\subsection{Computational benefits of preserving the problem structure}
\label{subsec:exp-preserve}
The problem data for the following numerical results were obtained from simulations of deformable bodies with the LMGC90~\cite{dubois:hal-00596875} software and using a
solver from \textsc{Siconos}~\cite{acary2007introduction}. In the following, we focus on a simple example where 2 deformable cubes are on top of another.
During the simulation, the number of contacts varies between 80 and 120. The shape of $M$ and $H$ is given in Fig.~\ref{fig:nz-MHW}.
\begin{figure}[t]
  \centering
  \subfloat[Nonzero patterns of $M$]{
    \includegraphics{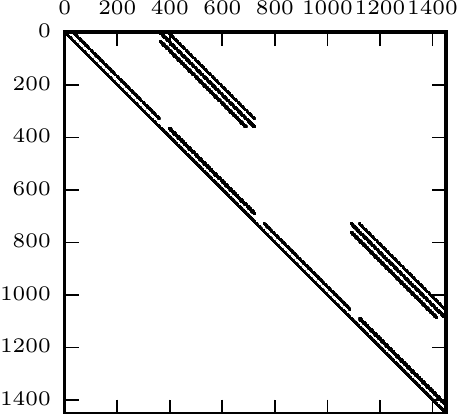}
    \label{fig:nz-M2}
  }
  \subfloat[Nonzero patterns of $H$]{
    \includegraphics{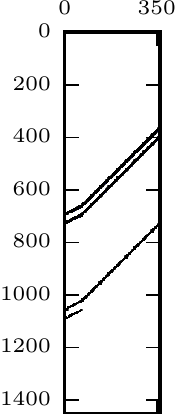}
    \label{fig:nz-H}
  }
  \subfloat[Nonzero patterns of $W$]{
   \includegraphics{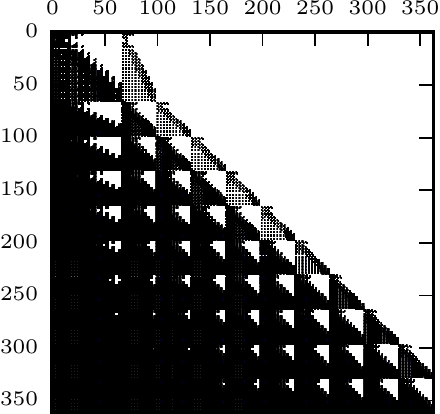}
    \label{fig:nz-W}
  }
  \caption{Nonzero patterns of the matrices $M$ (size: $1452\times 1452$, nnz: $11330$), $H$ (size: $1452\times 363$, nnz: $1747$) and $W\coloneqq H^TM^{-1}H$ (size: $363\times 363$, nnz:  $56770$).}
\label{fig:nz-MHW}
\end{figure}
It is noteworthy that if we have to remove the lineality space, that is compute $W$, then all the structure of the problem is destroyed
(see Fig. \ref{fig:nz-W}):
if we remove the lineality space, the number of nonzero elements increases by a factor of $5$!
It is expected that the linear algebra computations will be more expensive in the reduced space than in the original one because of this large increase of
nonzero entries.
This has been verified on instances that have the same kind of structure as the matrices depicted in Fig.~\ref{fig:nz-MHW}.
\begin{figure}[htb]
 \centering
 \includegraphics{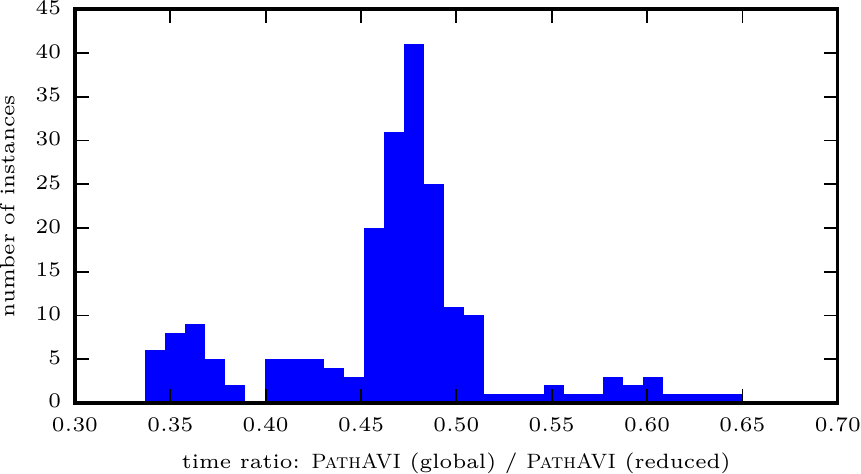}
 \caption{Comparison in terms of speed between the resolution in the original space and the reduced one. The number of iteration was the same for the 209 instances.}
 \label{fig:reduced_is_bad}
\end{figure}
As shown on Fig.~\ref{fig:reduced_is_bad}, \PATHAVI{} working in the original space is always faster and most of the time is at least twice as fast as \PATHAVI{} working in the reduced space.
The time in the reduced space does not take into account the transformation of the problem data (computation of the $W$ matrix).

\subsection{Multibody friction contact problems}
\label{subsec:exp-friction}

When the bodies are rigid, it is common in the contact mechanic community to eliminate the velocity $v$.
The problem is formulated in a reduced space $K_p\times K_p$ (defined in
Section~\ref{subsec:FCP}) and the \AVI{} is
\begin{equation}
 0\in \begin{pmatrix}W&E\\\bar{W}&E\end{pmatrix} \begin{pmatrix} r\\y \end{pmatrix} + \begin{pmatrix}
  \omega \\ \bar{\omega}
 \end{pmatrix} + \NC{K_p\times K_p}{\begin{pmatrix} r\\y \end{pmatrix}},
 \label{eq:fc_condensed}
\end{equation}
where $W\coloneqq H^{\tr}M^{-1}H$ and $\bar{W}\coloneqq \bar{H}^{\tr}M^{-1}H$, $\omega\coloneqq w + H^{\tr}M^{-1} f$ and $\bar{\omega}\coloneqq \bar{w} + \bar{H}^{\tr}M^{-1} f$.
The lineality space is then trivial in this formulation.

We present computational results using the problem data ($W$, $\mu$ and $q$) from the FCLIB collection\footnote{The collection of problem can be freely downloaded by visiting \url{http://fclib.gforge.inria.fr}}~\cite{acary:hal-00945820},
which aims at providing challenging instances of the friction contact problem. Since we know that \PATHAVI{} can find a solution to any of these examples, the tolerance is set to a low value: $\sqrt{N}\cdot 10^{-9}$, where $N$
is the number of contacts. This value is lower than the default tolerance of \PATH{} (that is already considered quite demanding).
First a few observations: \PATH{} fails to perform a ray start on all the examples, due to the fact that $\ker W$ is nontrivial. 
The results are summarized in Table~\ref{table:FCLIB} and show that \PATHAVI{} is more robust than \PATH{}. 
\begin{table}[htb]
\centering
\caption{Statistics for 4579 friction contact problems of the form~\protect\eqref{eq:fc_condensed}.}
\begin{tabular}{|l|r|r|r|r|r|}
 \hline
 \multirow{2}{*}{Solver/profile} & \multirow{2}{*}{\# Failed} & 
                             \multicolumn{4}{|c|}{Failure type} \\\cline{3-6}
                  &      & Solver error & Stalled & Time & Iteration \\\hline
pathavi/UMFPACK   &    0 &            0 &       0 &    0 &         0 \\
pathavi/default   &   17 &            0 &       0 &    0 &        17 \\
pathavi/LUSOL-blu &    4 &            0 &       0 &    0 &         4 \\
path/default      & 2060 &          535 &    1525 &    0 &         0 \\
path/no crash     &  108 &          101 &       0 &    6 &         1 \\
\hline
\end{tabular}
\label{table:FCLIB}
\end{table}
\PATHAVI{} with the linear algebra package UMFPACK (``pathavi/UMFPACK'') solves all instances, and changing the linear algebra routines to LUSOL (``pathavi/default'')
leads to a small number of failures. This number can be reduced by using the block-LU updates~\cite{eldersveld92} (``pathavi/LUSOL-blu'').
The default behavior of \PATH{} (``path/default'') leads to many failures: the crash method is inappropriate for such models.
However, even without the crash procedure (``path/no crash''), \PATH{} still fails on more instances than \PATHAVI{}.
The different failure types have the following meaning: ``Solver error'' means that the first basis matrix could not be factorized, despite the use of artificial variables to
overcome the rank deficiency. ``Stalled'' means that a solver possibly tried various strategies but failed to generate solution of the requested accuracy and consequently gave up.
Note that this never occurred with \PATHAVI{} on this set of problems.
``Time'' (or ``Iteration'') signals that the time (or iteration) limit has been reached.
\begin{figure}[t]
  \centering
  \subfloat[Number of iterations]{
    \includegraphics[scale=.75]{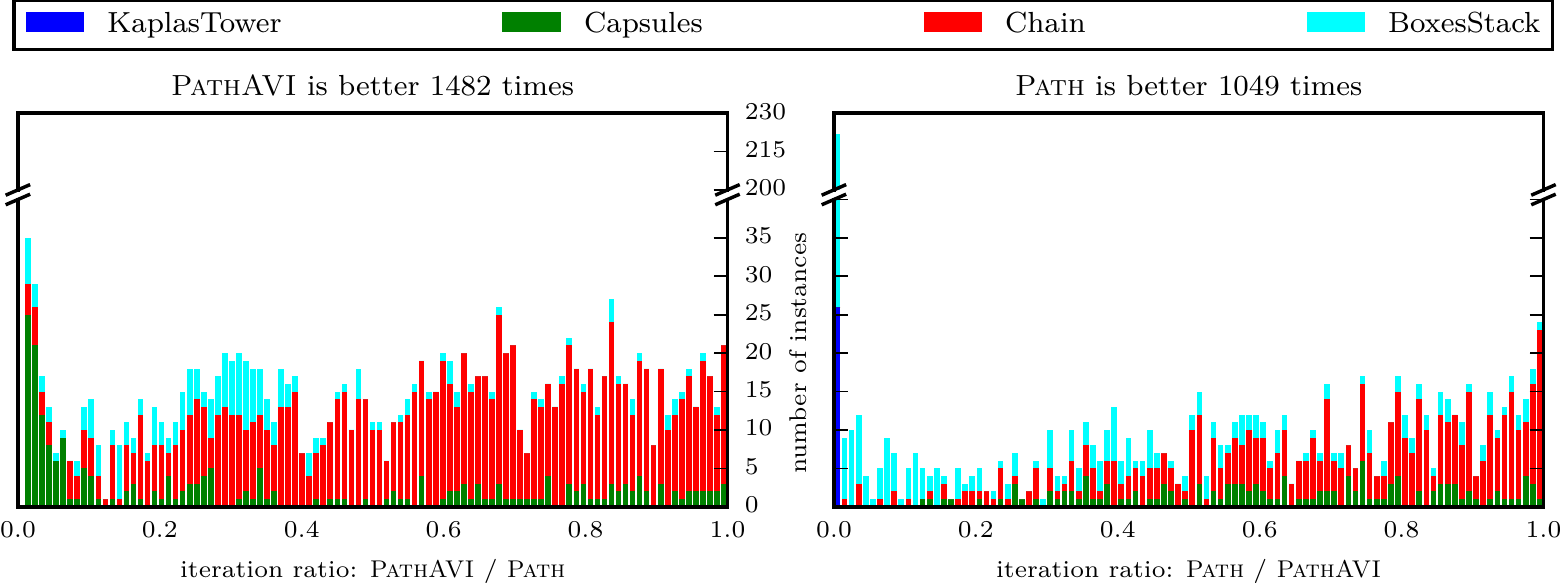}
    \label{fig:path_default-vs-pathavi--miter}
  }\\[.2mm]
  \subfloat[Time]{
    \includegraphics[scale=.75]{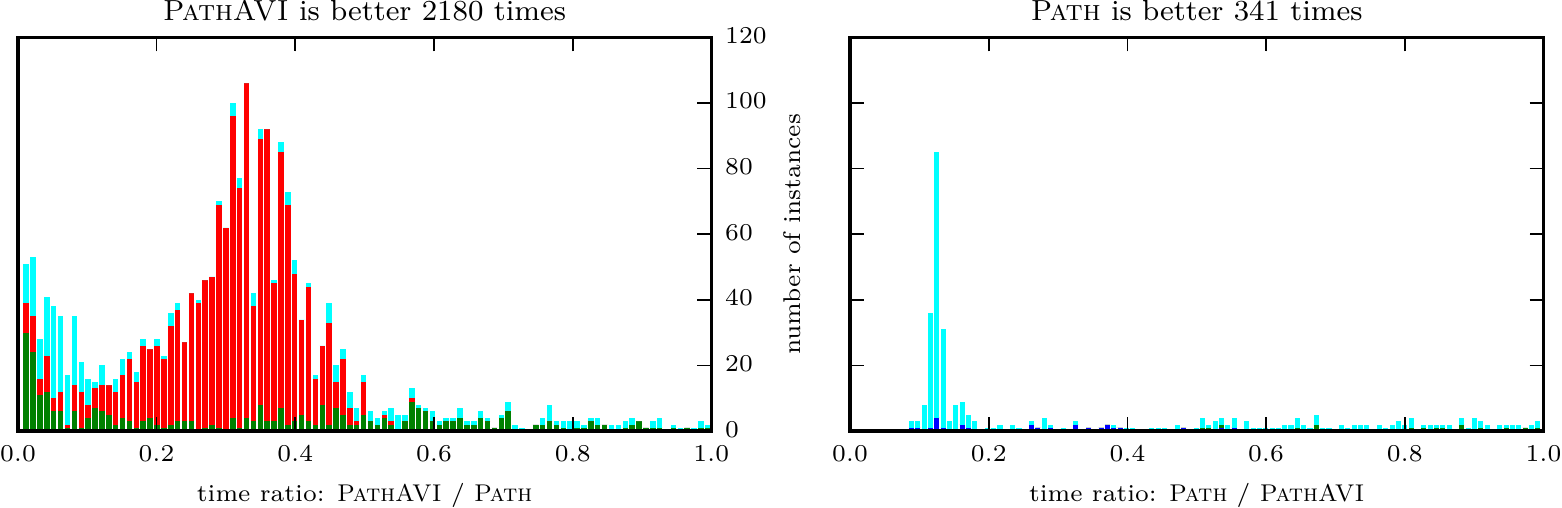}
    \label{fig:path_default-vs-pathavi--time}
  }
  \caption{Comparison between \PATH{} and \PATHAVI{}}
\label{fig:path_default-vs-pathavi}
\end{figure}
Due to space constraints, we further compare only \PATH{} and \PATHAVI{} with their default settings.
First the performance of both solvers in terms of number of iterations is displayed on Fig.~\ref{fig:path_default-vs-pathavi--miter}.
The left plot represents the ratio of the number of iterations when \PATHAVI{} has a fewer number. For the right plot, it is when \PATH{}
solved with a fewer number. Overall, \PATHAVI{} is better than \PATH{}. The spike on the right plot, when \PATH{} finds the solution with
a small number of iterations compared to \PATHAVI{}, is explained by the fact that the crash procedure performed well in those cases.
However, as it can be seen in Fig.~\ref{fig:path_default-vs-pathavi--time}, this does not imply that \PATH{} is faster, since those crash iterations
can be expensive. If speed is the metric, then \PATH{} is better in less than 10\% of all the instances. The previous spike
illustrates that the crash method can find a very good starting point in some instances. This feature of a Newton-based method, which either finds a solution quickly or fails, has already been witnessed when solving friction contact problems.
Finally note that when \PATHAVI{} is the faster solver, it usually finds a solution in less than half the time of \PATH{}.


\subsection{\AVI{}s over compact sets}
\label{subsec:exp-compact}

One strong implication of Theorem~\ref{thm:class} is that when $C$ is compact
(so that $\rec C=\{0\}$)
\PATHAVI{} can process an $\AVI(C,q,M)$ with arbitrary $M$ and $q$. 
In contrast, this does not hold for the \MCP{} reformulation
as the underlying feasible region of it
may not be compact although $C$ is compact. This is because whenever the
\AVI{} contains polyhedral constraints the associated $\lambda$ variables
in the \MCP{} reformulation are unbounded.

We construct 5 \AVI{} instances by taking compact feasible regions from
\cite{maros98} having finite lower and upper bounds and by randomly generating
$M$ and $q$ such that the resultant \AVI{} has an $M$ with negative eigenvalues.

Table~\ref{tbl:exp-compact} presents some computational results. As expected,
\PATHAVI{} is able to solve all the instances, whereas \PATH{} fails to solve
three of them. Also on the two problem instances where both solvers are
able to solve, \PATHAVI{} shows 10--30 times fewer iterations, and
a similarly decreased elapsed time. These properties hold for a wide selection
of instances and the above table is just provided for expository purposes.

\begin{table}[t]
\centering
\caption{Performance of \PATHAVI{} and \PATH{} over compact sets}
\begin{tabular}{|r|r|r|r|r|r|r|}
\hline
\multicolumn{1}{|c|}{\multirow{2}{*}{Name}} & 
\multicolumn{1}{|c|}{\multirow{2}{*}{(\#constrs,\#vars)}} & 
\multicolumn{1}{|c|}{\multirow{2}{*}{(nnz(A),nnz(M))}} & 
\multicolumn{2}{c|}{Number of iterations} & 
\multicolumn{2}{c|}{Elapsed time (secs)} \\\cline{4-7}
 & & & \multicolumn{1}{c|}{\PATHAVI{}} & \multicolumn{1}{c|}{\PATH{}} 
     & \multicolumn{1}{c|}{\PATHAVI{}} & \multicolumn{1}{c|}{\PATH} \\\hline
CVXQP1\_M &  (500, 1000) &  (2495,  999) &  3119 & fail &   0.459 &    fail\\
CVXQP2\_M &  (250, 1000) &  (1746,  999) & 33835 & fail &   2.827 &    fail\\
CVXQP3\_M &  (750, 1000) &  (3244,  999) &   360 & 3603 &   0.105 &   1.992\\
CONT-050  & (2401, 2597) & (14597, 6407) &    11 &  382 &   2.753 & 272.429\\
CONT-100  & (9801,10197) & (59197,98875) &     3 & fail & 174.267 &    fail\\
\hline
\end{tabular}
\label{tbl:exp-compact}
\end{table}

\subsection{Nash equilibrium problems}
\label{subsec:exp-nash}

Another application of \AVI{}s is to Nash equilibrium problems.
In a Nash equilibrium problem, there are multiple agents each of which
minimizing its own objective function, and each agent's objective function
not only depends on the agent's decision but also other agents' decisions. 
For example, a typical Nash equilibrium problem computes a solution
satisfying
\begin{equation}
x_i^* \in \argmin_{x_i \in X_i}\, h_i(x_i,x^*_{-i}), \quad 
\text{for } i=1,\dots,N.
\tag{NEP}
\label{eq:nep}
\end{equation}
where we note that each $i$th agent's objective function $h_i$ takes
its own decision, denoted by $x_i$, and other agents' decisions, denoted
by $x_{-i}$.

We generated 6 instances of Nash equilibrium problems, where
each $X_i$ is a polyhedral convex set and $h_i$ is continuously 
differentiable in $x$ and convex quadratic in $x_i$ for each fixed $x_{-i}$.
Specifically, $h_i$ takes the following form:
\begin{equation*}
h_i(x_i,x_{-i}) = \frac{1}{2}x_i^\tr Q_i x_i + x_i^\tr Q_{-i}x_{-i}
+ c_i^\tr x_i + d_i^\tr x_{-i}.
\end{equation*}
where $Q_i$ is symmetric positive definite.

In this case, $x$ is a solution to \eqref{eq:nep}
if and only if it is a solution to the $\AVI(C,q,M)$ where
$Mx+q = (\nabla_{x_i} h_i(x))_{i=1}^N$ and $C=\Pi_{i=1}^N X_i$.
The number of agents ranges from 10 to 300. 

\begin{table}[t]
\centering
\caption{Performance of \PATHAVI{} and \PATH{} over the NEPs}
\subfloat[Statistics of the NEPs]{
  \begin{tabular}{|r|r|r|}
    \hline
    \multicolumn{1}{|c|}{Name} & 
    \multicolumn{1}{c|}{(\#constrs,\#vars)} & 
    \multicolumn{1}{c|}{(nnz(A),nnz(M))} \\\hline
    vimod1 & ( 554,1138) & (4744,22577) \\
    vimod2 & ( 910,1723) & (7935,46137) \\
    vimod3 & (1101,2226) & (9117,67634) \\
    vimod4 & ( 870,1828) & (62056,154332) \\
    vimod5 & (1327,2586) & (133527,274004) \\
    vimod6 & (2210,4359) & (207408,417810) \\
    \hline
\end{tabular}
}\\
\subfloat[\# Iterations and elapsed time of \PATHAVI{} and \PATH{} on the NEPs]{
  \begin{tabular}{|r|r|r|r|r|r|r|}
    \hline
    \multicolumn{1}{|c|}{\multirow{3}{*}{Name}} &
    \multicolumn{3}{c|}{Number of iterations} & 
    \multicolumn{3}{c|}{Elapsed time (secs)} \\\cline{2-7}
    & \multicolumn{1}{|c|}{\multirow{2}{*}{\PATHAVI{}}} 
    & \multicolumn{1}{c|}{\multirow{2}{*}{\PATH{}}} 
    & \multicolumn{1}{c|}{\PATHAVI{}/} 
    & \multicolumn{1}{c|}{\multirow{2}{*}{\PATHAVI{}}}
    & \multicolumn{1}{c|}{\multirow{2}{*}{\PATH{}}} 
    & \multicolumn{1}{c|}{\PATHAVI{}/} \\
    & & & \multicolumn{1}{c|}{UMFPACK}
    & & & \multicolumn{1}{c|}{UMFPACK} \\\hline
    vimod1 &  367 & 2087 &  367 &    0.372 &    4.129 &   0.437\\
    vimod2 &  319 & 3570 &  319 &    1.098 &   24.134 &   0.645\\
    vimod3 &  590 & 4278 &  590 &    3.208 &   60.553 &   1.639\\
    vimod4 & 1343 & 6146 & 1343 &  127.194 &   66.427 &  18.319\\
    vimod5 & 2167 & 2768 & 2167 &  327.970 &  325.558 &  40.285\\
    vimod6 & 3522 & 4222 & 3522 & 2341.193 & 1841.642 & 109.960\\\hline
  \end{tabular}
}
\label{tbl:exp-nep}
\end{table}

Table~\ref{tbl:exp-nep} presents performance of \PATHAVI{} and \PATH{} over
the NEPs.
The number of iterations of \PATHAVI{} is up to 11 times fewer than \PATH{}.
Elapsed time shows similar results except for the last three instances.
In those instances, LUSOL has a great difficulty in computing
\PATHAVI{}'s intermediate basis matrices. If we change the linear algebra
engine to UMFPACK, the computation time significantly reduces. 
Regarding \PATH{}'s performance on the last three instances, we would like to
point out that the proximal perturbation technique of \PATH{},
which solves a sequence of perturbed \MCP{}s by adding positive diagonal
elements $\epsilon_k I$ with $\epsilon_k \rightarrow 0$ as
$k \rightarrow \infty$ to the matrix $\tilde{M}$ in \eqref{eq:mcp-reform},
plays a significant role in its performance. 
Adding positive diagonals changes the elimination sequence and makes
linear algebra computations much faster and more stable. 
When we turn off the proximal perturbation, \PATH{} either gets much slower 
than \PATHAVI{} or fails to solve the instance. 

\section{Conclusions}\label{sec:conclusion}

We have presented \PATHAVI{}, a structure-preserving pivotal method for
affine variational inequalities. Compared to existing methods,
\PATHAVI{} can process an \AVI{} without applying any reduction or
transformation to the problem data even if the underlying feasible region
contains lines.
\PATHAVI{} can process some newly generated problem classes from
applications in friction contact as well as the existing
problem class ($L$-matrices \cite{cao96}).
A computational method for finding a point satisfying sufficient conditions
for a ray start is detailed.
Through worst-case analysis, we have shown that exploiting polyhedral
structure for solving affine variational inequalities is expected to show
better performance than using a mixed complementarity problem reformulation.
Computational results over friction contact and Nash equilibrium
problems demonstrate that \PATHAVI{} compares favorably with \PATH{} in terms of
robustness and efficiency.




\section*{Acknowledgements}
This work is supported in part by the Air Force Office of Scientific Research and the Department of Energy. The authors are grateful to Steven Dirkse and Todd Munson for comments and suggestions leading to improved computational performance.

\newpage
\bibliographystyle{siam}       
\bibliography{pathavi}   

\section*{Appendix}
\label{sec:appendix}

\begin{lemma}[Theorem~4.4~\cite{cao96}]\label{lem:aux_var_cst}
 Consider an $\AVI(C,q,M)$ and let $M$ be semimonotone with respect to $\rec C$ and invertible on the lineality space of $C$.
 Suppose that an unbounded ray occurs.
 Then the value of the auxiliary variable $t$ is constant on that ray and $\Delta z$, the variation in $z$ is nonzero and satisfies
 \begin{equation}
  \Delta z \in \rec C,\qquad M\Delta z \in \DUAL{(\rec C)}, \qquad\text{and}\; \Delta z^\tr M\Delta z = 0.\label{eq:delta_z_ray}
 \end{equation}
\end{lemma}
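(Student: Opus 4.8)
The plan is to reduce to the case $\lin C=\{0\}$ and then to dissect the secondary ray on the single $(n+1)$-cell of $\mathcal{M}_C$ that contains it. For the reduction, assume $M$ is invertible on $\lin C$ and form the reduced problem $\AVI(\tilde{C},\tilde{q},\tilde{M})$ of Proposition~\ref{prop:gpath}; as in \cite{cao96}, the stage~1 reduction preserves semimonotonicity, so $\tilde{M}$ is semimonotone with respect to $\rec\tilde{C}$ while $\lin\tilde{C}=\{0\}$. By Propositions~\ref{prop:gpath}--\ref{prop:gpath-nolin} a secondary ray of $G_C^{-1}(0)$ in direction $(\Delta x,\Delta t)$ maps (under lexicographic pivoting) to a secondary ray of $\tilde{G}_{\tilde{C}}^{-1}(0)$ with the same $\Delta t$ and with $\Delta\tilde{z}=\bar Q^\tr\Delta z\neq0$; granting the lemma for $\tilde{C}$ gives $\Delta t=0$, whence $M\Delta z=-\Delta v$ with $\Delta v\in(\lin C)^\perp$, so $\bar Q^\tr M\Delta z=\tilde{M}\,\bar Q^\tr\Delta z$, and using $\rec C=\lin C\oplus\big(\rec C\cap(\lin C)^\perp\big)$ (with $\bar Q^\tr$ an isometry of $\rec C\cap(\lin C)^\perp$ onto $\rec\tilde{C}$) the relations~\eqref{eq:delta_z_ray} for $\Delta z$ become those for $\bar Q^\tr\Delta z$ and $\tilde{M}$, while $\bar Q^\tr\Delta z\neq0$ yields $\Delta z\neq0$. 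So it suffices to treat $\lin C=\{0\}$; then $\rec C$ is pointed, $\DUAL{(\rec C)}$ is full-dimensional, and the ray start is at a genuine extreme point $z^0$, so $N_C(z^0)$ is full-dimensional.

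Assume the secondary ray is generated at $(x^k,t^k)$, $k>0$, and lies in $\sigma\times\mathbb{R}_+$ with $\sigma=F+N_F$ a cell of $\mathcal{N}_C$. On $\sigma$ both $\pi_C$ and $x\mapsto x-\pi_C(x)$ are affine and $\pi_C(\sigma)\subseteq F$; writing $z(\theta)=\pi_C(x^k+\theta\Delta x)=z^k+\theta\Delta z$ and $v(\theta)=x^k+\theta\Delta x-z(\theta)=v^k+\theta\Delta v\in N_F$ (so $\Delta v=\Delta x-\Delta z$), I extract: (i) $z(\theta)\in F\subseteq C$ for all $\theta\ge0$, hence $\Delta z\in\rec F\subseteq\rec C$; (ii) $-v(\theta)\in\DUAL{(\rec C)}$, hence $-\Delta v\in\DUAL{(\rec C)}$; (iii) testing $\langle v(\theta),z(\theta')-z(\theta)\rangle\le0$ with $\theta'$ on either side of $\theta$ forces $\langle v(\theta),\Delta z\rangle=0$ for all $\theta\ge0$, so $\langle\Delta z,\Delta v\rangle=0$; and (iv) differentiating $Mz(\theta)+q+v(\theta)-(t^k+\theta\Delta t)r=0$ gives $M\Delta z+\Delta v=\Delta t\,r$. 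Also $\Delta t\ge0$, since $\Delta t<0$ would drive $t^k+\theta\Delta t$ to zero at a finite $\theta$, terminating with a solution rather than along a ray.

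The key step is $\Delta t=0$. Suppose $\Delta t>0$. Since $r\in\ri N_C(z^0)$, $N_C(z^0)$ is full-dimensional, and $N_C(z^0)\subseteq-\DUAL{(\rec C)}$, we get $-\Delta t\,r\in\intS\DUAL{(\rec C)}=\ri\DUAL{(\rec C)}$. By (i)--(iv), $z=\Delta z$ then solves~\eqref{eq:Msemimonotone} with $q=-\Delta t\,r$: $\Delta z\in\rec C$, $M\Delta z+(-\Delta t\,r)=-\Delta v\in\DUAL{(\rec C)}$, and $\langle\Delta z,M\Delta z-\Delta t\,r\rangle=-\langle\Delta z,\Delta v\rangle=0$. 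Semimonotonicity forces $\Delta z\in\lin(\rec C)=\{0\}$, so by (iv) $\Delta x=\Delta v=\Delta t\,r$; thus the ray has direction $\Delta t\,(r,1)$, $z(\theta)\equiv z^k$, and $v^k+\theta\Delta t\,r\in N_F$ for all $\theta\ge0$, forcing $r\in\rec N_F=N_F$. But $r\in\intS N_C(z^0)$, and $N_C(z^0)$ and $N_F$ are cones of the complete normal fan of $C$; their intersection, a face of $N_C(z^0)$ meeting $\intS N_C(z^0)$, equals $N_C(z^0)$, so by maximality $N_F=N_C(z^0)$, hence $F=\{z^0\}$ and $\sigma\times\mathbb{R}_+$ is the starting cell --- contradicting that the pivot path (with $k>0$) never revisits a cell. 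Therefore $\Delta t=0$.

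With $\Delta t=0$, (iv) reads $M\Delta z=-\Delta v\in\DUAL{(\rec C)}$ and $\langle\Delta z,M\Delta z\rangle=-\langle\Delta z,\Delta v\rangle=0$, which with $\Delta z\in\rec C$ is exactly~\eqref{eq:delta_z_ray}; and $\Delta z\neq0$, since $\Delta z=0$ would give $\Delta v=0$, hence $\Delta x=0$ and the degenerate direction $(0,0)$. Undoing the reduction carries $\Delta t=0$, $\Delta z\neq0$, and~\eqref{eq:delta_z_ray} to general $C$. The step I expect to be the main obstacle is $\Delta t=0$, i.e.\ excluding a secondary ray parallel to the starting ray: this needs semimonotonicity (to kill $\Delta z$) combined with the non-recurrence of cells along the pivot path and the local structure of the normal fan at the extreme point $z^0$.
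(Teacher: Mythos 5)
Your proof is correct, but it takes a substantially more self-contained route than the paper. The paper's own argument is essentially a citation: the claims that $\Delta t=0$ and that $\Delta z$ satisfies~\eqref{eq:delta_z_ray} are attributed outright to the first part of the proof of Theorem~4.4 in~\cite{cao96}, and only the final claim $\Delta z\neq 0$ is argued directly (by noting that $\Delta z=0$ together with $\Delta t=0$ would force $\Delta x=0$, which is exactly the argument you give at the end). You instead reprove the deferred part from scratch: after the reduction to $\lin C=\{0\}$ via Propositions~\ref{prop:gpath}--\ref{prop:gpath-nolin}, you extract the standard ray relations on a single cell $\sigma=F+N_F$, rule out $\Delta t<0$ trivially, and rule out $\Delta t>0$ by a nice geometric argument: semimonotonicity collapses $\Delta z$ to zero, whence the ray direction is $\Delta t(r,1)$ and $r\in N_F$, which by the normal-fan structure forces $N_F=N_C(z^0)$, i.e.\ the secondary ray would live in the starting cell, contradicting the lexicographic non-recurrence of cells stated in Section~\ref{sec:background}. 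That argument is clean, uses only facts present in the paper, and yields a fully self-contained proof where the paper delegates to the reference; the trade-off is that the reduction back to the case $\lin C\neq\{0\}$ (tracking $\bar Q^\tr M\Delta z=\tilde M\bar Q^\tr\Delta z$ and the decomposition $\rec C=\lin C\oplus(\rec C\cap(\lin C)^\perp)$) has to be carried out explicitly, which you do correctly. One small wording nit: in the reduction paragraph the phrase ``with $\Delta\tilde z=\bar Q^\tr\Delta z\neq0$'' reads as though the nonvanishing were supplied by Propositions~\ref{prop:gpath}--\ref{prop:gpath-nolin}, whereas those give $\Delta\tilde x\neq0$; the nonvanishing of $\Delta\tilde z$ is a conclusion of the lemma in the reduced space, as you use it a sentence later.
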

\begin{proof}
 The fact that $\Delta t = 0$ and that $\Delta z$ is a solution to~\eqref{eq:delta_z_ray} follows from the first part of the proof of Theorem~4.4 in~\cite{cao96}.
 To see that the direction $\Delta \reds{z}$ is nonzero, we proceed by contradiction: at the current iterate $(\MYK{x}, \MYK{t})$ we have
\begin{align}
 G_{\reds{C}}(\MYK{x}, \MYK{t}) &= \reds{M}\MYK{z} + \reds{q} + \MYK{x} - \MYK{z} - \MYK{t} r = 0.\label{eq:fc_vi:Gk}
 \intertext{Let $\MYKp{x}$ belong to the unbounded ray and suppose that $\Delta z = 0$:}
 G_{\reds{C}}(\MYKp{x}, \MYK{t}) &=  \reds{M}\MYK{z} + \reds{q} + \MYKp{x} - \MYK{z} - \MYK{t} r = 0.\label{eq:fc_vi:Gkp1}
\end{align}
It immediately follows that $\MYKp{x} = \MYK{x}$.
\end{proof}

%
%

\begin{table}[b]
\centering
\caption{Index sets and a basis matrix describing a basic solution $z$ of an LP
  problem. Assume that $z \in \mathbb{R}^n, A \in \mathbb{R}^{m \times n}$, and
  $b \in \mathbb{R}^m$}
\label{tbl:basis}
\begin{tabular}{l}
  $B \cup N_l \cup N_u \cup N_{fr} = \{1,\dots,n\}$ and
  $B, N_l, N_u, \text{ and } N_{fr} \text{ are mutually exclusive.}$\\
  $B := \text{ a set of basic variables indices}$\\
  $N_l := \text{ a set of nonbasic variables indices at their finite lower bounds}$\\
  $N_u := \text{ a set of nonbasic variables indices at their finite upper bounds}$\\
  $N_{fr} := \text{ a set of nonbasic free variables indices}$\\
  $\mathcal{A} \cup \bar{\mathcal{A}} = \{1,\dots,m\} \quad \text{with}
  \quad \mathcal{A} \cap \bar{\mathcal{A}} = \varnothing$\\
  $\mathcal{A} := \text{ a set of active constraints indices, i.e., }
  A_{\mathcal{A}\bullet}z=b_{\mathcal{A}}$\\
  $\bar{\mathcal{A}} := \text{ a set of inactive constraints indices}$\\
  $\mathbf{B} =
  \begin{bmatrix}
    A_{\mathcal{A}B} & 0\\
    A_{\bar{\mathcal{A}}B} & \pm I_{\bar{\mathcal{A}}}
  \end{bmatrix} \text{ is an invertible basis matrix where }
  I_{\bar{\mathcal{A}}} \text{ is an identity matrix of size }
  |\bar{\mathcal{A}}| \times |\bar{\mathcal{A}}|$\\
  $z_B = A_{\mathcal{A}B}^{-1}\left(b_{\mathcal{A}}-A_{\mathcal{A}N}z_N\right),
  \, z_l = l_{N_l},\,z_u = u_{N_u},\, z_{N_{fr}} = 0,\,N=N_l\cup N_u\cup N_{fr}$
\end{tabular}
\end{table}

\begin{algorithm}[b]
\caption{Pivoting to make as many nonbasic free variables as basic variables}
\begin{algorithmic}[1]
\Require a basic feasible solution $z^0$ and its index sets 
$(B^0,N^0_l,N^0_u,N^0_{fr},\mathcal{A}^0,\bar{\mathcal{A}}^0)$
\Ensure a basic feasible solution $\bar{z}^0$ and its index sets 
$(B,N_l,N_u,N_{fr},\mathcal{A},\bar{\mathcal{A}})$
\State Set $\bar{z}^0 \leftarrow z^0$.
\State Set $(B,N_l,N_u,N_{fr},\mathcal{A},\bar{\mathcal{A}}) \leftarrow 
(B^0,N^0_l,N^0_u,N^0_{fr},\mathcal{A}^0,\bar{\mathcal{A}}^0)$.
\State Set $\texttt{changed} \leftarrow \texttt{true}$.
\While{\texttt{changed} is \texttt{true}}
\State Set $\texttt{changed} \leftarrow \texttt{false}$.
\For{each $j \in N_{fr}$}
\State Do a ratio test on the nonbasic column $j$ over basic variables that
are not free variables.
\If{the ratio is finite}
\State Pivot in the $j$th column into basis.
\State Update $\bar{z}^0$ and its index sets $(B,N_l,N_u,N_{fr},\mathcal{A},\bar{\mathcal{A}})$.\Comment{$|N_{fr}| \leftarrow |N_{fr}|-1$}
\State Set $\texttt{changed} \leftarrow \texttt{true}$.
\EndIf
\EndFor
\EndWhile
\Return $\bar{z}^0$ and its index sets $(B,N_l,N_u,N_{fr},\mathcal{A},\bar{\mathcal{A}})$
\end{algorithmic}
\label{alg:pivoting}
\end{algorithm}

\end{document}